\documentclass[a4paper,11pt,         twoside
               ]{article}

\usepackage[top=1in, bottom=1in, left=1.0in, right=1.0in]{geometry}
\usepackage{indentfirst}
\usepackage{xcolor}
\usepackage{subcaption}
\usepackage{appendix}
\usepackage{soul}
\usepackage{amsmath}
\allowdisplaybreaks

\usepackage[perpage,symbol]{footmisc}
\usepackage[sf]{titlesec}            
\usepackage{comment}
\date{}
\usepackage{titletoc}                
\usepackage{fancyhdr}                
\usepackage{type1cm}                 
\usepackage{indentfirst}             
\usepackage{makeidx}                 
\usepackage{textcomp}                
\usepackage{layouts}                 
\usepackage{bbding}                  
\usepackage{cite}                    
\usepackage{color,xcolor}            
\usepackage{listings}                


\usepackage{times}     
\usepackage{palatino} 
\usepackage{newcent}  
\usepackage{bookman}  
\usepackage{latexsym}
\usepackage{amsmath}                 
\usepackage{amssymb}                 
\usepackage{amsbsy}
\usepackage{amsthm}
\usepackage{amsfonts}
\usepackage{mathrsfs}                
\usepackage{bm}                      
\usepackage{relsize}                 
\usepackage{booktabs}
\usepackage{float}
\usepackage{epsfig}                  
\usepackage{hyperref}
\newtheorem{theorem}{Theorem}[section]
\newtheorem{lemma}{Lemma}[section]

\newtheorem{remark}{Remark}[section]
\usepackage[utf8]{inputenc}
\allowdisplaybreaks


\setlength{\floatsep}{10pt plus 3pt minus 2pt}       
\setlength{\abovecaptionskip}{2pt plus 1pt minus 1pt}
\setlength{\belowcaptionskip}{3pt plus 1pt minus 2pt}
\usepackage{graphicx}
\DeclareCaptionSubType*[Alph]{figure}
\captionsetup[subfigure]{labelformat=simple}

\makeindex                           
\fancyhf{}                           


\begin{document}

\numberwithin{equation}{section}
\newtheorem{thm}{Theorem}[section]
 \newtheorem{cor}[thm]{Corollary}
 \newtheorem{lem}[thm]{Lemma}
 \newtheorem{prop}[thm]{Proposition}
 \theoremstyle{definition}
 \newtheorem{defn}[thm]{Definition}
 \theoremstyle{remark}
 \newtheorem{rem}[thm]{Remark}
 \newtheorem{cla}[thm]{\bf Claim}
 \newtheorem{example}[theorem]{Example}

\def\Re{\mathbb{R}}
\def\Ze{\mathbb{Z}}
\renewcommand{\arraystretch}{1.4}
\def\minmod{\mbox{minmod}}
\def\f#1#2{\frac {#1}{#2}}
\def\R{{\bf R}}
\def\n{\noindent}
\def\dw{\downarrow}
\def\uw{\uparrow}
\def\hang{\hangindent\parindent}
\def\textindent#1{\indent\llap{#1\enspace}\ignorespaces}
\def\re{\par\hang\textindent}
\def\d{\displaystyle}
\def\rp{\rightharpoonup}
\def\rup#1{\stackrel{\rightharpoonup}#1}
\def\lup#1{\stackrel{\leftharpoonup}#1}
\def\rls#1{\stackrel{\rightleftharpoons}#1}
\def\up#1{\stackrel#1{=}}
\def\btp{\bigtriangleup}
\def\lf{\lfloor}
\def\rf{\rfloor}
\def\b#1{\overline#1}
\def\bga{\begin{array}}
\def\eda{\end{array}}
\def\pr{\protect}
\def\ka{\kappa}
\def\ze{\zeta}
\def\de{\delta}
\def\ded{\delta _0}
\def\De{\Delta}
\def\pt{\partial}
\def\bs{\backslash}
\def\orw{\overrightarrow}
\def\ev{\equiv}
\def\ol{\overline}
\def\Gm{\Gamma}
\def\gm{\gamma}
\def\ga{\gamma}
\def\th{\theta}
\def\nb{\nabla}
\def\ep{\epsilon}
\def\la{\lambda}
\def\La{\Lambda}
\def\sg{\sigma}
\def\Sg{\Sigma}
\def\ba{\mathbf{a}}
\def\bb{\mathbf{b}}
\def\bc{\mathbf{c}}
\def\bd{\mathbf{d}}
\def\be{\mathbf{e}}
\def\bbf{\mathbf{f}}
\def\bg{\mathbf{g}}
\def\bh{\mathbf{h}}
\def\bi{\mathbf{i}}
\def\bj{\mathbf{j}}
\def\bk{\mathbf{k}}
\def\bl{\mathbf{l}}
\def\bbm{\mathbf{m}}
\def\bn{\mathbf{n}}
\def\bo{\mathbf{o}}
\def\bp{\mathbf{p}}
\def\bq{\mathbf{q}}
\def\br{\mathbf{r}}
\def\bs{\mathbf{s}}
\def\bt{\mathbf{t}}
\def\bu{\mathbf{u}}
\def\bv{\mathbf{v}}
\def\bw{\mathbf{w}}
\def\bx{\mathbf{x}}
\def\by{\mathbf{y}}
\def\bz{\mathbf{z}}

\def\bA{\mathbf{A}}
\def\bB{\mathbf{B}}
\def\bC{\mathbf{C}}
\def\bD{\mathbf{D}}
\def\bE{\mathbf{E}}
\def\bF{\mathbf{F}}
\def\bG{\mathbf{G}}
\def\bH{\mathbf{H}}
\def\bI{\mathbf{I}}
\def\bJ{\mathbf{J}}
\def\bK{\mathbf{K}}
\def\bL{\mathbf{L}}
\def\bM{\mathbf{M}}
\def\bN{\mathbf{N}}
\def\bO{\mathbf{O}}
\def\bP{\mathbf{P}}
\def\bQ{\mathbf{Q}}
\def\bR{\mathbf{R}}
\def\bS{\mathbf{S}}
\def\bT{\mathbf{T}}
\def\bU{\mathbf{U}}
\def\bV{\mathbf{V}}
\def\bW{\mathbf{W}}
\def\bX{\mathbf{X}}
\def\bY{\mathbf{Y}}
\def\bZ{\mathbf{Z}}

\def\wtd{\widetilde}
\def\al{\alpha}
\def\td{\tilde}
\def\rw{\rightarrow}
\def\iy{\infty}
\def\Om{\Omega}
\def\om{\omega}
\def\sb{\subset}
\def\sp{\supset}
\def\sq{\sqrt}
\def\sbq{\subseteq}
\def\spq{\supseteq}
\def\d{\displaystyle}
\def\dfr#1#2{\displaystyle{\frac{#1}{#2}}}
\def\u{\underline}
\def\li#1#2{\displaystyle{\lim_{#1\rightarrow#2}}}
\renewcommand{\figurename}{Fig.}




\title{A generalized Riemann problem solver for a hyperbolic model of two-layer thin film flow}

\author{Rahul Barthwal, Christian Rohde, Yue Wang}
\author{Rahul Barthwal\thanks{Institute of Applied Analysis and Numerical Simulation, University of Stuttgart, 70569, Stuttgart, Germany; Email: rahul.barthwal@mathematik.uni-stuttgart.de}, Christian Rohde\thanks{Institute of Applied Analysis and Numerical Simulation, University of Stuttgart, 70569, Stuttgart, Germany; Email: christian.rohde@mathematik.uni-stuttgart.de}, and Yue Wang\thanks{National Key Laboratory of Computational Physics, Institute of Applied Physics and Computational Mathematics, 100094, Beijing, P. R. China; Email: wang\_yue@iapcm.ac.cn}}
\maketitle




\begin{abstract}
In this paper, a second-order generalized Riemann problem (GRP) solver is developed for a two-layer thin film model. Extending the first-order Godunov approach, the solver is used to construct a temporal-spatial coupled second-order GRP-based finite-volume method. Numerical experiments including comparisons to MUSCL finite-volume schemes with Runge-Kutta time stepping confirm the accuracy, efficiency and robustness of the 
higher-order ansatz.

The construction of GRP methods requires to compute temporal derivatives of intermediate states in the entropy solution of the generalized Riemann problem. These derivatives are obtained from the Rankine-Hugoniot conditions as well as a
characteristic decomposition using Riemann invariants. Notably, the latter can be computed explicitly for the two-layer thin film model, which renders this system to be very suitable for the GRP approach. Moreover, it becomes possible to determine the derivatives in an explicit, computationally cheap way. 

\end{abstract}
{\bf Key words:}{ \small  }  \
Generalized Riemann problem, Riemann invariants, Thin film flows, Finite-volume schemes\\
{\bf AMS subject classification 2000: } { \small}
  Primary: 65M06, 35L60; Secondary: 35L65, 76M12.

\pagestyle{myheadings} \thispagestyle{plain} \markboth{
   GRP for thin film flow} {  }
\section{Introduction}\label{sec: 1}
The generalized Riemann problem (GRP) method was originally proposed and developed for compressible fluid dynamics \cite{Ben-Artzi-1984}. By now, it has been utilized for many examples of hyperbolic dynamics, including the motion of elastic strings \cite{ta1992generalized},  shallow water flows \cite{li2006generalized}, reactive flows\cite{ben1989generalized}, multi-fluid and multi-phase flows\cite{Qi2014,wang2023stiffened,lei2021staggered,zhang2024generalized}, special relativistic hydrodynamics \cite{yang2011direct,yang2012direct, wu2016direct}, radiative hydrodynamics \cite{kuang2019second},  blood flow models in arteries \cite{sheng2021direct} and moment closure equations \cite{wang2025second}. 

The second-order GRP method relies on the computation of so-called instantaneous time derivatives of the conservative variables.  These appear as states in the intermediate regions between two elementary waves in the wave pattern of the second order Riemann problem, i.e., the initial value problem for the hyperbolic balance law with piecewise affine initial data.  To find the derivatives, the knowledge of Riemann invariants is crucial: the hyperbolic system can then be converted into a diagonal form
(see Chapter 7 in \cite{dafermos2005hyperbolic} for more details). In fact, Ben-Artzi\&Li \cite{ben2007hyperbolic} proved that GRP solvers can be always developed for such systems.

Recently, Barthwal\&Rohde \cite{barthwal2025hyperbolic} suggested a $(4\times 4)$-hyperbolic system of conservation laws which governs the first-order dynamics of a two-layer thin film with (anti-) surfactants. It is given by
\begin{align}\label{eq: Main_system}
\begin{cases}
    \dfrac{\partial f}{\partial t}+\dfrac{1}{2}\dfrac{\partial}{\partial x}\left(f^2b\right)&=0,\vspace{0.2 cm} \\
    \dfrac{\partial b}{\partial t}+\dfrac{1}{2}\dfrac{\partial}{\partial x}\left(fb^2\right)&=0,\vspace{0.2 cm} \\
     \dfrac{\partial g}{\partial t}+\dfrac{\partial}{\partial x}\left(\dfrac{g^2q}{2}+fgb\right)&=0,\vspace{0.2 cm} \\
  \dfrac{\partial q}{\partial t}+\dfrac{\partial}{\partial x}\left(\dfrac{gq^2}{2}+fbq\right)&=0,
  \end{cases}
\end{align}
where $f$ and $g$ denote the positive film thicknesses in the two layers.  The quantities 
$b$ 
 and
$q$ 
denote the spatial derivatives of the concentrations of the solute in each layer. The first-order system \eqref{eq: Main_system} can be written in the compact conservative form 
\begin{align}
&\mathbf{U}_t+(\mathbf{F}{(\mathbf{U}))}_x=0,\label{conservation-laws}
\end{align}
where $\mathbf{U}=(f,b,g,q)^{\top}\in \mathcal{U}$ denotes the vector of conservative variables and 
\begin{align}\label{flux}
\mathbf{F}(\mathbf{U})=\left(\frac 12 f^2b,\frac 12fb^2,\frac 12 g^2q+fbg,\frac 12 gq^2+fbq\right)^{\top}
\end{align}
is the flux vector. 

The system \eqref{eq: Main_system} can be decoupled along its Riemann invariants (see Section \ref{sec: 2} below), and the set of Riemann invariants forms a coordinate system of \eqref{eq: Main_system} \cite{barthwal2025hyperbolic}. Based on such a decomposition along Riemann invariants, an entropy/entropy-flux pair with a strictly convex entropy for the system has been found in \cite{barthwal2025hyperbolic}. In what follows, we consider entropy solutions of \eqref{eq: Main_system} in the state space
\begin{align}\label{statespace}
    \mathcal{U}=\{(f, b, g, q)\in \mathbb{R}^4: f, g, q>0, b<0, fb+gq\geq 0\}.
\end{align}
This state space allows us to consider the negative concentration gradient in the first phase. However, the concentration gradient in the second phase is considered to be positive for technical reasons. Physically, the state space refers to the situation where the interfacial stress and the surface tension stress are competing with each other. The Marangoni stress in the first layer drives the flow towards the left, while the Marangoni stress in the second layer drives the flow towards the right. Therefore, it is interesting to understand the mutual effects. Other possible state spaces, where the system is hyperbolic, are discussed in Appendix \ref{appendix-B}.

The modelling, analysis, and numerics of thin film flows have gained a lot of attention from researchers working in the field of hyperbolic balance laws. Notably, hyperbolic models for thin film flow down an inclined plane 
or under the influence of gravity or surfactants have been developed and analyzed,  see e.g., \cite{dhaouadi2022hyperbolic, barthwal2022two, bertozzi1999undercompressive, barthwal2023construction, pandey2025construction, levy2006motion, cook2008shock} and references cited therein. It is vital to understand the dynamics of such flows as they find important applications in technical and environmental fields like coating technologies, thin film solar cell technologies, surfactant replacement therapies, etc.   To the best of the authors' knowledge,  there is almost no work on taylored high-order schemes for  such models. Not only this, but most of the recently developed second-order or higher-order GRP solvers are restricted to the case of hyperbolic systems where the mathematical structure closely resembles that of the compressible Euler equations. 
We exploit the special structure of \eqref{eq: Main_system} and develop a novel second-order GRP solver for the system \eqref{eq: Main_system}. Precisely, we  leverage the fact that the system \eqref{eq: Main_system} can be decoupled along its Riemann invariants. In particular, an invertible system of linear equations is formulated to obtain the instantaneous time derivatives of the conservative variables by using the Riemann invariants and Rankine-Hugoniot conditions. 

The discussion in this article is not just of theoretical significance in explicitly computing instantaneous time derivatives of the conservative variables, but also serves as the basis of spatio-temporal coupled
high-order numerical schemes \cite{Ben-Artzi-1984, Ben-Artzi-2003, ben2006direct}. Notably, GRP methods can improve accuracy and reduce computational costs compared to other approaches with the same numerical errors \cite{wang2023stiffened}. In recent years, numerous studies have been successfully conducted in this direction. The adaptive direct Eulerian GRP method was developed in \cite{han2010adaptive} by coupling the GRP method with the moving mesh method \cite{tang2003adaptive}, leading to improved resolution and accuracy. In addition, the adaptive GRP method was extended to unstructured triangular meshes \cite{li2013adaptive} and utilized to simulate 2D complex wave configurations formulated with the 2D Riemann problems of the Euler equations \cite{han2011accuracy}. The third-order GRP methods for the Euler equations \cite{wu2014third2} and general hyperbolic balance laws \cite{goetz2018family,  goetz2016novel, qian2023high} are some of the examples of the numerous efforts on high-order GRP methods. Furthermore, a two-stage fourth-order time-accurate technique was presented using the second order GRP solver as the building block. It was firstly combined with the finite volume framework for hyperbolic conservation laws \cite{li2016two}. Recently, arbitrarily high-order DG schemes based on the GRP solver were developed in \cite{wang2015arbitrary}, where the reconstruction steps were halved compared with the existing high-order Runge-Kutta DG (RKDG) schemes. When compared to the same order multi-stage strong-stability-preserving (SSP) RKDG technique, the computational cost of the two-stage fourth-order DG method based on the GRP solver can be significantly decreased \cite{cheng2019two}. This indicates the versatility of the GRP method.
\medskip

We conclude the introductory part with the  basic setup for the proposed second-order finite-volume solver. It shows how the solution of the GRP that we consider in the subsequent sections enters the algorithm. 

For the sake of simplicity, we divide the space domain  $\mathbb R$ into a uniform grid 
$\{x_{j+\frac12}, j\in\mathbb Z\}$ with  
$\Delta x:=x_{j+\frac12}-x_{j-\frac12}$.
Define the cell $I_j:=[x_{j-\frac{1}{2}},x_{j+\frac{1}{2}}]$ 
with mid point $x_j = (x_{j+\frac12}+ x_{j-\frac12})/2$. 
Let for  cell averages  $\mathbf{U}_j^k\in \mathcal{U}$ the piecewise constant approximation
\[
\bar\bU_{\Delta x}(x, t^k) = \mathbf{U}_j^k, \qquad
x\in I_j
\]
be given at some time $t^k\ge 0$ for $k\in \mathbb N$ and $t^0 =0$.

Then we summarize the second-order finite-volume method in the following steps. 
\begin{enumerate}
    \item[(i)] {\em (Piecewise linear reconstruction of data).} Let $j\in \mathbb{Z}$. From the piecewise constant approximate  solution, $\bar\bU_{\Delta x}(x, t^k)$,  we  define the  piecewise first-order polynomial
  \begin{equation}\label{piecewise linear initial data}
\mathbf{{U}}_{\Delta x}(x,t^k)=\bar\bU_{\Delta x}(x, t^k)+{\boldsymbol{\sigma}}_j^k(x-x_j), \quad x\in I_j.
\end{equation}
Here, 
the parameter $\boldsymbol{\sigma}_j^k$ is computed by a slope limiter.

\item[(ii)]{\em (Computation of the generalized Riemann problem).} 
 Let $j\in \mathbb{Z}$. Use Section \ref{sec: 2.4} to compute the vector 
$\mathbf{U}_{j+1/2}^{k} :=R^A\left(0;\mathbf{U}_{j+\frac{1}{2},L}^{k},\mathbf{U}_{j+\frac{1}{2},R}^{k}\right)$, which denotes the self-similar entropy solution of the Riemann problem  for \eqref{eq: Main_system} centered at $(x_{j+\frac{1}{2}},t^k)$ with the initial datum

\begin{equation}\label{local RP1}
\mathbf{U}(x,0)=\begin{cases}
                    \mathbf{U}_{j+\frac{1}{2},L}^{k},
                     \quad x<x_{j+\frac{1}{2}}, \\
                    \mathbf{U}_{j+\frac{1}{2},R}^{k},
                     \quad x>x_{j+\frac{1}{2}}.
                  \end{cases}
\end{equation}
In \eqref{local RP1}, $\mathbf{U}_{j+\frac{1}{2},L}^{k}=\mathbf{U}_{j}^{k}+\frac{\Delta x}{2}\boldsymbol{\sigma}_j^k$ and $\mathbf{U}_{j+\frac{1}{2},R}^{k}=\mathbf{U}_{j+1}^{k}-\frac{\Delta x}{2}\boldsymbol{\sigma}_{j+1}^k$ are the left and right trace values of $\mathbf{U}_{\Delta x}(\cdot,t^k)$ at $x_{j+\frac{1}{2}}$.
Furthermore, compute the instantaneous time derivatives $\left(\frac{\partial \mathbf{U}}{\partial t}\right)_{j+1/2}^{k}$ of the entropy solution $\mathbf{U}$ at the cell interface $x=x_{j+1/2}$ with the initial datum
\begin{equation}\label{local GRP1}
\mathbf{U}(x,0)=\begin{cases}
                    \mathbf{U}_{j}^{k}+{\boldsymbol{\sigma}}_j^k(x-x_j),
                     \qquad\qquad x<x_{j+\frac{1}{2}}, \\
                    \mathbf{U}_{j+1}^{k} +{\boldsymbol{\sigma}}_{j+1}^k(x-x_{j+1}),
                     ~\quad x>x_{j+\frac{1}{2}},
                  \end{cases}
\end{equation}
according to Theorems \ref{non-sonic-case}, \ref{acoustic} and \ref{non-sonic-case-wave-2} by the GRP solver. 


  \item[(iii)]  {\em (Finite-volume step: Advancing cell-averages).} 
Let $\lambda_i, ~i\in \{1, \ldots, 4\}$ be the characteristic speeds of the system \eqref{eq: Main_system} and $\Delta t_k$ satisfies for $\rm{CFL}>0$, the  time 
 step condition   \[
\max_{i\in \{1,\ldots,4\},\, j\in\mathbb{Z}  }|\lambda_{i}(\mathbf{U}_j^k)|\Delta t_k\leq 
 {\rm{CFL}}\,\Delta x.\] We compute $t^{k+1} = t^k + \Delta t_k$
and update the cell-average at the next time step according to the formula
  \begin{equation}\label{1d_scheme}
\mathbf{U}_j^{k+1}=\mathbf{U}_j^{k}-\frac{\Delta t_k}{\Delta x}\left(\mathbf{F}_{j+\frac{1}{2}}^{k+\frac{1}{2}}-\mathbf{F}_{j-\frac{1}{2}}^{k+\frac{1}{2}}\right).
\end{equation}
Here $\mathbf{F}_{j+\frac{1}{2}}^{k+\frac{1}{2}}$ is given by 
\begin{align}
    &\mathbf{F}_{j+\frac{1}{2}}^{k+\frac{1}{2}}=
\mathbf{F}\left(\mathbf{U}_{j+1/2}^{k+1/2}\right),\label{flux_computation}\\
&\mathbf{U}_{j+1/2}^{k+1/2}=\mathbf{U}_{j+1/2}^{k}+\frac{\Delta t_k}{2}\left(\frac{\partial \mathbf{U}}{\partial t}\right)_{j+1/2}^{k}.\label{solution_midpoint}%
\end{align}
 

\item[(iv)]{\em (Slope update).} Evaluate and update for $\theta \in [0, 2)$, the slope by using the monotonicity slope limiters as follows \cite{hesthaven2017numerical, leveque1992numerical},
\begin{align}\label{slope_update}
\boldsymbol{\sigma}_j^{k+1}=\text{minmod}\left(\theta \dfrac{\mathbf{U}_j^{k+1}-\mathbf{U}_{j-1}^{k+1}}{\Delta x}, \dfrac{\mathbf{U}_{j+1/2}^{k+1, -}-\mathbf{U}_{j-1/2}^{k+1, -}}{\Delta x}, \theta \dfrac{\mathbf{U}_{j+1}^{k+1}-\mathbf{U}_{j}^{k+1}}{\Delta x}\right),
\end{align}
where
\begin{align}\label{updated_slope_value}
    \mathbf{U}_{j+1/2}^{k+1, -}&=\mathbf{U}_{j+1/2}^k+\Delta t_k \left(\dfrac{\partial \mathbf{U}}{\partial t}\right)_{j+1/2}^k.
\end{align}
\end{enumerate}

The formula in \eqref{flux_computation} is supposed to approximate the time-integrated flux
\[.
\frac{1}{\Delta t}\int_{t^k}^{t^{k+1}}\bF\left(\bU(x_{j+\frac 12},t)\right)\,dt
\]
up to second order. Therefore, we can expect to obtain a second-order in space and time accurate scheme. 

The remainder of the article is organized as follows. Section \ref{sec: 2} is devoted to provide preliminaries for the system and an exact Riemann solver for the system \eqref{eq: Main_system}. Section \ref{sec: 3} discusses the solution of GRP for the first of two possible wave configurations, 
while Section \ref{sec: 4} is devoted to the other essentially different wave configuration.
Numerical experiments
are presented in Section \ref{sec: 5} to demonstrate the accuracy and performance of the proposed GRP method. In particular, we construct new exact travelling waves for the system \eqref{eq: Main_system} and use them to validate the numerical results obtained via the GRP method. Concluding remarks and future scopes are provided in Section \ref{sec: 6}. In Appendix \ref{appendix-B}, we discuss other possible state spaces, where the system \eqref{eq: Main_system} is hyperbolic.
\section{Characteristic analysis and the Riemann problem for \eqref{eq: Main_system}}\label{sec: 2}
The Riemann problem plays a fundamental role in designing Godunov-type numerical algorithms. It helps us to understand the interaction of elementary waves, which is also the basis of the GRP solver. 
In this section, we summarize the results from \cite{barthwal2025hyperbolic}, which includes the characteristic analysis of the hyperbolic system \eqref{eq: Main_system} and the related exact Riemann solution.
\subsection{Characteristic analysis for the system \eqref{eq: Main_system}}\label{subsec: 3.2}
For smooth solutions, we can convert the system $\eqref{eq: Main_system}$ into its primitive form as
\begin{align}\label{eq: 3.6}
&\mathbf{U}_t+\mathbf{DF}(\mathbf{U})\mathbf{U}_x=0,\\
&\mathbf{DF}(\mathbf{U})=\left(
\begin{array}{cccc}
fb & \frac 12 f^2 & 0 & 0\\
\frac 12 b^2 & fb & 0 & 0\\
gb & fg & fb+gq & \frac 12 g^2\\
bq & fq & \frac 12 q^2 & fb+qg \\
\end{array}
\right). \nonumber
\end{align}
The eigenvalues $\lambda_i= \lambda_i(\mathbf{U})$ of the block-structutred Jacobian $\mathbf{DF}(\mathbf{U})$ can be easily computed. Considering the state space \eqref{statespace}, the eigenvalues are real and given by
\begin{align}
\lambda_1(\mathbf{U})=\dfrac{3fb}{2}<~\lambda_2(\mathbf{U})=\dfrac{fb}{2}\leq ~\lambda_3(\mathbf{U})=fb+\dfrac{gq}{2}<~\lambda_4(\mathbf{U})=fb+\dfrac{3gq}{2}.
\end{align}
Moreover, the right eigenvectors $\mathbf{r_i}= \mathbf{r_i}(\mathbf{U})$ of $\mathbf{DF}(\mathbf{U})$ are
\begin{align}
&\mathbf{r_1}(\mathbf{U})=\left(\frac{fb-3gq}{4qb},\frac{fb-3gq}{4qf},\frac{g}{q},1\right)^{\top},\ \mathbf{r_2}(\mathbf{U})=\left(-\frac{f}{b},1,0,0\right)^{\top},\notag\\
&\mathbf{r_3}(\mathbf{U})=\left(0,0,-\frac{g}{q},1\right)^{\top},\ \mathbf{r_4}(\mathbf{U})=\left(0,0,\frac{g}{q},1\right)^{\top}.\label{righteigenvectors}
\end{align}
The eigenvectors are linearly independent and thus form a complete basis of $\mathbb{R}^4$ for $\mathbf{U}\in \mathcal{U}$, which implies the strict hyperbolicity of the system \eqref{eq: Main_system}.

It is easy to see that $\nabla_{\mathbf{U}} \lambda_2\cdot\mathbf{r_2}=\nabla_{\mathbf{U}} \lambda_3\cdot\mathbf{r_3}=0$ and $\nabla_{\mathbf{U}} \lambda_1\cdot \mathbf{r_1}=3fb\neq 0$ and $\nabla_{\mathbf{U}} \lambda_4\cdot\mathbf{r_4}=3gq\neq 0$ holds, which implies that the second and third characteristic fields are linearly degenerate while the first and fourth characteristic fields are genuinely nonlinear in $\mathcal{U}$. Note that $\nabla_{\mathbf{U}}$ denotes the gradient of any vector field with respect to the conservative vector $\mathbf{U}=(f, b, g, q)^T$.
\subsection{Riemann invariants}
In order to resolve the rarefaction waves for system \eqref{eq: Main_system}, it is often convenient to introduce a set of Riemann invariants, which can form a coordinate system. Indeed, the system \eqref{eq: Main_system} is equipped with a full set of three $k$-Riemann invariants $\Gamma_1^k,\Gamma_2^k,\Gamma_3^k$ corresponding to the $k\rm{th}$-characteristic field, $k=1,\ldots, 4$. They are given by 
\begin{align}
\begin{cases}
\Gamma_1^1 ={f}/{b},~\Gamma_2^1={g}/{q},~\Gamma_3^1={(fb+gq)}/{(gq)^{1/4}},\\
\Gamma_1^2=g,~\quad\Gamma_2^2=q,~\quad\Gamma_3^2=fb,\\
\Gamma_1^3=f,~\quad\Gamma_2^3=b,~\quad\Gamma_3^3=gq,\\
   \Gamma_1^4=f,~\quad\Gamma_2^4=b,~\quad\Gamma_3^4={g}/{q}.
\end{cases}   \label{Rinvariants} 
\end{align}
From \eqref{Rinvariants} we select the four Riemann invariants 
\begin{equation}\label{selectedinvariants}
\xi=w_1=f/b, \quad     u= w_2 = fb,  \quad \tau = w_3 = g/q, \quad 
\eta = w_4=(fb+gq)/(gq)^{1/4}.
\end{equation}
These form a coordinate system for \eqref{eq: Main_system}, i.e., the mapping 
\[
\mathbf{U}  = (f,b,g,q)^\top \mapsto \mathbf{W}= (u,\xi, \tau,\eta)^\top
\]
is one-to-one in $\mathcal U$ and the system \eqref{eq: Main_system} can be transformed into the diagonal form (see \cite{barthwal2025hyperbolic} for more details)
\begin{equation}\label{Rtransformed}
\begin{array}{rclrcl}
u_t+\dfrac{3 u}{2} u_x&=&0, \vspace{0.2 cm}\\
   \xi_t+\dfrac{u}{2} \xi_x&=&0,\vspace{0.2 cm}\\ 
   \tau_t+\left(u+\dfrac{v}{2}\right)\tau_x&=&0, \vspace{0.2 cm}\\
\eta_t+\left(u+\dfrac{3v}{2}\right)\eta_x&=&0,
\end{array}    
\end{equation}
where $v=gq$. Here, $v$ satisfies the following non-conservative equation
\begin{align}
    &v_t+2vu_x+(u+\frac 32v)v_x=0.\label{simeqn2}
\end{align}
The non-conservative equation for $v$ helps us to find relations along Riemann invariants, as we will see in Section \ref{sec: 3} and Section \ref{sec: 4}.
\subsubsection{Rarefaction waves}\label{subsubsec: rarefaction}
Before we construct rarefaction wave curves, we
recall that the Riemann invariants  $\Gamma^k_1,\Gamma^k_2,\Gamma^k_3$ from \eqref{Rinvariants} remain constant across a corresponding $k$-rarefaction wave \cite{dafermos2005hyperbolic}.
A $k$-rarefaction wave can only exist for a genuinely nonlinear field. Therefore we consider the cases $k\in \{1,4\}$, see \cite{dafermos2005hyperbolic}.

We start with $k=1$. The slope inside a 1-rarefaction  wave is given by
\begin{align*}
    \dfrac{dx}{dt}=\dfrac{x}{t}=\lambda_1=\dfrac{3fb}{2}.
\end{align*}
The characteristic speed increases across the rarefaction wave, which means that $fb\geq f_lb_l$ must hold for any left state $\mathbf{U}_l=(f_l, b_l, g_l, q_l)^{\top}\in \mathcal{U}$ across the 1-rarefaction wave. 
Thus, using the Riemann invariants $\Gamma^1_1,\Gamma^1_2,\Gamma^1_3$ from \eqref{Rinvariants}, the solution inside the 1-rarefaction wave is given for the left state $\mathbf{U}_l$ by 
\begin{align}\label{eq: 4.2R} R_1:=
    \begin{cases}
        \dfrac{dx}{dt}=\dfrac{x}{t}=\lambda_1=\dfrac{3fb}{2},\vspace{0.2 cm}\\
        \dfrac{f}{b}=\dfrac{f_l}{b_l},~\dfrac{fb+gq}{(gq)^{1/4}}=\dfrac{f_lb_l+g_lq_l}{(g_lq_l)^{1/4}}, ~\dfrac{g}{q}=\dfrac{g_l}{q_l},\vspace{0.2 cm}\\
        fb\geq f_lb_l.
    \end{cases}
\end{align}
$R_1$ is a curve in  the $(f, b, g, q)$-space emanating from $\mathbf{U}_l$.

Similarly, the 4-rarefaction wave curve starting from a left state $\mathbf{U}_l$ is given  by
\begin{align} \label{R4} R_4:=
    \begin{cases}
        \dfrac{dx}{dt}=\dfrac{x}{t}=\lambda_4=fb+\dfrac{3gq}{2},\vspace{0.2 cm}\\
        \dfrac{g}{q}=\dfrac{g_l}{q_l},~f=f_l, b=b_l,\vspace{0.2 cm}\\
        g_lq_l\leq gq.
    \end{cases}
\end{align}

\subsubsection{Discontinuous waves: shock and contact waves}\label{sec: 2.3}
For speed $\sigma \in \mathbb{R}$,  a discontinuous wave 
\begin{align} \label{wave}
    \mathbf{U} (x, t)=\begin{cases}
        \mathbf{U}_l=(f_l, b_l, g_l, q_l )^\top\in {\mathcal U}&:~x-\sigma t<0,\\
        \mathbf{U}_r=(f_r, b_r, g_r, q_r)^\top \in {\mathcal U}&:~x-\sigma t> 0,
    \end{cases}
\end{align}
is a distributional solution of \eqref{eq: Main_system} if the  Rankine-Hugoniot conditions 
\begin{align}\label{RH}
\sigma[\![\mathbf{U}]\!]  =[\![\mathbf{F(U)}]\!]
\end{align}
hold. Here,  $[\![\mathbf{W}]\!]=\mathbf{W}_r-\mathbf{W}_l$ denotes the jump in $\mathbf{W}$ for $\mathbf{W}_l, \mathbf{W}_r\in \mathbb{R}^4$.

For the system \eqref{eq: Main_system},  the componentwise Rankine-Hugoniot relations  are
\begin{equation}\label{eq: 4.8}
    \begin{array}{rclrcl}
    \sigma[\![f]\!]\!\!\!\!&=&\!\!\!\!\dfrac{1}{2}[\![f^2b]\!],&
    \sigma[\![b]\!]\!\!\!\!&=&\!\!\!\!\dfrac{1}{2}[\![fb^2]\!],\vspace{0.2 cm}\\
    \sigma[\![g]\!]\!\!\!\!&=&\!\!\!\!\Big[\!\!\Big[\dfrac{g^2q}{2}+fgb\Big]\!\!\Big],&
    \sigma[\![q]\!]\!\!\!\!&=& \!\!\!\!\Big[\!\!\Big[\dfrac{gq^2}{2}+fbq\Big]\!\!\Big].
    \end{array}
\end{equation}
Along the second and third linearly degenerate characteristic fields, for a given left state $\mathbf{U}_l=(f_l, b_l, g_l, q_l)^{\top}\in \mathcal{U}$, the discontinuity curves are  curves through $\mathbf{U}_l$ in the $(f, b, g, q)$-space, which are given by
\begin{align}\label{eq: 23a}
    J_2:=\begin{cases}
    \sigma_2=\dfrac{fb}{2}=\dfrac{f_lb_l}{2},\\ 
    fb=f_lb_l,~g=g_l, ~q=q_l
    \end{cases}
\end{align}
and 
\begin{align}\label{eq: 23b}
    J_3:=\begin{cases}
    \sigma_3=fb+\dfrac{gq}{2}=f_lb_l+\dfrac{g_lq_l}{2},\\ 
    f=f_l, ~b=b_l,  ~gq=g_lq_l.
    \end{cases}
\end{align}
The corresponding 2- and 3-contact waves \eqref{wave} are entropy solutions of \eqref{eq: Main_system} by definition.

For $k\in \{1,4\}$, the $k$th characteristic field is genuinely nonlinear. We are interested in left/right states that satisfy the  Rankine-Hugoniot condition \eqref{RH} and are entropy admissible. The function $\mathbf{U}$ from \eqref{wave} is then called a $k$-shock wave.  For $k=1$ we get a curve starting from the left state $\mathbf{U_l}$ given by 
\begin{align}\label{S_2def}
   S_1:=\begin{cases} \sigma_1= \dfrac{b_l(f_l^2+f_lf+f^2)}{2f_l},\\ 
   \dfrac{f}{b}=\dfrac{f_l}{b_l}, ~\dfrac{g}{q}=\dfrac{g_l}{q_l},~g=g_l+\Phi(f_l, b_l, g_l, q_l, f, b, g, q),
   \\
   fb<f_lb_l,\end{cases}
\end{align}
with $\Phi$ being the  nonlinear function
\begin{align*}
    \Phi(f_l, b_l, g_l, q_l, f, b, g, q):= \dfrac{2f_l}{b_l(f_l^2+f_lf+f^2)}\bigg(\dfrac{g^2 q}{2}-\dfrac{g_l^2q_l}{2}+fb g-f_lb_lg_l\bigg).
\end{align*}
Given the strict hyperbolicity of the system \eqref{eq: Main_system}, the entropy admissibility of elements of $S_1$ with respect to the entropy/entropy-flux pair can be checked by the validity of the Lax entropy conditions. For the 1-shock  wave with right state $\mathbf{U}$ these are
\begin{align}\label{eq: 25}
    \lambda_1(\mathbf{U})<\sigma_1<\lambda_1(\mathbf{U}_l).
\end{align}
This results in the last inequality in \eqref{S_2def}. 

With the same arguments, we identify 4-shock waves which satisfy
\begin{align}\label{eq: 24b}
   S_4:=\begin{cases} \sigma_4= fb+\dfrac{q_l(g_l^2+g_lg+g^2)}{2g_l},\\ 
   \dfrac{q}{g}=\dfrac{q_l}{g_l},~f=f_l,~b=b_l,\\
   gq<g_lq_l.
   \end{cases}
\end{align}
We observe that the 4-shock wave relations \eqref{eq: 24b} coincide with the  4-rarefaction relations in \eqref{R4} and form a straight line in the $(f, b, g, q)$-space. This implies that the $4$th characteristic field is a Temple field \cite{temple1983systems}. However, note that the complete system does not belong to the  Temple class.
\subsection{Solution of the Riemann problem}\label{sec: 2.4}
Consider the  Riemann problem for \eqref{eq: Main_system}, that is, the Cauchy problem  with initial data of the form 
\begin{align}\label{RP-data}
    \mathbf{U} (x, 0)=\begin{cases}
        \mathbf{U}_L=(f_L, b_L, g_L, q_L )^\top\in {\mathcal U}&:~x<0,\\
        \mathbf{U}_R=(f_R, b_R, g_R, q_R)^\top \in {\mathcal U}&:~x>0.
    \end{cases}
\end{align}
Due to the fact that the second and third characteristic fields allow for contact discontinuities only, there are four wave configurations possible for the solution to the Riemann problem based on different choices of initial data. We would like to mention that the construction of the Riemann solver for state space $\mathcal{U}$ is similar to that constructed in the work of Barthwal and Rohde \cite{barthwal2025hyperbolic} using a slightly different state space. The only difference is that the first two waves change their role. For this reason, we will skip most of the details of solving the exact Riemann problem here and refer the interested reader to \cite{barthwal2025hyperbolic}.
Instead, we provide a flowchart of the exact Riemann solver in Figure \ref{fig:flowchart}.
\begin{figure}
    \centering
    \includegraphics[scale=0.5]{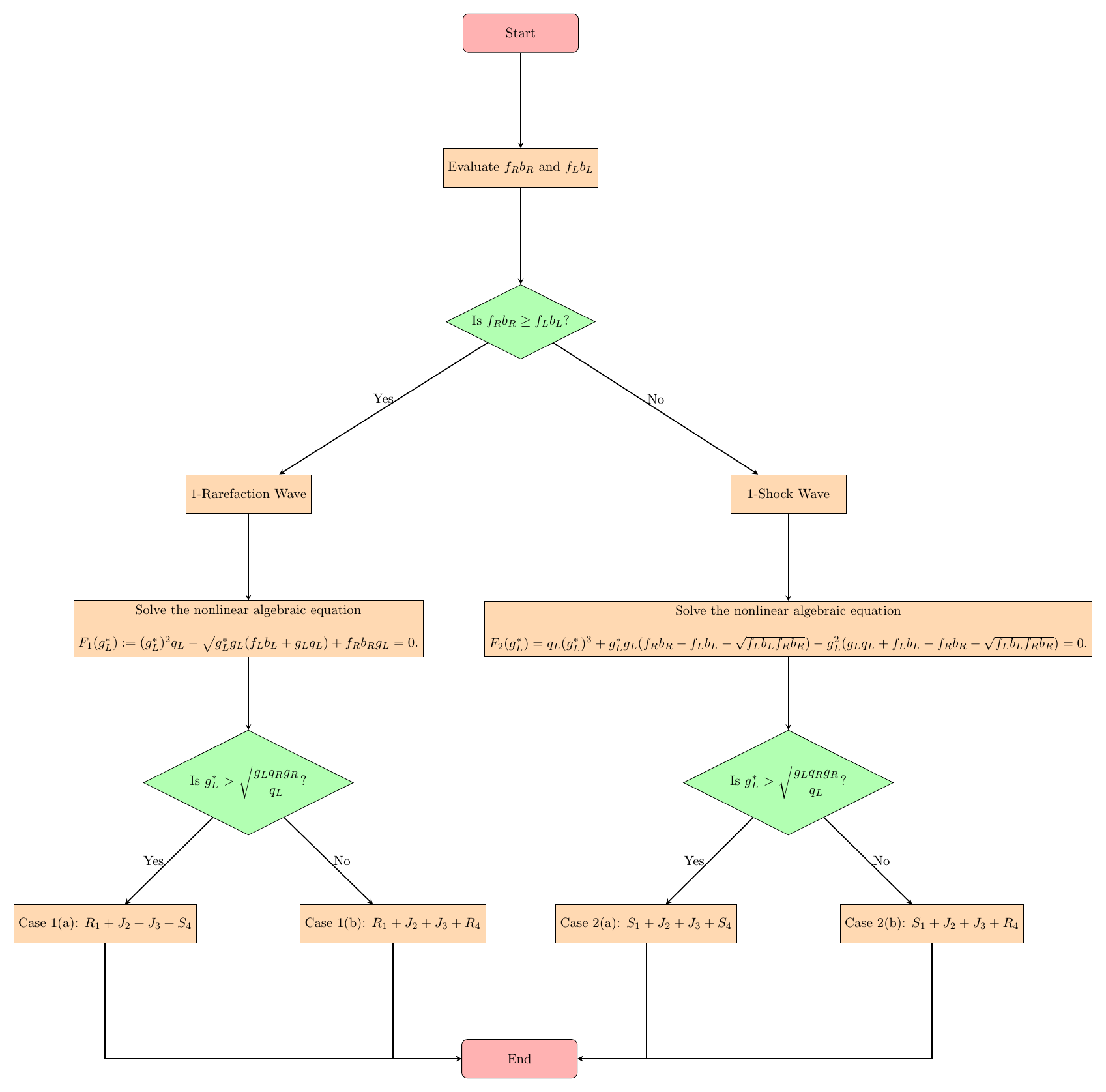}
    \caption{Flowchart of the Riemann solver for the system \eqref{eq: Main_system} that leads to four different wave patterns in Case 1(a), 1(b), 2(a) and 2(b). Here $S_i, R_i$ for $i\in \{1, 4\}$ denotes the $i$-shock and $i$-rarefaction wave, respectively.  The symbol $J_i$ for $i\in \{2, 3\}$ stands for an $i$-contact wave.}
    \label{fig:flowchart}
\end{figure}
\begin{figure}
\centering
  \begin{subfigure}[b]{0.45\linewidth}
  \includegraphics[width=3.19 in]{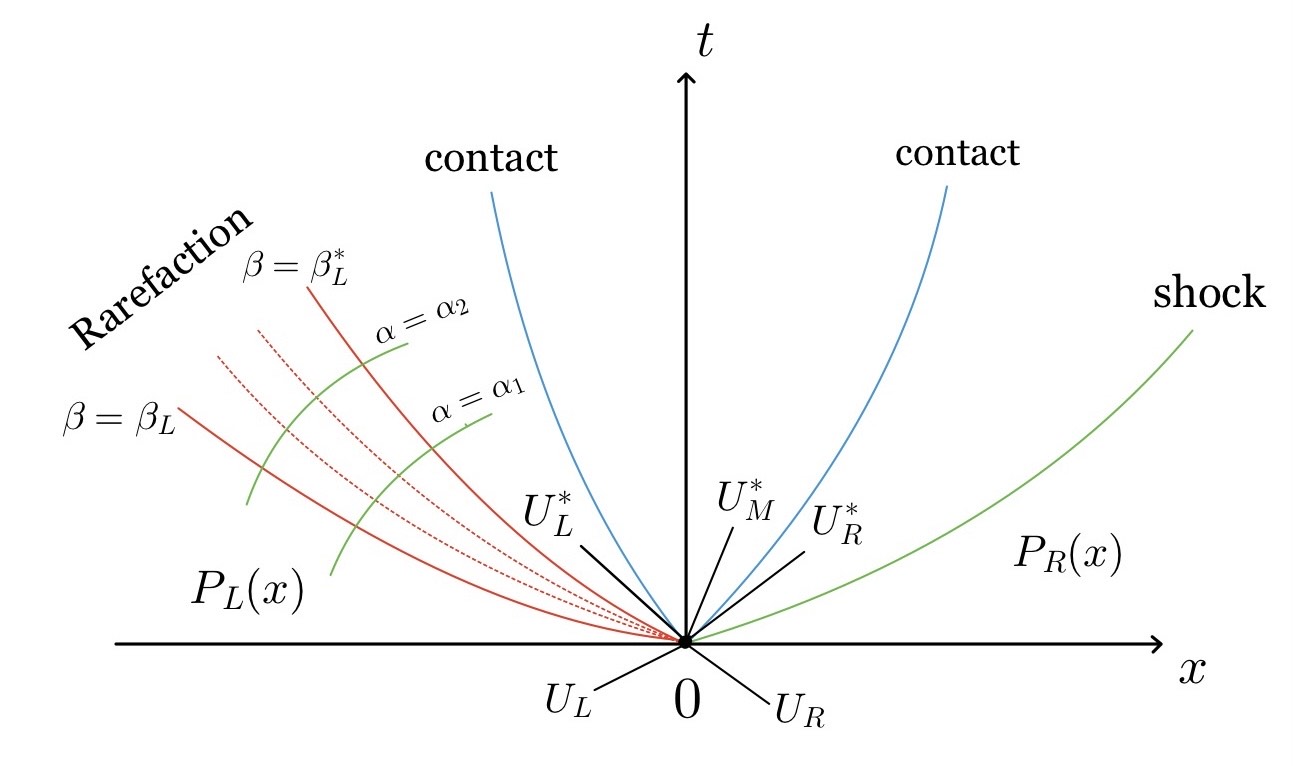}
  \caption{Typical wave configuration for the generalized Riemann problem of  Case 1(a). $R_1+J_2+J_3+S_4$.}\label{GRP-pic}
 \end{subfigure} \hspace*{0.6cm}%
 \begin{subfigure}[b]{0.45\linewidth}
     \includegraphics[width=3.1 in]{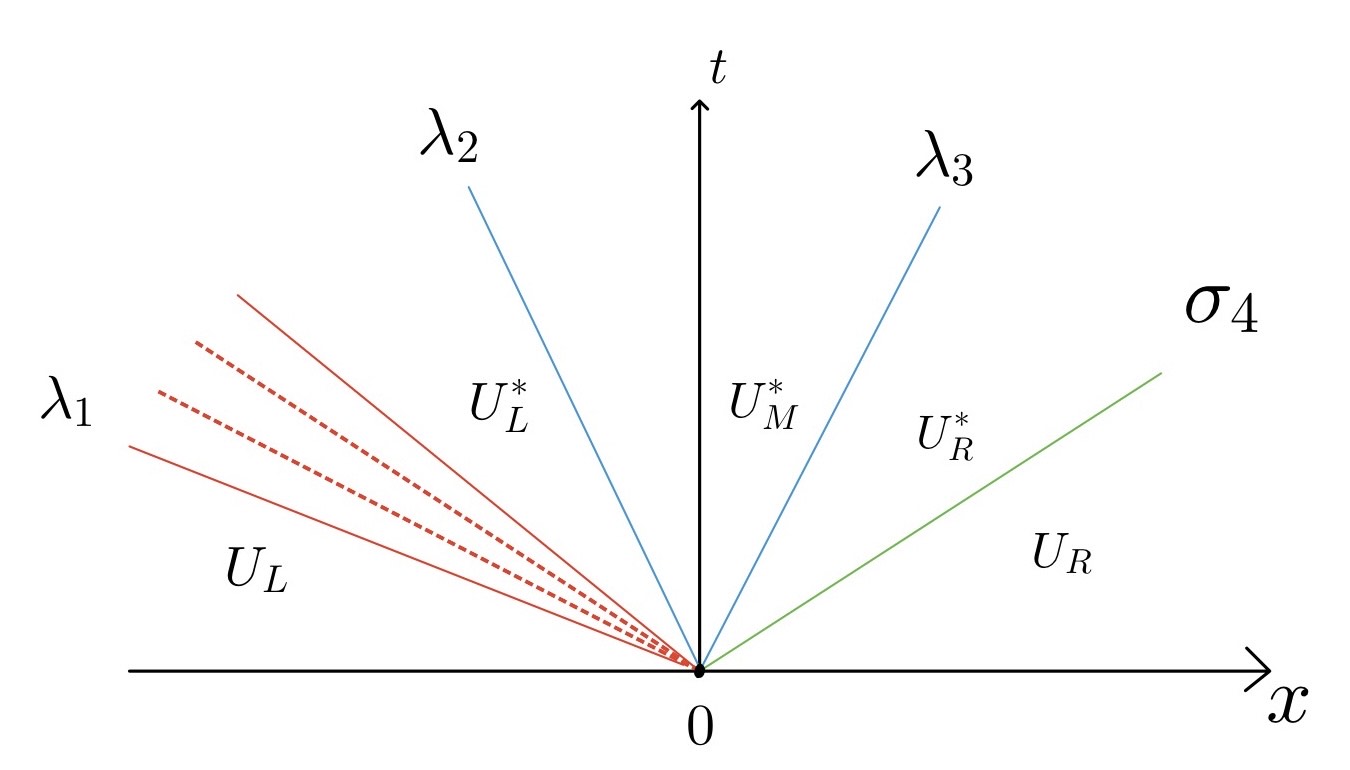}
  \caption{Typical wave configuration for the associated Riemann problem at the singularity for 
  Case 1(a). $R_1+J_2+J_3+S_4$.}\label{RP-pic}
 \end{subfigure}
\end{figure}
\subsection{The generalized Riemann problem for the system \eqref{eq: Main_system}}
For the generalized Riemann problem (GRP) for \eqref{conservation-laws}-\eqref{flux}, we consider the possibly discontinuous initial data
\begin{align}
\bU(x,0)=\left\{
\begin{array}{ll}
\bP_L(x), & x<0,\\
\bP_R(x), & x>0,
\end{array}
\right.
\end{align}
with $\bP_{L/R}$ being smooth functions, that obey together with $\bP_{L/R}'$, traces in $x=0$. The GRP solver serves to compute so-called instantaneous values in \eqref{solution_midpoint}
\begin{equation}
\bU_* = \lim_{t\rw 0^+} \bU(0,t), \ \ \ \left(\dfr{\pt \bU}{\pt t}\right)_* = \lim_{t\rw 0^+} \dfr{\pt \bU}{\pt t}(0,t).
\label{GRP-value}
\end{equation}
Define $\bU_L := \lim_{x\rw0^-}\bP_L(x)$, $\bU_R:=\lim_{x\rw 0^+}\bP_R(x)$, and $\bU_L':=\lim_{x\rw0^-}\bP'_L(x)$, $\bU'_R:=\lim_{x\rw 0^+}\bP'_R(x)$.
Previous studies \cite{Ben-Artzi-1984,Ben-Artzi-2003,ben2006direct} have shown that the instantaneous values in \eqref{GRP-value} depend only on the trace values $(\bU_L, \bU_R)$ and $(\bU_L', \bU'_R)$.  Therefore, we might as well  discuss the ``linear GRP" for  \eqref{conservation-laws}-\eqref{flux} directly  with the piecewise affine ansatz
\begin{align}
\bU(x,0)=\left\{
\begin{array}{ll}
\bU_L +\bU_L'x, \ \ \ &x<0,\\
\bU_R+\bU_R'x, \ \ \ &x>0,
\end{array}
\right.
\label{data-GRP}
\end{align}
and compute the corresponding values in \eqref{GRP-value}. 

As justified in \cite{Ben-Artzi-2003}, the solution of a GRP is based on its connection to the associated Riemann problem,
i.e., \eqref{eq: Main_system} subject to the piecewise constant initial data \eqref{RP-data}.
The self-similar entropy solution of the associated Riemann problem \eqref{RP-data} is denoted as $\bU^A(x/t) =R^A(x/t;\bU_L,\bU_R)$, and the entropy solution of the GRP \eqref{eq: Main_system}, \eqref{data-GRP} is denoted by $\bU(x,t)$. These solutions are connected by the relation
\begin{equation}
\lim_{t\rw 0^+}\bU(\la t,t )=R^A(\la;\bU_L,\bU_R), \ \  \ \ \la>0.
\end{equation}
With such a connection, the local wave configuration at the singularity $(0, 0)^\top$ is determined by the  solution of the associated Riemann problem.
For the  GRP solver, it remains to compute the instantaneous time derivative in \eqref{GRP-value}.

For the sake of illustration, we display 
a typical curved wave rarefaction-contact-contact-shock ($R_1+J_2+J_3+S_4$) configuration of the GRP with (non-constant) intermediate fields in \ref{GRP-pic}, together with the associated Riemann problem solution in \ref{RP-pic}. Other wave configurations can be discussed in a similar manner (see Figure \ref{fig:flowchart} for other possible wave configurations). We now proceed to obtain these instantaneous time derivatives of the conservative variables in the intermediate states for two such wave configurations.
\section{Resolution of the generalized Riemann problem for the wave configuration $R_1+J_2+J_3+S_4$}\label{sec: 3}
In this section, we focus on the elementary waves according to the setup of \ref{GRP-pic}, with the 1-rarefaction wave moving to the left and the 4-shock wave moving to the right, separated by two contact discontinuities. In particular, we formulate a uniquely solvable system of linear equations to evaluate the instantaneous time derivatives $(\partial \mathbf{U}/\partial t)^*_{_L}, (\partial \mathbf{U}/\partial t)^*_M$, and $(\partial \mathbf{U}/\partial t)^*_R$ for the wave configuration of \ref{GRP-pic}.  Depending on the specific wave pattern we have then found 
$\left(\dfr{\pt \bU}{\pt t}\right)_*$ in \eqref{GRP-value}.%
\subsection{The resolution of the 1-rarefaction waves}
One of the most significant features of the GRP method is the treatment of the resolution of centered rarefaction waves with characteristic coordinates. By centered rarefaction waves, we mean a curved fan region starting from one single point, which spreads smoothly outwards; see \ref{GRP-pic}. In this section, our main objective is to obtain the time derivatives of the flow variables at the singularity $(0, 0)^{\top}$ across the first rarefaction wave.

The initial data \eqref{data-GRP} can be regarded as perturbation of the Riemann initial data ($\bU_L$, $\bU_R$) such that, in general, a curved rarefaction wave occurs. 
In order to understand how the rarefaction wave expands, we resolve the singularity at the origin
using the technique of nonlinear geometrical optics\cite{ben2006direct}.
Let the space-time sets given by $\{\alpha(x,t) = C_1\}$ and $\{\beta(x,t)=C_2\}$ be the integral curves, respectively, of
\begin{align}
&\frac{dx}{dt}=u+\frac 32v,\ \frac{dx}{dt}=\frac 32 u.\notag
\end{align}
Away from the singularity $(0, 0)^{\top}$, there is a one-to-one correspondence $(x, t)\rw(\alpha, \beta)$ such that
\begin{align}
&\frac{\pt\alpha}{\pt x}\frac{dx}{dt}+\frac{\pt\alpha}{\pt t} = 0,\notag\\
&\frac{\pt\beta}{\pt x}\frac{dx}{dt}+\frac{\pt\beta}{\pt t} = 0.\notag
\end{align}
Regarding the inverse mapping $(\alpha, \beta)\rw(x, t)$, we have
\begin{align}
&\frac{\pt x}{\pt \alpha}=\frac 32u\frac{\pt t}{\pt \alpha},
\frac{\pt x}{\pt \beta}=(u+\frac 32v)\frac{\pt t}{\pt \beta}.\label{characteristic-coordinate}
\end{align}
In the following discussion, we often use both pairs of independent variables $(x, t)$ or $(\alpha, \beta)$.
For example, when we use $(\alpha, \beta)$ as independent variables, we write $(x, t)$ instead of $
(x(\alpha, \beta), t(\alpha, \beta))$, if no confusion is caused.
Thanks to the asymptotics of the GRP to the associated Riemann problem at the singularity
point, we denote by $\beta_L$ the speed of the wave head, by $\beta$ the wave speed
inside the rarefaction wave, and by $\beta^*_L$ the speed of the wave tail. Then we have the following
fact.

\begin{prop}\label{prop-singularity} Consider the curved rarefaction wave associated with $\lambda_1=\frac 32 fb$ and define $\Theta_L(\beta):=\frac{\pt t}{\pt \al}(0,\beta)$.  Then we have
\begin{equation}
\Theta_L(\beta)=\Theta_L(\beta_L)\left(\dfrac{v_L}{v(0,\beta)}\right)^{3/4}.\label{theta}
\end{equation}
\end{prop}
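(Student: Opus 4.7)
The plan is to exploit the characteristic coordinates $(\alpha,\beta)$ introduced in \eqref{characteristic-coordinate} and the fact that \emph{inside} the $1$-rarefaction fan the primitive quantities $u=fb$ and $v=gq$ depend only on $\beta$. The argument splits naturally into (i) a compatibility identity coming from equality of mixed partials, (ii) a reduction to an ODE along the fan $\alpha=0$, and (iii) an integration that relies on the explicit $1$-Riemann invariant $\eta = (u+v)/v^{1/4}$.

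First I would cross-differentiate the two identities in \eqref{characteristic-coordinate}: computing $\partial_{\beta}(3u/2\cdot \partial_{\alpha}t)$ and $\partial_{\alpha}((u+3v/2)\cdot\partial_{\beta}t)$ and equating them yields
\begin{equation*}
\tfrac{3}{2}u_{\beta}\,\Theta + \tfrac{3u}{2}\,\partial_{\beta}\Theta \;=\; \bigl(u_{\alpha}+\tfrac{3}{2}v_{\alpha}\bigr)\,\partial_{\beta}t \;+\; \bigl(u+\tfrac{3v}{2}\bigr)\,\partial_{\beta}\Theta,
\end{equation*}
with $\Theta := \partial t/\partial\alpha$. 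The next step is the key structural observation: the first equation of \eqref{Rtransformed} tells us that $u$ is constant along $1$-characteristics, hence $u=u(\beta)$ inside the fan; and since the $1$-Riemann invariant $\eta$ takes the constant value $\eta_L$ across the rarefaction, $v$ is determined by $u$ through $u+v=\eta_L v^{1/4}$, so $v=v(\beta)$ as well. Thus $u_{\alpha}=v_{\alpha}=0$ in the fan, and the compatibility identity collapses to the ODE
\begin{equation*}
3\,u_{\beta}(\beta)\,\Theta_L(\beta)=\bigl(3v(\beta)-u(\beta)\bigr)\,\Theta_L'(\beta)
\end{equation*}
along $\alpha=0$.

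Finally I would integrate this ODE. Using the gauge $\beta=\lambda_1=\tfrac{3}{2}u$, one has $du/d\beta=2/3$, which reduces the ODE to $2\Theta_L=(3v-u)\Theta_L'$. Switching the independent variable from $\beta$ to $v$ via the rarefaction constraint $u=\eta_L v^{1/4}-v$ gives $du=\bigl(\tfrac{\eta_L}{4v^{3/4}}-1\bigr)dv$, while $3v-u=4v-\eta_L v^{1/4}=v^{1/4}(4v^{3/4}-\eta_L)$. The common factor $4v^{3/4}-\eta_L$ cancels and leaves the clean logarithmic derivative
\begin{equation*}
\frac{d(\ln \Theta_L)}{dv}=-\frac{3}{4v},
\end{equation*}
whose integration from the wave head ($v=v_L$, $\beta=\beta_L$) to a general $\beta$ yields $\Theta_L(\beta)=\Theta_L(\beta_L)(v_L/v(0,\beta))^{3/4}$, as claimed.

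The main obstacle I anticipate is the algebraic cancellation in the last step: one must verify that the particular form of the $1$-Riemann invariant $\eta=(u+v)v^{-1/4}$ makes the numerator $\tfrac{\eta_L}{4v^{3/4}}-1$ and the denominator $3v-u$ share the factor $4v^{3/4}-\eta_L$, so that the right-hand side becomes a pure power of $v$. Everything else is careful bookkeeping: correctly identifying which derivatives vanish in $(\alpha,\beta)$ coordinates within the fan, and respecting the gauge choice that identifies $\beta$ with the local $1$-characteristic speed.
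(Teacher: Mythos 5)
Your proposal follows essentially the same route as the paper: cross-differentiating the characteristic-coordinate relations \eqref{characteristic-coordinate}, reducing to the ODE $\Theta_L'=\frac{2}{3v-u}\Theta_L$ along $\alpha=0$, and integrating by switching the variable to $v$ via the invariant $\eta=(u+v)v^{-1/4}=\eta_L$; your final cancellation is correct and is exactly the paper's $\frac{du}{dv}=\frac14\left(\frac uv-3\right)$. The one questionable step is the claim that $v_\alpha=0$ throughout the fan: in the generalized Riemann problem the fan is curved, $\eta$ satisfies $\eta_\beta=0$ rather than $\eta_\alpha=0$, so $\eta=\eta_L$ holds only in the limit $\alpha\to 0$, and $v$ genuinely depends on $\alpha$ away from the singularity (its $\alpha$-derivative encodes the prescribed initial slopes). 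The term $\left(u_\alpha+\tfrac32 v_\alpha\right)\partial_\beta t$ still drops out, but for the paper's reason: $t(0,\beta)\equiv 0$ forces $\partial_\beta t(0,\beta)=0$, and one only needs $u_\alpha+\tfrac32 v_\alpha$ to remain bounded as $\alpha\to 0^+$. Your observation that $u_\alpha=0$ is correct, since $u$ is transported along the $\beta=\mathrm{const}$ curves, so the resulting ODE and its integration stand.
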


\begin{proof}
From \eqref{characteristic-coordinate},
\begin{align}
&\frac{\pt^2x}{\pt\alpha\pt\beta}=\frac 32\frac{\pt u}{\pt\beta}\frac{\pt t}{\pt\alpha}+\frac 32 u\frac{\pt^2 t}{\pt\alpha\pt\beta},\notag\\
&\frac{\pt^2x}{\pt\alpha\pt\beta}=\frac{\pt}{\pt\alpha}\left(u+\frac 32 v\right)\frac{\pt t}{\pt\beta}+\left(u+\frac 32 v\right)\frac{\pt^2 t}{\pt\alpha\pt\beta}.\notag
\end{align}
Subtracting the second equation from the first one yields
\begin{align}
& \left(\frac 32 v-\frac 12 u\right)\frac{\pt^2 t}{\pt\alpha\pt\beta}=-\frac{\pt t}{\pt\beta}\frac{\pt}{\pt\alpha}\left(u+\frac 32 v\right)+\frac 32\frac{\pt t}{\pt\alpha}\frac{\pt u}{\pt\beta}.\label{t_alphabeta}
\end{align}
We can set $t(0,\beta)=0$ and $\frac 32 \frac{\pt u}{\pt\beta}(0,\beta)=1$. Assume that $\lim_{\alpha\rw 0^+}\frac{\pt}{\pt\alpha}(u+\frac 32 v)(\alpha,\beta)$ is bounded, which implies
\begin{align}
\frac{\pt^2 t}{\pt\alpha\pt\beta}(0,\beta)=\frac{2}{3v-u}\frac{\pt t}{\pt\alpha}(0,\beta).\notag
\end{align}
With $\Theta_L(\beta) = \frac{\pt t}{\pt\alpha}(0,\beta)$, the above equation can be written as
\begin{align}
\frac{\pt}{\pt\beta}\Theta_L(\beta)=\frac{2}{3v-u}\Theta_L(\beta).\notag
\end{align}
This ODE can be solved as
\begin{align}
\frac{\Theta_L(\beta)}{\Theta_L(\beta_L)}=\exp\left(\int_{\beta_L}^{\beta}\frac{2}{3v-u}d\beta\right).
\end{align}
The Riemann variants of the 1-rarefaction are $\xi=f/b$, $\tau=g/q$ and $\eta=(fb+gq)/(gq)^{\frac 14}$.
Notice that if $\alpha\rw 0$, $\beta$ can be selected as $\beta=\frac{x}{t}=\frac{3}{2}u$. Further, differentiating the Riemann invariant $\eta(0,\beta)=(u+v)/(v)^{\frac 14}=\eta_L=\rm{const.}$ w.r.t. $v$ implies
\begin{align}
    \frac{du}{dv}=\frac 14\left(\frac{u}{v}-3\right). 
\end{align}
Thus, \eqref{theta} can be further simplified for $\alpha\rw 0$ as follows,
\begin{align*}
\displaystyle\int_{\beta_L}^{\beta}\dfrac{2}{3v-u} d\beta&= \displaystyle\int_{v_L}^{v(\beta)}\dfrac{3}{3v-u} \dfrac{u-3v}{4v} dv,\\
&=\displaystyle\int_{v_L}^{v(\beta)}-\dfrac{3}{4v} dv= \ln \left(\dfrac{v_{L}}{v(0,\beta)}\right)^{3/4}.
\end{align*}
Then 
\begin{align}
\Theta_L(\beta)=\Theta_L(\beta_L)\left(\dfrac{v_{L}}{v(0,\beta)}\right)^{3/4}.\label{thetal1}
\end{align}
\end{proof}
\begin{remark}
    Proposition \ref{prop-singularity} characterizes how the rarefaction wave expands near the singularity point in terms of the characteristic coordinate $\alpha$. Furthermore, $v=gq$ solely determines the degree of expansion for the curved fan. 
\end{remark}
The total derivatives of $(f, b, g, q)$ are functions of $(\alpha, \beta)$ throughout the centered rarefaction wave. A key ingredient in the resolution of the centered (left) rarefaction wave is the fact that their limiting values as $\alpha \rw 0$, satisfy a system of linear equations. We express this in the following lemma.
\begin{lemma}\label{l-3.1}
Across the 1-rarefaction wave, the trace vector
\[\dfrac{\partial\mathbf{U}}{\partial t}(0, \beta)=\left[
    \dfrac{\partial f}{\partial t},
    \dfrac{\partial  b}{\partial 
 t},
    \dfrac{\partial  g}{\partial 
 t},
    \dfrac{\partial  q}{\partial  t}\right]^\top(0, \beta)\]
    satisfies the following linear equations 
\begin{align}\label{Linear_system_R_2}
    \mathbf{A_{L}}(0, \beta) \dfrac{\partial \mathbf{U}}{\partial t}(0, \beta)= \mathbf{D_{L}}(0, \beta), \quad\forall \beta\in[\beta_{L}, \beta^*_L],
\end{align}
where
\begin{align}
    \mathbf{A_{L}}(0, \beta)=\begin{bmatrix}
        \dfrac{1}{b}(0, \beta) & -\dfrac{\xi}{b}(0, \beta)& 0 &0\\
        0&0& \dfrac{1}{q}(0, \beta)& -\dfrac{\tau}{q}(0, \beta)\\
        b(0, \beta)&f(0, \beta)&\left(\dfrac{3v-u}{4g}\right) (0, \beta)& \left(\dfrac{3v-u}{4q}\right) (0, \beta)  
    \end{bmatrix}, \quad 
\end{align}
and 
\begin{align}
    \mathbf{D_{L}}(0, \beta)=\begin{bmatrix}
        \left(\dfrac{u(0, \beta)}{u_{L}}\right)^{3/2}\left(\dfrac{\partial \xi}{\partial t}\right)_{L}\vspace{0.2 cm}\\
   \dfrac{\Upsilon_L^{\tau}(\beta ;\beta_L)}{2u_L+v_L} \left(\dfrac{\partial \tau}{\partial t}\right)_{L}\left(\dfrac{2u+v}{u-v}\right)(0, \beta)\vspace{0.2 cm}\\
   \dfrac{3v_L-u_L}{(2u_L+3v_L)v_L^{3/4}}\left(\dfrac{\partial \eta}{\partial t}\right)_{L}\left(\dfrac{(2u+3v)v^{3/4}}{3v-u}\right)(0, \beta)
    \end{bmatrix}
\end{align}
with $u=fb$, $v=gq$, $\xi=f/b$, $\tau=g/q$, $\eta=(fb+gq)/(gq)^{1/4}$, and 
\begin{align}\label{eq: gamma_tau}
&\Upsilon_L^{\tau}(\beta;\beta_L)=v^{3/4}(0, \beta)v_L^{-1/2}\bigg(v_L^{-1/4}(u_L+v_L)-2v^{3/4}(0, \beta)\bigg).
\end{align}
\end{lemma}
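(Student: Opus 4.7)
The plan is to derive the three rows of \eqref{Linear_system_R_2} independently, each corresponding to one of the three Riemann invariants $\xi$, $\tau$, $\eta$ that are preserved across the 1-rarefaction wave. The left-hand side (the matrix $\mathbf{A_L}$) arises from pure chain-rule manipulations, while the right-hand side (the vector $\mathbf{D_L}$) requires a more delicate analysis using characteristic coordinates and integration of suitable ODEs in $\beta$ along the fan.

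For the matrix $\mathbf{A_L}$, row $1$ is obtained by differentiating $\xi = f/b$ in time, which gives $(1/b)\partial_t f - (\xi/b)\partial_t b = \partial_t\xi$; row $2$ follows analogously from $\tau = g/q$. For row $3$, I start from $\eta = (u+v)v^{-1/4}$ with $u = fb$ and $v = gq$, and a short computation shows that $v^{1/4}\partial_t\eta = \partial_t u + (3v-u)/(4v)\,\partial_t v$. Substituting $\partial_t u = b\partial_t f + f\partial_t b$ and $\partial_t v = q\partial_t g + g\partial_t q$, and using $v = gq$ to simplify, reproduces row $3$ of $\mathbf{A_L}$. Thus $\mathbf{A_L}$ is a compact encoding of $\partial_t\xi$, $\partial_t\tau$, and $v^{1/4}\partial_t\eta$ in terms of the conservative time derivatives.

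For the vector $\mathbf{D_L}$, the strategy is to establish, for each $w \in \{\xi, \tau, \eta\}$, an explicit formula for the trace $\partial_t w(0, \beta)$ as a function of its left boundary value $\partial_t w_L$ and the flow variables $u$, $v$ at the sonic line. The starting point is the diagonal system \eqref{Rtransformed}: applying $\partial_\alpha$ to $w_t + \mu w_x = 0$ (with $\mu = u/2$, $u+v/2$, or $u+3v/2$ for $w = \xi, \tau, \eta$ respectively) and converting to characteristic coordinates via \eqref{characteristic-coordinate} yields, after passing to the limit $\alpha \rw 0^+$, a first-order linear ODE in $\beta$ whose coefficients depend only on $u$ and $v$. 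The boundary value at $\beta = \beta_L$ is $\partial_t w_L$, since the wave head traces back to the left initial data. For $w = \xi$ the integration is straightforward and produces the factor $(u/u_L)^{3/2}$ (reflecting $\beta = 3u/2$ together with the eigenvalue difference $\lambda_2 - \lambda_1 = -u$), which yields row $1$ of $\mathbf{D_L}$. For $w = \tau$ and $w = \eta$, the integration additionally involves the dependence of $v$ on $\beta$, governed along the 1-rarefaction by the Riemann invariant relation $(u+v)v^{-1/4} = \eta_L$, and produces the more elaborate expressions in rows $2$ and $3$; the denominators $u-v$ and $3v-u$ appearing there are proportional to $\lambda_3-\lambda_1$ and $\lambda_4-\lambda_1$ respectively.

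The hardest part is the derivation and integration of the ODEs for $\partial_t\tau(0,\beta)$ and $\partial_t\eta(0,\beta)$. Unlike the $\xi$ case, these Riemann invariants have transport velocities involving $v$, and the non-conservative evolution \eqref{simeqn2} of $v$ couples into the analysis. The key ingredient for decoupling is the relation $du/dv = (u-3v)/(4v)$, obtained by differentiating $(u+v)v^{-1/4} = \eta_L$ exactly as in the proof of Proposition \ref{prop-singularity}. This allows $u$ to be eliminated in favor of $v$ and the integrating factors to be evaluated in closed form; the auxiliary function $\Upsilon_L^\tau$ emerges naturally from this step as the geometric quantity that encodes the accumulated $v$-dependence along the fan. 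Once the three explicit formulas for $\partial_t\xi(0,\beta)$, $\partial_t\tau(0,\beta)$, and $v^{1/4}\partial_t\eta(0,\beta)$ are in hand, combining them with the chain-rule identities from the second paragraph produces exactly the linear system \eqref{Linear_system_R_2}.
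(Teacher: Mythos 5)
Your proposal is correct and follows essentially the same route as the paper: the rows of $\mathbf{A_L}$ come from chain-rule expansion of $\partial_t\xi$, $\partial_t\tau$, and $v^{1/4}\partial_t\eta$, while the entries of $\mathbf{D_L}$ are obtained by passing to characteristic coordinates, deriving an ODE in $\beta$ for each invariant's $\alpha$-derivative as $\alpha\to 0^+$, and integrating it in closed form via the relation $du/dv=(u-3v)/(4v)$ implied by the constancy of $\eta$ across the fan. This is precisely the paper's argument, so no further comparison is needed.
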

\begin{proof}
 In order to obtain the linear system for the time derivatives of the conservative variables $\mathbf{U}=(f, b, g, q)^{\top}$ across the rarefaction wave between $\bU_L$ and $\bU^*_L$, we will use the reformulated system \eqref{Rtransformed}. Note that the Riemann invariants $\xi=f/b, \tau=g/q $ and $\eta=(fb+gq)/(gq)^{1/4}$ remain regular across the 1-rarefaction wave, which helps us to obtain the linear system. 
 
 To derive the relation between the time derivatives of $\bU_L$ and $\bU^*_L$, let us take $\xi$ as an example.
The second equation of \eqref{Rtransformed} is rewritten as
\begin{align}
\frac{\pt\xi}{\pt t}+\frac 32 u\frac{\pt\xi}{\pt x} = u\frac{\pt\xi}{\pt x},\ 
\frac{\pt\xi}{\pt t}+\left(u+\frac 32 v\right)\frac{\pt\xi}{\pt x} = \left(\frac 12u+\frac 32v\right)\frac{\pt\xi}{\pt x}.\notag
\end{align}
By changing  to the characteristic coordinates $(x,t)\mapsto (\alpha,\beta)$ for $\xi$, we get
\begin{align}
&\frac{\pt\xi}{\pt\alpha}=\frac{\pt t}{\pt\alpha}\left(\frac{\pt\xi}{\pt t}+\frac 32 u\frac{\pt\xi}{\pt x}\right)=u\frac{\pt t}{\pt\alpha}\frac{\pt\xi}{\pt x},\label{xi1}\\
&\frac{\pt\xi}{\pt\beta}=\frac{\pt t}{\pt\beta}\left(\frac{\pt\xi}{\pt t}+\left(u+\frac 32 v\right)\frac{\pt\xi}{\pt x}\right)=\frac 12 (u+3v)\frac{\pt t}{\pt\beta}\frac{\pt\xi}{\pt x}.\label{xi2}
\end{align}
The second formula is differentiated with respect to $\alpha$. It arrives at 
\begin{align}
\frac{\pt^2\xi}{\pt\alpha\pt\beta}=&\frac 12(u+3v)\frac{\pt^2 t}{\pt\alpha\pt\beta}
\frac{\pt\xi}{\pt x}+\frac 12\frac{\pt t}{\pt \beta}\frac{\pt}{\pt\alpha}\left((u+3v)\frac{\pt\xi}{\pt x}\right).\notag
\end{align}
Letting $\alpha\rw 0^+$ and using the same strategy as for $\Theta_L(\beta)$ in the proof of Proposition \ref{prop-singularity}, an ODE is derived for $\dfrac{\pt \xi}{\pt\alpha}(0,\beta)$ as
\begin{align}
\frac{\pt}{\pt\beta}\left(\frac{\pt \xi}{\pt\alpha}(0,\beta)\right)&=\frac{u+3v}{3v-u}(0,\beta)\frac{\pt t}{\pt\alpha}(0,\beta)\frac{\pt \xi}{\pt x}(0,\beta)\notag\\
&=\frac{u+3v}{u(3v-u)}(0,\beta)\frac{\pt \xi}{\pt\alpha}(0,\beta).\notag
\end{align}
Then we have
\begin{align}
&\frac{\pt \xi}{\pt\alpha}(0,\beta)=\frac{\pt \xi}{\pt\alpha}(0,\beta_L)\Theta_L(\beta)\Upsilon_L^{\xi}(\beta;\beta_L),\notag
\end{align}
where
\begin{align}
\Upsilon_L^{\xi}(\beta;\beta_L)&:=\exp\left(\int_{\beta_L}^{\beta}\frac{1}{u} d\beta\right)=\left(\frac{\beta}{\beta_L}\right)^{\frac 32}=\left(\frac{u(0,\beta)}{u_L}\right)^{\frac 32}.\notag
\end{align}
We go back to the frame $(x, t)$ by using \eqref{xi1} and obtain
\begin{align}
\frac{\pt\xi}{\pt x}(0,\beta)=\frac{\pt}{\pt x}\left(\frac{f}{b}\right)(0,\beta)
&=\frac{u_L}{u(0,\beta)}\left(\frac{\pt\xi}{\pt x}\right)_{L} \Upsilon_L^{\xi}(\beta;\beta_L)=\left(\frac{u(0,\beta)}{u_L}\right)^{1/2}\left(\frac{\pt\xi}{\pt x}\right)_{L}. 
\end{align}
Similar results can be derived for $\tau$ and $\eta$ and lead to
\begin{align}
&\frac{\pt\tau}{\pt x}(0,\beta)=\frac{\pt}{\pt x}\left(\frac{g}{q}\right)(0,\beta)=\frac{1}{(u-v)(0,\beta)}\left(\frac{\pt\tau}{\pt x}\right)_L\Upsilon_L^{\tau}(\beta;\beta_L),\notag\\
&\frac{\pt\eta}{\pt x}(0,\beta)=\frac{\pt}{\pt x}\left(\dfrac{fb+gq}{(gq)^{\frac 14}}\right)(0,\beta)=\frac{(u-3v)_L}{(u-3v)(0,\beta)}\left(\frac{\pt\eta}{\pt x}\right)_{L}\left(\dfrac{v(0, \beta)}{v(0,\beta_{L})}\right)^{3/4},\notag
\end{align}
where
\begin{align}
&\Upsilon_L^{\tau}(\beta;\beta_L):=(u-v)_L\exp\left(\int_{\beta_L}^{\beta}\frac{2}{(u-v)}d\beta\right).\notag
\end{align}
In view of the Riemann invariant $\eta(0,\beta)=(u+v)/(v)^{\frac 14}=\eta_L=\rm{const.}$, we have
\begin{align}
    \frac{du}{dv}=\frac 14\left(\frac{u}{v}-3\right). 
\end{align}
Therefore, we directly compute
\begin{align*}
\displaystyle\int_{\beta_L}^{\beta}\dfrac{2}{u-v}~ d\beta&= \displaystyle\int_{v_L}^{v(\beta)}\dfrac{3}{u-v} \left(\dfrac{u-3v}{4v}\right) dv
=\displaystyle\int_{v_L}^{v(\beta)}\left(\dfrac{3}{4v}-\dfrac{3}{2(u-v)}\right) dv.
\end{align*}
In view of $u+v=\eta_L v^{1/4}$, we have $u-v=\eta_L v^{1/4}-2v$ and therefore
\begin{align*}
\displaystyle\int_{\beta_L}^{\beta}\dfrac{2}{u-v}~ d\beta&=
\displaystyle\int_{v_L}^{v(\beta)}\left(\dfrac{3}{4v}-\dfrac{3}{2v^{1/4}(\eta_L -2v^{3/4})}\right) dv\\
&=\left[\ln v^{3/4}+\ln (\eta_L-2v^{3/4})\right]_{v_L}^{v}\\
&= \ln \left(\dfrac{v^{3/4}(u_L+v_L-2v^{3/4}(v_L)^{1/4})}{(v_L)^{3/4}(u_L-v_L)}\right).
\end{align*}
Thus we have
\begin{align}
&\Upsilon_L^{\tau}(\beta;\beta_L)=v^{3/4}v_L^{-1/2}[v_L^{-1/4}(u_L+v_L)-2v^{3/4}].\notag
\end{align}
From these relations and in view of the system \eqref{Rtransformed}, we obtain the time derivatives of $\xi, \tau$ and $\eta$ given by
\begin{align}
    \left(\dfrac{\partial\xi}{\partial t}\right)(0, \beta)&=-\dfrac{1}{2} \left(u\dfrac{\partial \xi}{\partial x}\right)(0, \beta)=\left(\dfrac{u(0, \beta)}{u_L}\right)^{3/2} \left(\dfrac{\partial \xi}{\partial t}\right)_L,\label{eq: 3.17}\\ 
    \left(\dfrac{\partial\tau}{\partial t}\right)(0, \beta)&=-\dfrac{1}{2}\left((2u+v)\dfrac{\partial \tau}{\partial x}\right)(0, \beta)\nonumber\\
    &=\left(\dfrac{2u+v}{u-v}\right)(0, \beta)\dfrac{\Upsilon_{L}^\tau(\beta; \beta_{L})}{2u_L+v_L}\left(\dfrac{\partial \tau}{\partial t}\right)_L\label{eq: 3.18},\\ 
    \left(\dfrac{\partial\eta}{\partial t}\right)(0, \beta)&=-\dfrac{1}{2}\left((2u+3v)\dfrac{\partial \eta}{\partial x}\right)(0, \beta)\nonumber\\ 
    &=\left(\dfrac{2u+3v}{3v-u}\right)(0, \beta) \frac{3v_L-u_L}{2u_L+3v_L}\left(\dfrac{v(0, \beta)}{v_L}\right)^{3/4}\left(\frac{\pt\eta}{\pt t}\right)_L.\label{eq: 3.19}
\end{align}
Simplifying \eqref{eq: 3.17}, \eqref{eq: 3.18}, and \eqref{eq: 3.19}, we obtain the linear system \eqref{Linear_system_R_2} for the time derivatives of the conservative variables across the 1-rarefaction wave.
\end{proof}

\begin{remark}
    In \eqref{Linear_system_R_2}, $\bU_L$ is known as initial data and $\bU(0,\beta)$ is solved by the associated Riemann problem. $\left(\frac{\partial \xi}{\partial t}\right)_L$, $\left(\frac{\partial \tau}{\partial t}\right)_L$ and $\left(\frac{\pt\eta}{\pt t}\right)_L$ are obtained by the Lax-Wendroff procedure with the initial data of the generalized Riemann problem as follows
    \begin{align}\label{eq: left_derivs}
&\left(\dfrac{\partial \xi}{\partial t}\right)_{L} = -\frac 12 u_L\left(\dfrac{\partial \xi}{\partial x}\right)_{L} = -\frac 12f_L\left(f_L^{\prime}-\frac{f_L}{b_L}b_L^{\prime}\right),\\
&\left(\dfrac{\partial \tau}{\partial t}\right)_{L} = -\left(u_L+\frac 12v_L\right)\left(\dfrac{\partial \tau}{\partial x}\right)_{L} = -\left(u_L+\frac 12v_L\right)\left(\frac{g_L^{\prime}}{q_L}-\frac{g_L}{q_L^2}q_L^{\prime}\right),\\
&\left(\dfrac{\partial \eta}{\partial t}\right)_{L} = -\left(u_L+\frac 32v_L\right)\left(\dfrac{\partial \eta}{\partial x}\right)_{L} = -\left(u_L+\frac 32v_L\right)v_L^{-\frac 14}\left(u_L^{\prime}-\frac 14\left(\frac{u_L}{v_L}+1\right)v_L^{\prime}\right).
\end{align}
\end{remark}
\begin{remark}
The linear system \eqref{Linear_system_R_2} is a linear system for time derivatives across the whole curved 1-rarefaction wave. In particular, for $\beta=\beta^*_L$, we obtain the linear system for $(\partial \mathbf{U}/\partial t)^*_L$.    
\end{remark}
\begin{remark}
    When the $t$-axis ($x = 0$) is located inside the rarefaction fan associated with the characteristic family $\frac 32u$,  we have the sonic case. However, in the state space \eqref{statespace}, $u$ satisfies $u=fb<0$. Therefore, the sonic case will not occur for the first rarefaction wave.
\end{remark}
\subsection{The resolution of contact waves}
Let's take the second contact wave as an example. Considering the Riemann invariants $g$, $q$ and $u=fb$, we have
\begin{align}
&\left(\frac{\partial g}{\partial t}\right)^*_{_M} = \left(\frac{\partial g}{\partial t}\right)^*_{_L},\notag\\
&\left(\frac{\partial q}{\partial t}\right)^*_{_M} = \left(\frac{\partial q}{\partial t}\right)^*_{_L},\notag\\
&\left(\frac{\partial u}{\partial t}\right)^*_{_M} = \left(\frac{\partial u}{\partial t}\right)^*_{_L}.\notag
\end{align}
Thus, the following statement is derived.
\begin{lemma}\label{l-3.2}
Consider a 2-contact wave according to the \ref{GRP-pic}. The vector of the time derivatives of conservative variables 
\[\left(\dfrac{\partial \mathbf{U}}{\partial t}\right)^*_{_M}=\left[
    \left(\dfrac{\partial f}{\partial t}\right)^*_{_M}, ~
    \left(\dfrac{\partial  b}{\partial 
 t}\right)^*_{_M}, ~
    \left(\dfrac{\partial  g}{\partial 
 t}\right)^*_{_M}, ~
    \left(\dfrac{\partial  q}{\partial  t}\right)^*_{_M}
    \right]^{\top}\]
across a 2-contact wave satisfies the following linear system of equations 
\begin{align}\label{Linear_system_C_1}
    \mathbf{A^*_M} \left(\dfrac{\partial \mathbf{U}}{\partial t}\right)^*_{_M}= \mathbf{D^*_L}.
\end{align}
In \eqref{Linear_system_C_1} we denote
\begin{align*}
    \mathbf{A^*_M}=\begin{bmatrix}
        b^*_M, & f^*_M& 0 &0\\
        0&0& g^*_M&0\\
        0&0& 0&q^*_M  
    \end{bmatrix}, ~ \text{and}  ~\mathbf{D^*_L}=\left[
        \left(\dfrac{\pt u}{\pt t}\right)^*_{_L},~
        \left(\dfrac{\pt g}{\pt t}\right)^*_{_L},~
        \left(\dfrac{\pt q}{\pt t}\right)^*_{_L}
    \right]^{\top}.
\end{align*}
Similarly, across the 3-contact wave, the vector of the time derivatives of the conservative variables \[
\left(\dfrac{\partial \mathbf{U}}{\partial t}\right)^*_{_R}=\left[
    \left(\dfrac{\partial f}{\partial t}\right)^*_{_R}, ~
    \left(\dfrac{\partial  b}{\partial 
 t}\right)^*_{_R}, ~
    \left(\dfrac{\partial  g}{\partial 
 t}\right)^*_{_R}, ~
    \left(\dfrac{\partial  q}{\partial  t}\right)^*_{_R}
    \right]^{\top}
    \]
    satisfies the following linear system of equations 
\begin{align}\label{Linear_system_C_2}
    \mathbf{A^*_R}\left(\dfrac{\partial \mathbf{U}}{\partial t}\right)^*_{_R}= \mathbf{D^*_M},
\end{align}
where
\begin{align*}
    \mathbf{A^*_R}=\begin{bmatrix}
        f^*_R, & 0& 0 &0\\
        0& b^*_R&0& 0\\
        0&0& q^*_R  &g^*_R  
    \end{bmatrix}, ~\text{and }\mathbf{D^*_M}=\left[
        \left(\dfrac{\pt f}{\pt t}\right)^*_{_M},~
        \left(\dfrac{\pt b}{\pt t}\right)^*_{_M},~
        \left(\dfrac{\pt v}{\pt t}\right)^*_{_M}
    \right]^{\top}.
\end{align*}
\end{lemma}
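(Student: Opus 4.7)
The plan is to exploit the linear degeneracy of the 2nd and 3rd characteristic fields of \eqref{eq: Main_system}, which forces all Riemann invariants of those fields to be continuous across the associated contacts $J_2$ and $J_3$. From \eqref{Rinvariants} I would take as a complete set of 2-Riemann invariants $\{g,\,q,\,u=fb\}$ and a complete set of 3-Riemann invariants $\{f,\,b,\,v=gq\}$; each can be verified directly by checking $\nabla w\cdot\bR_k=0$ against the right eigenvectors in \eqref{righteigenvectors}. Each family will then supply three scalar equations at the singularity, which together with the product rule assemble into \eqref{Linear_system_C_1} and \eqref{Linear_system_C_2}.

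For the 2-contact I would first observe that the Rankine-Hugoniot relations \eqref{eq: 23a} guarantee that $g$, $q$, and $u$ take common values on both sides of the curve $x=x^{(2)}(t)$ for every small $t>0$, and that the contact speed $\sigma_2=u^*/2$ is unambiguous. Because the GRP solution is smooth in each of the two intermediate sectors $L^*$ and $M^*$ up to this curve, the standard Ben-Artzi--Li consistency argument at the singularity gives
\[
\left(\frac{\pt g}{\pt t}\right)^*_{_M}=\left(\frac{\pt g}{\pt t}\right)^*_{_L},\quad \left(\frac{\pt q}{\pt t}\right)^*_{_M}=\left(\frac{\pt q}{\pt t}\right)^*_{_L},\quad \left(\frac{\pt u}{\pt t}\right)^*_{_M}=\left(\frac{\pt u}{\pt t}\right)^*_{_L}.
\]
The first two are already the $g$- and $q$-rows of \eqref{Linear_system_C_1}. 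For the third, I expand via the product rule $\pt u/\pt t=b\,\pt f/\pt t+f\,\pt b/\pt t$ at the $M$-state to obtain $b^*_M(\pt f/\pt t)^*_{_M}+f^*_M(\pt b/\pt t)^*_{_M}=(\pt u/\pt t)^*_{_L}$, which is precisely the first row of the claimed matrix equation.

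The 3-contact is treated in complete analogy, with $\{f,\,b,\,v=gq\}$ playing the role that $\{g,\,q,\,u\}$ played before. Continuity of these three invariants across $J_3$, combined with smoothness in $M^*$ and $R^*$, yields $(\pt f/\pt t)^*_{_R}=(\pt f/\pt t)^*_{_M}$, $(\pt b/\pt t)^*_{_R}=(\pt b/\pt t)^*_{_M}$, and $(\pt v/\pt t)^*_{_R}=(\pt v/\pt t)^*_{_M}$. The last, expanded by the product rule $\pt v/\pt t=q\,\pt g/\pt t+g\,\pt q/\pt t$ at the $R$-state, is exactly the third row of \eqref{Linear_system_C_2}, completing the system.

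The main obstacle is the rigorous justification of the time-derivative equalities displayed above: differentiating the pointwise continuity $w(\bU^-(t))=w(\bU^+(t))$ along the curved contact only delivers that the combination $w_t^\pm+\sigma_k\,w_x^\pm$ is common to the two sides. To isolate the $w_t$-limits themselves one must further use the diagonal transport structure \eqref{Rtransformed} inside each smooth sector to control the one-sided spatial derivatives as $t\to 0^+$; this is the familiar technical step in the classical Euler GRP \cite{Ben-Artzi-1984,Ben-Artzi-2003} and adapts here thanks to the global coordinate system $(u,\xi,\tau,\eta)$ of Riemann invariants in \eqref{selectedinvariants}.
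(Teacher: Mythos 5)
Your proposal is correct and follows essentially the same route as the paper: the paper's entire argument consists of asserting the continuity of the 2-Riemann invariants $g$, $q$, $u=fb$ (respectively $f$, $b$, $v=gq$ for $J_3$) across the linearly degenerate contact, concluding equality of the instantaneous time derivatives of these invariants on the two sides, and expanding $\left(\partial u/\partial t\right)$ and $\left(\partial v/\partial t\right)$ by the product rule to obtain the rows of $\mathbf{A^*_M}$ and $\mathbf{A^*_R}$. Your closing remark that passing from the directional derivative $w_t+\sigma_k w_x$ along the contact to the pure time derivative requires the transport structure of \eqref{Rtransformed} is a point the paper glosses over entirely, so you are, if anything, more careful than the original.
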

\subsection{The resolution of the 4-shock wave}
In this section, we follow the idea of \cite{ben2006direct} to resolve the shock at the origin. The key part is to apply the Rankine-Hugoniot condition of the shock wave \eqref{eq: 24b} and use the continuity property of solutions adjacent to the shock front. 

Inherently, we make
differentiation along the shock trajectory $x = x(t)$. The Rankine-Hugoniot condition of $4$-shock wave implies that
\begin{align}
    \Gamma_1:=f-f_R=0,~\Gamma_2:=b-b_R=0,~\Gamma_3:=\dfrac{{g}}{{q}}-\dfrac{{g_R}}{{q_R}}=0,
\end{align}
which is also equivalent to
\begin{align}
    \Gamma_1^{\prime}:=\xi-\xi_R=0,~\Gamma_2^{\prime}:=u-u_R=0,~\Gamma_3:=\dfrac{{g}}{{q}}-\dfrac{{g_R}}{{q_R}}=0.
\end{align}
Differentiating along $\Gamma_1^{\prime}$, $\Gamma_2^{\prime}$ and $\Gamma_3$, we have
\begin{align}
&\frac{D_{\sigma_4}\Gamma_1^{\prime}}{dt}=\left(\frac{\pt \xi}{\pt t}\right)^*_{_R}+\sigma_4\left(\frac{\pt \xi}{\pt x}\right)^*_{_R}-\left(\frac{\pt \xi}{\pt t}\right)_R-\sigma_4\left(\frac{\pt \xi}{\pt x}\right)_R=0,\label{4s-1}\\
&\frac{D_{\sigma_4}\Gamma_2^{\prime}}{dt}=\left(\frac{\pt u}{\pt t}\right)^*_{_R}+\sigma_4\left(\frac{\pt u}{\pt x}\right)^*_{_R}-\left(\frac{\pt u}{\pt t}\right)_R-\sigma_4\left(\frac{\pt u}{\pt x}\right)_R=0,\label{4s-2}\\
&\frac{D_{\sigma_4}\Gamma_3}{dt}=\left(\frac{\pt \tau}{\pt t}\right)^*_{_R}+\sigma_4\left(\frac{\pt \tau}{\pt x}\right)^*_{_R}-\left(\frac{\pt \tau}{\pt t}\right)_R-\sigma_4\left(\frac{\pt \tau}{\pt x}\right)_R=0,\label{4s-3}
\end{align}
where $\frac{D_{\sigma_4}}{dt}=\frac{\pt }{\pt t}+\sigma_4\frac{\pt}{\pt x}$ and $\sigma_4$ is the shock speed of the 4-shock wave as $t\rw 0$. 
The key point here is based on the Lax-Wendorff procedure by using the equations that express the unknown temporal derivatives $\left(\frac{\pt \omega}{\pt t}\right)_R$ by known spatial derivatives $\left(\frac{\pt \omega}{\pt x}\right)_R$ and the unknown $(\frac{\pt \omega}{\pt x})^*_R$ by $(\frac{\pt \omega}{\pt t})^*_R$, $\omega\in \{\xi,u,\tau\}$. Then a linear system for $(\frac{\pt f}{\pt t})^*_R$, $(\frac{\pt b}{\pt t})^*_R$ and $(\frac{\pt \tau}{\pt t})^*_R$ is derived. Precisely, we have the following lemma.
\begin{lemma}\label{l-3.3}
Across the 4-shock wave, the vector of the time derivatives of conservative variables 
\[\left(\dfrac{\partial \mathbf{U}}{\partial t}\right)^*_{_R}=\left[
    \left(\dfrac{\partial f}{\partial t}\right)^*_{_R}, ~
    \left(\dfrac{\partial  b}{\partial 
 t}\right)^*_{_R}, ~
    \left(\dfrac{\partial  g}{\partial 
 t}\right)^*_{_R}, ~
    \left(\dfrac{\partial  q}{\partial  t}\right)^*_{_R}
    \right]^{\top}
    \]
    satisfies a linear system of equations  given by
\begin{align}\label{Linear_system_S_1}
    \mathbf{A_R} \left(\dfrac{\partial \mathbf{U}}{\partial t}\right)^*_{_R}= \mathbf{D_{R}}.
\end{align}
Here we have
\begin{align}
    \mathbf{A_R}=\begin{bmatrix}
        b_{R} & f_{R}& 0 &0\\
        \dfrac{1}{b_R}&-\dfrac{f_R}{(b_R)^2}& 0 & 0\\
        0&0& \dfrac{1}{q^*_R}&-\dfrac{g^*_R}{(q^*_R)^2}  
    \end{bmatrix}, \quad 
\end{align}
and 
\begin{align}
    \mathbf{D_{R}}=
       \left[-\dfrac{3u_R}{2}u^{\prime}_R, \quad
       -\dfrac{u_R}{2}\xi^{\prime}_R, \quad
        \frac{(2 u^*_R+v^*_R)}{2}\left(\dfrac{v^*_R}{v_R}\right)^{1/2}\tau_R^{\prime}
    \right]^\top.
\end{align}  
\end{lemma}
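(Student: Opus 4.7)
The plan is to translate the three scalar identities \eqref{4s-1}--\eqref{4s-3}, which encode the Rankine-Hugoniot conditions of the 4-shock, into a linear system for the four conservative time derivatives $(\partial \mathbf{U}/\partial t)^*_{_R}$, by combining the Lax-Wendroff procedure on both sides of the shock with the chain rule applied to $u=fb$, $\xi=f/b$, $\tau=g/q$. The diagonal form \eqref{Rtransformed} plays the central role, because along each characteristic family it supplies an algebraic link between $\partial_t\omega$ and $\partial_x\omega$ for $\omega\in\{\xi,u,\tau\}$ at any smooth point of the solution.

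First I would work at the right state $\mathbf{U}_R$ and use \eqref{Rtransformed} to eliminate the unknown temporal derivatives in favour of the prescribed initial slopes:
\[
\left(\tfrac{\partial u}{\partial t}\right)_R=-\tfrac{3u_R}{2}u_R',\quad
\left(\tfrac{\partial \xi}{\partial t}\right)_R=-\tfrac{u_R}{2}\xi_R',\quad
\left(\tfrac{\partial \tau}{\partial t}\right)_R=-\bigl(u_R+\tfrac{v_R}{2}\bigr)\tau_R'.
\]
Symmetrically, at $\mathbf{U}^*_{_R}$ I would use the same PDEs (now evaluated with $u^*_R$ and $v^*_R$) to replace the unknown spatial derivatives $(\omega_x)^*_{_R}$ by $(\omega_t)^*_{_R}$. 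Substituting both reductions into \eqref{4s-1}--\eqref{4s-3} decouples them into three independent scalar equations, one for each of $(\xi_t)^*_{_R}$, $(u_t)^*_{_R}$ and $(\tau_t)^*_{_R}$, with coefficients involving $\sigma_4$, $u_R$, $u^*_R$, $v_R$ and $v^*_R$.

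Next I would simplify those coefficients using the Rankine-Hugoniot relations for the 4-shock from \eqref{eq: 24b}: $f^*_R=f_R$, $b^*_R=b_R$ (so $u^*_R=u_R$ and $\xi^*_R=\xi_R$) and $g^*_R/q^*_R=g_R/q_R$ (so $v^*_R/v_R=(g^*_R/g_R)^2$), together with the closed form $\sigma_4=u_R+q_R((g^*_R)^2+g^*_R g_R+g_R^2)/(2g_R)$. The factorization $(g^*_R)^2+g^*_R g_R=g^*_R(g^*_R+g_R)$ should produce a clean cancellation, after which the three scalar equations reduce exactly to the entries of $\mathbf{D_R}$. Then, writing $(u_t)^*_{_R}=b^*_R(f_t)^*_{_R}+f^*_R(b_t)^*_{_R}$, $(\xi_t)^*_{_R}=(b^*_R)^{-1}(f_t)^*_{_R}-f^*_R(b^*_R)^{-2}(b_t)^*_{_R}$ and $(\tau_t)^*_{_R}=(q^*_R)^{-1}(g_t)^*_{_R}-g^*_R(q^*_R)^{-2}(q_t)^*_{_R}$, and using $f^*_R=f_R$, $b^*_R=b_R$ on the first two, one reads off the three rows of $\mathbf{A_R}$ and obtains the stated linear system.

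The main obstacle is the algebraic reduction in the $\tau$-equation: it requires verifying that $2u^*_R+v^*_R-2\sigma_4$ and $2\sigma_4-2u_R-v_R$ are, up to a common factor $q_R(g^*_R+g_R)$, in the ratio $-(g^*_R/g_R)$, which is precisely what produces the factor $(v^*_R/v_R)^{1/2}$ appearing in $\mathbf{D_R}$. By contrast, the $u$- and $\xi$-equations collapse easily, because $u^*_R=u_R$ causes the $\sigma_4$-dependent coefficients to cancel exactly; this reflects the fact that $u$ and $\xi$ are preserved across the 4-shock and that only the Temple-type variable $\tau$ interacts nontrivially with the shock curvature.
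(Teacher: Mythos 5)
Your proposal follows essentially the same route as the paper's proof: differentiate the Rankine--Hugoniot identities \eqref{4s-1}--\eqref{4s-3} along the shock trajectory, use the diagonal form \eqref{Rtransformed} to trade the unknown spatial derivatives at $\mathbf{U}^*_R$ for temporal ones and the temporal derivatives at $\mathbf{U}_R$ for the given slopes, exploit $u^*_R=u_R$ to collapse the $\xi$- and $u$-equations, reduce the $\tau$-equation via the explicit shock speed from \eqref{eq: 24b}, and finally expand $\xi_t,u_t,\tau_t$ by the chain rule to read off the rows of $\mathbf{A_R}$. Your explicit identification of the common factor $q_R(g^*_R+g_R)$ in $2\sigma_4-2u_R-v_R$ and $2u^*_R+v^*_R-2\sigma_4$ is precisely the cancellation the paper asserts without detail in passing to the final form of \eqref{taurelations}, so the proposal is correct and matches the paper's argument.
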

\begin{proof}
In view of \eqref{4s-1}-\eqref{4s-2} with $t\rw 0$ and \eqref{Rtransformed}, we have
\begin{align*}
     \left(\frac{\pt \xi}{\pt t}\right)^*_{_R}\left(1-\dfrac{2\sigma_4}{u^*_R}\right)=-\dfrac{u_R}{2}\left(1-\dfrac{2\sigma_4}{u_R}\right)\left(\frac{\pt \xi}{\pt x}\right)_R
\end{align*}
and 
\begin{align*}
     \left(\frac{\pt u}{\pt t}\right)^*_{_R}\left(1-\dfrac{2\sigma_4}{3u^*_R}\right)=-\dfrac{3u_R}{2}\left(1-\dfrac{2\sigma_4}{3u_R}\right)\left(\frac{\pt u}{\pt x}\right)_R.
\end{align*}
Since $u^*_R=u_R$ and $2\sigma_4\neq u_R$ across 4-shock wave, it will be simplified as
\begin{align}
     \left(\frac{\pt \xi}{\pt t}\right)^*_R&=-\dfrac{u_R}{2}\xi^{\prime}_R,\label{xirelations}\\
     \left(\frac{\pt u}{\pt t}\right)^*_R&=-\dfrac{3u_R}{2}u^{\prime}_R. \label{urelations}
\end{align}
where $\xi^{\prime}_R = \frac{1}{b_R}f_R^{\prime}-\frac{f_R}{b_R^2}b_R^{\prime}$, $u^{\prime}_R=b_Rf_R^{\prime}+b_R^{\prime}f_R$.

From the formula \eqref{4s-3}, we have
\begin{align}
\left(\frac{\pt \tau}{\pt t}\right)^*_R-\frac{2\sigma_4}{2u^*_R+v^*_R}\left(\frac{\pt \tau}{\pt t}\right)^*_R=-\left(u_R+\frac 12v_R\right)\left(\frac{\pt \tau}{\pt x}\right)_R+\sigma_4\left(\frac{\pt \tau}{\pt x}\right)_R,\notag
\end{align}
that is
\begin{align}\label{taurelations}
\left(\frac{\pt \tau}{\pt t}\right)^*_R=\dfrac{(2u^*_R+v^*_R)(\sigma_4-u_R-\frac 12v_R)}{2(2u^*_R+v^*_R-2\sigma_4)}\tau_R^{\prime}=\frac{(2 u^*_R+v^*_R)}{2}\left(\dfrac{v^*_R}{v_R}\right)^{1/2}\tau_R^{\prime}.
\end{align}
where $\tau_R^{\prime}=\frac{1}{q_R}g_R^{\prime}-\frac{g_R}{q_R^2}q_R^{\prime}$.

In view of \eqref{xirelations}, \eqref{urelations} and \eqref{taurelations}, we obtain the linear system \eqref{Linear_system_S_1}.
\end{proof}
\begin{remark}
    In \eqref{Linear_system_S_1}, $\bU_R$ and $\bU_R^{\prime}$ are the initial data of the generalized Riemann problem and $\bU^*_R$ is solved by the associated Riemann problem. Specifically, the first two equations in \eqref{Linear_system_S_1} can be directly solved for $(\frac{\pt f}{\pt t})^*_R$ and $(\frac{\pt b}{\pt t})^*_R$. The last equation will be equipped with \eqref{Linear_system_R_2}, \eqref{Linear_system_C_1} and \eqref{Linear_system_C_2} for solving the remaining unknowns. 
\end{remark}
\subsection{Time derivatives of solutions of \eqref{eq: Main_system} at the singularity $(0, 0)^\top$ for the wave configuration $R_1+J_2+J_3+S_4$}\label{sec: R+J+J+S}
In this section, we use the results of the previous section in order to calculate the instantaneous values $(\partial \mathbf{U}/\partial t)^*_L, (\partial \mathbf{U}/\partial t)_M^*$, and $(\partial \mathbf{U}/\partial t)^*_R$ for the wave configuration of \ref{GRP-pic}. In total, there are 12 unknown time derivatives. However, from \eqref{Linear_system_R_2}, \eqref{Linear_system_C_1}, \eqref{Linear_system_C_2} and \eqref{Linear_system_S_1}, we observe that $\left(\dfrac{\partial f}{\partial t}\right)^*_R$ and $\left(\dfrac{\partial b}{\partial t}\right)^*_R$ are directly solved by \eqref{Linear_system_S_1} while $\left(\dfrac{\partial g}{\partial t}\right)_M^*$, $\left(\dfrac{\partial q}{\partial t}\right)_M^*$, $\left(\dfrac{\partial f}{\partial t}\right)_M^*$, and $\left(\dfrac{\partial b}{\partial t}\right)_M^*$ satisfy:
\begin{align}\label{eq: derivative_relations}
\begin{cases}
\left(\dfrac{\partial g}{\partial t}\right)_M^*= \left(\dfrac{\partial g}{\partial t}\right)^*_L, \left(\dfrac{\partial q}{\partial t}\right)_M^*=\left(\dfrac{\partial q}{\partial t}\right)^*_L
,\vspace{0.2 cm}\\
\left(\dfrac{\partial f}{\partial t}\right)_M^*=\left(\dfrac{\partial f}{\partial t}\right)^*_R
, \left(\dfrac{\partial b}{\partial t}\right)_M^*=\left(\dfrac{\partial b}{\partial t}\right)^*_R.
\end{cases}
\end{align}
There remain six unknowns that need to be computed which are
\begin{align*}
    \dfrac{\partial \hat{\bU}}{\partial t}=\left[\left(\dfrac{\partial f}{\partial t}\right)^*_L,~\left(\dfrac{\partial b}{\partial t}\right)^*_L,~\left(\dfrac{\partial g}{\partial t}\right)^*_L,~\left(\dfrac{\partial q}{\partial t}\right)^*_L,~\left(\dfrac{\partial g}{\partial t}\right)^*_R, ~~\left(\dfrac{\partial q}{\partial t}\right)^*_R\right]^\top.
\end{align*}
Using the linear relations obtained in the Lemmas \ref{l-3.1}, \ref{l-3.2} and \ref{l-3.3}, we can obtain a system of six linear equations that provide these remaining time derivatives.
\begin{theorem}[Time-derivatives of the conservative variables for the wave configuration of \ref{GRP-pic}]\label{non-sonic-case}
\rm
The trace values $(\partial \mathbf{U}/\partial t)^*_L, (\partial \mathbf{U}/\partial t)_M^*$ and $(\partial \mathbf{U}/\partial t)^*_R$ in \eqref{GRP-value} are obtained by \eqref{eq: derivative_relations} and solving an invertible system of linear equations for $\dfrac{\partial\hat{\bU}}{\partial t}$ given by
\begin{align}\label{main_linear_system1}
    \mathbf{A(U^*)}\dfrac{\partial \hat{\bU}}{\partial t}= \mathbf{D(U_{L}, U_{R},U^*,U_L^{\prime},U_R^{\prime})}.
\end{align}
The coefficient matrix $\mathbf{A}$ and the vector $\mathbf{D(U_L, U_R,U^*,U_L^{\prime},U_R^{\prime})}$ only depend on the initial data or the intermediate states of the associated Riemann problem \eqref{RP-data}, and are given by

\begin{align}
    \mathbf{A(U^*)}=\begin{bmatrix}
        b^*_L & f^*_L &0&0&0&0\\
        \dfrac{1}{b^*_L}&-\dfrac{f^*_L}{(b^*_L)^2}&0&0&0&0\\
        b^*_L & f^*_L & \dfrac{3v^*_L-u^*_L}{4g^*_L}& \dfrac{3v^*_L-u^*_L}{4q^*_L}&0&0\\
        0 & 0 & \dfrac{1}{q^*_L}&-\dfrac{g^*_L}{(q^*_L)^2}&0&0\\
        0 & 0 &-q^*_L& -g^*_L& q^*_R& g^*_R\\
        0 & 0 & 0 & 0&\dfrac{1}{q^*_R}&-\dfrac{g^*_R}{(q^*_R)^2}\\
    \end{bmatrix},
\end{align}
 and
 \begin{align}
\mathbf{D(U_{L}, U_{R},U^*,U_L^{\prime},U_R^{\prime}))}=\begin{bmatrix}
    \left(\dfrac{\pt u}{\pt t}\right)_R\vspace{0.2 cm}\\
    \left(\dfrac{u^*_L}{u_{L}}\right)^{3/2}\left(\dfrac{\partial \xi}{\partial t}\right)_{L}\vspace{0.2 cm}\\
   \dfrac{3v_L-u_L}{(2u_L+3v_L)v_L^{3/4}}\left(\dfrac{\partial \eta}{\partial t}\right)_{L}\dfrac{(2u^*_L+3v^*_L)(v^*_L)^{3/4}}{3v^*_L-u^*_L}\vspace{0.2 cm}\\ 
   \dfrac{\Upsilon_L^{\tau}(\beta^*_L;\beta_L)}{2u_L+v_L} \left(\dfrac{\partial \tau}{\partial t}\right)_{L}\dfrac{2u^*_L+v^*_L}{u^*_L-v^*_L}\vspace{0.2 cm}\\
    0\vspace{0.2 cm}\\
 \dfrac{(2 u^*_R+v^*_R)}{2u_R+v_R}\left(\dfrac{v^*_R}{v_R}\right)^{1/2}\left(\dfrac{\pt \tau}{\pt t}\right)_R
\end{bmatrix} 
\end{align}
with 
\[
\Upsilon_L^{\tau}(\beta^*_L;\beta_{L})=(v^*_L)^{3/4}v_L^{-1/2}\bigg(v_L^{-1/4}(u_L+v_L)-2(v^*_L)^{3/4}\bigg).\notag
\]
\end{theorem}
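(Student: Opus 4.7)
The plan is to assemble a six-dimensional linear system for $\partial\hat{\bU}/\partial t$ by selecting the independent linear relations coming from Lemmas \ref{l-3.1}, \ref{l-3.2} and \ref{l-3.3}, and then to verify invertibility of the resulting coefficient matrix by exploiting a lower block-triangular structure.

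The first step is to reduce the number of unknowns. The contact-wave identities \eqref{eq: derivative_relations}, coming from constancy of $(g,q)$ across $J_2$ and of $(f,b)$ across $J_3$, eliminate the four $M^*$ derivatives in favour of the $L^*$ and $R^*$ derivatives. Moreover, the first two equations of the 4-shock system \eqref{Linear_system_S_1} decouple and yield $(\partial f/\partial t)^*_R$ and $(\partial b/\partial t)^*_R$ directly from the right initial data. What remains is the six-component vector $\partial\hat{\bU}/\partial t$.

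The second step is to write the six governing equations. Row 1 combines the continuity of $u=fb$ across both contacts with the shock relation \eqref{urelations}, giving $b^*_L(\partial f/\partial t)^*_L + f^*_L(\partial b/\partial t)^*_L = (\partial u/\partial t)_R = -\tfrac{3}{2}u_R u'_R$. Rows 2--4 come from the three rarefaction identities \eqref{eq: 3.17}, \eqref{eq: 3.19} and \eqref{eq: 3.18} evaluated at $\beta=\beta^*_L$, after rewriting the Riemann-invariant time-derivatives in terms of $(f,b,g,q)$-derivatives via \eqref{selectedinvariants}. Row 5 encodes constancy of $v=gq$ across $J_3$, producing $q^*_R(\partial g/\partial t)^*_R + g^*_R(\partial q/\partial t)^*_R = q^*_L(\partial g/\partial t)^*_L + g^*_L(\partial q/\partial t)^*_L$ after substituting the $M^*$ identities and $q^*_M=q^*_L$, $g^*_M=g^*_L$. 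Row 6 is \eqref{taurelations}, the $\tau$-shock relation of Lemma \ref{l-3.3}, rewritten in $(g,q)$-derivatives at $R^*$. The source vector $\mathbf{D}$ is then read off from the same lemmas. For invertibility, observe that the coefficient matrix is lower block-triangular with three $2\times 2$ diagonal blocks, acting on $\{(\partial f/\partial t)^*_L,(\partial b/\partial t)^*_L\}$, $\{(\partial g/\partial t)^*_L,(\partial q/\partial t)^*_L\}$ and $\{(\partial g/\partial t)^*_R,(\partial q/\partial t)^*_R\}$ respectively. A short computation shows that their determinants are $-2f^*_L/b^*_L$, $-(3v^*_L - u^*_L)/(2(q^*_L)^2)$ and $-2g^*_R/q^*_R$; on the state space \eqref{statespace} one has $f>0$, $b<0$, $g,q>0$, so in particular $u^*_L=(fb)^*_L<0<(gq)^*_L=v^*_L$ and $3v^*_L-u^*_L>0$. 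Hence all three determinants are nonzero and $\mathbf{A}(\bU^*)$ is invertible.

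The principal difficulty is not conceptual but bookkeeping: Lemmas \ref{l-3.1} and \ref{l-3.3} are formulated in Riemann-invariant coordinates $(u,\xi,\tau,\eta)$, so recovering precisely the stated entries of $\mathbf{A}(\bU^*)$ and $\mathbf{D}$ requires substituting the definitions \eqref{selectedinvariants} consistently and tracking the $\Upsilon^\tau_L(\beta^*_L;\beta_L)$ factor and the $(v^*_L/v_L)^{3/4}$ power that arise from the expansion of the curved rarefaction wave. Once this conversion is done carefully, the invertibility check via the block structure is immediate.
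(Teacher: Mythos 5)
Your proposal is correct and follows essentially the same route as the paper: the six equations are assembled exactly as you describe from Lemmas \ref{l-3.1}, \ref{l-3.2} and \ref{l-3.3} together with \eqref{eq: derivative_relations}, and invertibility is argued via the lower block-triangular structure with the three $2\times2$ diagonal blocks. Your explicit determinants $-2f^*_L/b^*_L$, $-(3v^*_L-u^*_L)/(2(q^*_L)^2)$ and $-2g^*_R/q^*_R$, together with the sign constraints from the state space \eqref{statespace}, make precise the paper's brief remark that each subblock has a nonzero determinant.
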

\begin{proof}
A simple deduction using Lemmas \ref{l-3.1}, \ref{l-3.2}, and \ref{l-3.3} leads to the system \eqref{main_linear_system1}. 
The linear system \eqref{main_linear_system1} is uniquely solvable as the matrix $\mathbf{A(U)}$ is invertible. This can be easily seen by its block structure. Each subblock of $\mathbf{A(U)}$ has a nonzero determinant for $\mathbf{U}\in \mathcal{U}$.
\end{proof}
\begin{remark}
Due to the block nature of the matrix $\mathbf{A(U)}$, solving the linear system \eqref{main_linear_system1} becomes cheap. One needs to solve each $(2\times 2)$-subsystem, which can be done explicitly.
\end{remark}
\subsection{The acoustic case}\label{sec: acoustic}
The acoustic case is the linear simplification of the generalized problem with following initial data
$\mathbf{U}_L=\mathbf{U}_R$ and $\mathbf{U}_L'\neq \mathbf{U}_R'$. The nomenclature for this case is borrowed from the GRP methods for compressible Euler equations; see e.g. \cite{ben2006direct}. In this case, only contact waves emanate from the singularity $(0, 0)^\top$, and therefore, the scheme becomes simple. Using Theorem \ref{non-sonic-case}, it is easy to obtain the acoustic case.  We state this in the following theorem.

\begin{theorem}[Acoustic case]\label{acoustic}
    When $\mathbf{U}_L=\mathbf{U}_R=\bU_0$ and $\mathbf{U}_L'\neq \mathbf{U}_R'$, the time derivatives $\dfrac{\partial\mathbf{U^*}}{\partial t}=\bigg[
    \left(\dfrac{\partial f}{\partial t}\right)^*_L,
   \left(\dfrac{\partial b}{\partial t}\right)^*_L,
     \left(\dfrac{\partial g}{\partial t}\right)^*_L,
    \left(\dfrac{\partial q}{\partial t}\right)^*_L,
    \left(\dfrac{\partial g}{\partial t}\right)^*_R,
   \left(\dfrac{\partial q}{\partial t}\right)^*_R\bigg]^{\top}$ can be obtained by solving the following system of linear equations:
 \begin{align}
     \mathbf{A(U_0)} \dfrac{\partial \mathbf{U^*}}{\partial t}= \mathbf{D}(\bU_0,\mathbf{U}_L',\mathbf{U}_R'),
 \end{align}   
where the coefficient matrix $\mathbf{A}$ and the vector $\mathbf{D}$ are given by
\begin{align}
    \mathbf{A(U_0)}=\begin{bmatrix}
        b_{0}& f_{0}&0&0&0&0\\
        \dfrac{1}{b_{0}}&-\dfrac{f_{0}}{(b_{0})^2}&0 &0&0&0\\
        0 & 0 & q_{0}& g_{0}&0&0\\
        0 & 0 & \dfrac{1}{q_{0}}&-\dfrac{g_{0}}{(q_{0})^2} &0&0\\
        0 & 0 &-q_{0}& -g_{0}& q_{0}& g_{0}\\
        0 & 0 & 0 & 0&\dfrac{1}{q_{0}}&-\dfrac{g_{0}}{(q_{0})^2}\\
    \end{bmatrix},
\end{align}
 and
 \begin{align}
\mathbf{D(U_{0},U_L',U_R')}=\begin{bmatrix}
   -\dfrac{3}{2}u_0 u_R'\vspace{0.2 cm}\\
   -\dfrac{1}{2}u_0 \xi_L'\vspace{0.2 cm}\\
    \dfrac{4v_0}{3v_0-u_0}\left(\dfrac{3}{2}u_0 u_R'-(u_0+\frac{3}{2}v_0)\left(u_L'+v_L'\left(\dfrac 34-\dfrac{u_0}{4v_0}\right)\right)\right)\vspace{0.2 cm}\\    
    -\dfrac{1}{2}(2u_0+v_0) \tau_L'\vspace{0.2 cm} \\
    0\vspace{0.2 cm}\\
-\dfrac{1}{2}(2u_0+v_0) \tau_R'
\end{bmatrix}.
\end{align}

\end{theorem}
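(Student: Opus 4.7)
My plan is to derive Theorem \ref{acoustic} as a direct specialization of Theorem \ref{non-sonic-case} to the degenerate data $\mathbf{U}_L = \mathbf{U}_R = \mathbf{U}_0$. In that case the associated Riemann problem has the trivial self-similar solution $\mathbf{U}^A \equiv \mathbf{U}_0$, so every intermediate state collapses: $\mathbf{U}^*_L = \mathbf{U}^*_M = \mathbf{U}^*_R = \mathbf{U}_0$. After substituting these equalities, rows 1, 2, 4, 5 and 6 of $\mathbf{A}(\mathbf{U}^*)$ already match the corresponding rows of the acoustic matrix $\mathbf{A}(\mathbf{U}_0)$, while row 3 requires a single row operation (see below).

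Next I would verify that the ``wave-propagation'' prefactors appearing in the right-hand side $\mathbf{D}$ of \eqref{main_linear_system1} collapse to unity at $\mathbf{U}^* = \mathbf{U}_0$. This is immediate for $(u^*_L/u_L)^{3/2}$, and straightforward for the symmetric $\eta$- and $\tau$-factors; in particular one computes $\Upsilon_L^{\tau}(\beta^*_L;\beta_L) = v_0^{1/4}\bigl(v_0^{-1/4}(u_0+v_0) - 2v_0^{3/4}\bigr) = u_0 - v_0$ at the degenerate state, which cancels with the $(u^*_L - v^*_L)$ in the denominator of the 1-rarefaction $\tau$-factor; the 4-shock factor $\tfrac{2u^*_R+v^*_R}{2u_R+v_R}(v^*_R/v_R)^{1/2}$ reduces to $1$ in the same way. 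Plugging in the Lax-Wendroff formulas \eqref{eq: left_derivs} evaluated at $\mathbf{U}_0$ for $(\partial\xi/\partial t)_L$, $(\partial\tau/\partial t)_L$, $(\partial\tau/\partial t)_R$ and $(\partial u/\partial t)_R$ then produces rows 1, 2, 4 and 6 of the claimed $\mathbf{D}(\mathbf{U}_0,\mathbf{U}_L',\mathbf{U}_R')$, while row 5 is already trivially zero.

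The only nontrivial step is the reformulation of row 3. As derived in Lemma \ref{l-3.1}, this row encodes the identity $b_0 f_t + f_0 b_t + \tfrac{3v_0-u_0}{4g_0} g_t + \tfrac{3v_0-u_0}{4q_0} q_t = v_0^{1/4}(\partial\eta/\partial t)^*_L$, obtained by differentiating $\eta = (u+v)v^{-1/4}$ and using $\eta_v = (3v-u)/(4v^{5/4})$. To convert it to the stated acoustic form $q_0 g_t + g_0 q_t = v_t$ I would perform (row 3) $-$ (row 1) and then scale by $4v_0/(3v_0 - u_0)$; on the right-hand side this yields $\tfrac{4v_0}{3v_0-u_0}\bigl(v_0^{1/4}(\partial\eta/\partial t)^*_L - (\partial u/\partial t)^*_R\bigr)$. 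Substituting the Lax-Wendroff expressions $v_0^{1/4}(\partial\eta/\partial t)_L = -(u_0+\tfrac{3}{2}v_0)\bigl(u_L' + (\tfrac{3}{4}-\tfrac{u_0}{4v_0})v_L'\bigr)$ and $(\partial u/\partial t)_R = -\tfrac{3}{2}u_0 u_R'$ then produces exactly the third entry of $\mathbf{D}(\mathbf{U}_0,\mathbf{U}_L',\mathbf{U}_R')$. I expect the main obstacle to be precisely this algebraic bookkeeping for $\eta$, i.e.\ correctly computing $\eta_v$ and keeping all signs and fractions consistent through the row reduction. The invertibility of $\mathbf{A}(\mathbf{U}_0)$ is inherited from the block-triangular structure already used in the proof of Theorem \ref{non-sonic-case}, so the resulting linear system is uniquely solvable.
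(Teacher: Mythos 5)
Your proposal is correct and follows exactly the route the paper intends: the paper gives no written proof beyond ``Using Theorem \ref{non-sonic-case}, it is easy to obtain the acoustic case,'' and your specialization of \eqref{main_linear_system1} to $\mathbf{U}^*=\mathbf{U}_0$ (with all wave-propagation prefactors collapsing to $1$, e.g. $\Upsilon_L^{\tau}=u_0-v_0$) together with the row operation $\bigl(\text{row }3-\text{row }1\bigr)\cdot\tfrac{4v_0}{3v_0-u_0}$ is precisely the omitted computation. Your bookkeeping for the $\eta$-row, in particular using $v_0^{1/4}(\partial\eta/\partial t)_L=-(u_0+\tfrac32 v_0)\bigl(u_L'+(\tfrac34-\tfrac{u_0}{4v_0})v_L'\bigr)$, is consistent with the stated right-hand side of the theorem.
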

We have covered all possible cases to compute time derivatives for the wave configuration of \ref{GRP-pic}. We now proceed to discuss the case when the first wave is a shock wave and the fourth wave is a rarefaction wave.
\section{Resolution of the generalized Riemann problem for the wave configuration $S_1+J_2+J_3+R_4$}\label{sec: 4}
In this section, we consider the wave configuration when the first wave is a shock and the fourth wave is a rarefaction wave connected via two contact discontinuities ($S_1+J_2+J_3+R_4$). Note that the fourth characteristic field of the system \eqref{eq: Main_system} is actually a Temple field, as discussed in Section \ref{sec: 2.3}. Therefore, the construction of the GRP method for the other two wave configurations ($S_1+J_2+J_3+S_4$ and $R_1+J_2+J_3+R_4$) is similar and doesn't need a separate discussion. 

In what follows, we consider the wave configuration $S_1+J_2+J_3+R_4$ and obtain time derivatives of conservative variables in the intermediate states by solving a system of linear equations. 
\subsection{{Resolution of the 1-shock wave}}
Let $x=x(t)$ be the shock trajectory which is associated with the $(\lambda_1=\frac 32fb)$ first-characteristic family and assume that it propagates with the shock speed $\sigma_1=x^{\prime}(t)$. $\bU_L$ and $\bU^*_L$ are used to denote the pre-shock and post-shock values of $\bU$, respectively. Along this shock, the Rankine-Hugoniot relations simplify to
\begin{align}
\Xi_1^2 &= \xi_L-\xi^*_L=0,\ \xi = \frac{f}{b},\label{2s-rh1}\\
\Xi_2^2 &= \tau_L-\tau^*_L = 0,\ \tau = \frac{g}{q},\label{2s_rh2}\\
\Xi_3^2 &= q^*_L-\tilde{\Phi}(f_L,b_L,f^*_L,b^*_L,g_L,q_L,g^*_L)=0,\label{2s_rh3}\\
\tilde{\Phi} := &\left(\frac{g_L}{g^*_L}\right)^2q_L + \frac{1}{(g^*_L)^2}(f_Lb_Lg_L+f_Lb_Lg^*_L-f^*_Lb_Lg_L\notag\\
&+f^*_Lb_Lg^*_L-f^*_Lb^*_Lg_L-f^*_Lb^*_Lg^*_L),\label{2s_rh4}
\end{align}
where \eqref{2s_rh3}-\eqref{2s_rh4} are deduced by \eqref{S_2def}.

Using the nonconservative form \eqref{eq: 3.6}, it is easy to obtain
\begin{align}
&\frac{\pt f}{\pt x}=\frac{2}{3b^2}\frac{\pt b}{\pt t}-\frac{4}{3fb}\frac{\pt f}{\pt t},\label{derivsofU5}\\
&\frac{\pt b}{\pt x}=\frac{2}{3f^2}\frac{\pt f}{\pt t}-\frac{4}{3fb}\frac{\pt b}{\pt t},\label{derivsofU6}\\
&\frac{\pt g}{\pt x}=\vartheta_2 g^2\frac{\pt q}{\pt t}-\vartheta_3\frac{\pt g}{\pt t}+\frac{2}{3u}\vartheta_1 g\frac{\pt (fb)}{\pt t},\label{derivsofU7}\\
&\frac{\pt q}{\pt x}=\vartheta_2 q^2\frac{\pt g}{\pt t}-\vartheta_3\frac{\pt q}{\pt t}+\frac{2}{3u}\vartheta_1 q\frac{\pt (fb)}{\pt t},\label{derivsofU8}
\end{align}
where $\vartheta_1=\dfrac{2}{2u+3v}$, $\vartheta_2=\dfrac{2}{(2u+3v)(2u+v)}$,$\vartheta_3=\dfrac{4(u+v)}{(2u+3v)(2u+v)}$. Then we have the following result.
\begin{lemma}\label{l-A.1}
Across the 1-shock wave, the trace values $\left(\dfrac{\partial\mathbf{U}}{\partial t}\right)^*_L$ satisfy the following linear equations    
\begin{align}\label{Linear_system_Shock}
\mathbf{A^*_L}\left(\dfrac{\partial \mathbf{U}}{\partial t}\right)^*_L= \mathbf{D_{L}}.
\end{align}
Here
\begin{align}
    \mathbf{A^*_L}&=\begin{bmatrix}
        \dfrac{1}{b^*_L} & -\dfrac{f^*_L}{(b^*_L)^2}& 0 &0\\
        0&0& \dfrac{1}{q^*_L}& -\dfrac{g^*_L}{(q^*_L)^2}\\
        \mathcal{A}^*_L&\mathcal{B}^*_L&\mathcal{C}^*_L & \mathcal{D}^*_L  
    \end{bmatrix}, \quad \\
    \mathbf{D_{L}}&=\begin{bmatrix}
        \left(\dfrac{u^*_L}{u_L}\right)^{3/2}\left(\dfrac{\pt\xi}{\pt t}\right)_{L}\vspace{0.2 cm}\\
   \dfrac{(2u^*_L+v^*_L)(2u_L+v_L-2\sigma_1)}{(2u_L+v_L)(2u^*_L+v^*_L-2\sigma_1)}\left(\dfrac{\partial \tau}{\partial t}\right)_{L}\vspace{0.2 cm}\\
   \mathcal{A}_L\left(\dfrac{\partial f}{\partial t}\right)_L+\mathcal{B}_L\left(\dfrac{\partial b}{\partial t}\right)_L+\mathcal{C}_L\left(\dfrac{\partial g}{\partial t}\right)_L+\mathcal{D}_L\left(\dfrac{\partial q}{\partial t}\right)_L
    \end{bmatrix},
\end{align}
with 
\begin{align}\label{shock-coefficients_1}
\begin{cases}
\mathcal{A}^*_L&=-\left[\left(1-\dfrac{4\sigma_1}{3u^*_L}\right)\Delta_5+\dfrac{2\sigma_1}{3(f^*_L)^2}\Delta_6+\dfrac{2\sigma_1(\vartheta_1)^*_L}{3f^*_L}[q^*_L(g^*_L)^2+{2}\Delta_1]\right],\vspace{0.2cm}\\
\mathcal{B}^*_L&=-\left[\left(1-\dfrac{4\sigma_1}{3u^*_L}\right)\Delta_6+\dfrac{2\sigma_1}{3(b^*_L)^2}\Delta_5+\dfrac{2\sigma_1(\vartheta_1)^*_L}{3b^*_L}[q^*_L(g^*_L)^2+{2}\Delta_1]\right],\vspace{0.2cm}\\
\mathcal{C}^*_L&=\left[\sigma_1 (\vartheta_2)^*_L(g^*_L q^*_L)^2+\dfrac{2\Delta_1}{g^*_L}(1-\sigma_1 (\vartheta_3)^*_L)\right],\vspace{0.2cm}\\
    \mathcal{D}^*_L&=\left[(g^*_L)^2(1-\sigma_1(\vartheta_3)^*_L)+2g^*_L\sigma_1 \Delta_1(\vartheta_2)^*_L\right], 
\end{cases}
\end{align}
\begin{align}\label{shock-coefficients-2}
\begin{cases}
\mathcal{A}_{L}&=-\left[\Delta_2\left(1-\dfrac{4\sigma_1}{3u_L}\right)+\dfrac{2\sigma_1}{3(f_L)^2}\Delta_3+\dfrac{2\sigma_1(\vartheta_1)_L}{3f_L}(q_L(g_L)^2+\Delta_4 g_L)\right],\vspace{0.2cm}\\
\mathcal{B}_{L}&=-\left[\dfrac{2\sigma_1}{3(b_L)^2}\Delta_2+\Delta_3(1-\dfrac{4\sigma_1}{3u_L})+\dfrac{2\sigma_1(\vartheta_1)_L}{3b_L}(q_L(g_L)^2+\Delta_4g_L)\right],\vspace{0.2cm}\\
\mathcal{C}_{L}&=\left[(g_Lq_L)^2\sigma_1(\vartheta_2)_L+(1-\sigma_1(\vartheta_3)_L)\Delta_3\right],\vspace{0.2cm}\\
    \mathcal{D}_{L}&=(g_L)^2\left[1-\sigma_1(\vartheta_3)_L+\sigma_1(\vartheta_2)_L\Delta_4\right]. 
\end{cases}
\end{align}   
In \eqref{shock-coefficients_1} and \eqref{shock-coefficients-2}, the coefficients are given by
\begin{align}
&\Delta_1 = v_Lg_L+(u_L-u^*_L)\left(g_L+\dfrac 12g^*_L\right)-f^*_Lb_L\left(g_L-\dfrac 12g^*_L\right),\notag\\
&\Delta_2 = b_L(g^*_L+g_L),\notag\\
&\Delta_3 = f_L(g^*_L+g_L)+f^*_L(g^*_L-g_L),\notag\\
&\Delta_4 = 2v_L+u_L-u^*_L-f^*_Lb_L,\notag\\
&\Delta_5 = b^*_L(g_L+g^*_L)+b_L(g_L-g^*_L),\notag\\
&\Delta_6 = f^*_L(g_L+g^*_L).\notag
\end{align}
\end{lemma}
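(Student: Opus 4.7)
The plan is to mirror the derivation of Lemma \ref{l-3.3} carried out for the 4-shock, now applied to the three Rankine--Hugoniot identities \eqref{2s-rh1}--\eqref{2s_rh3} that characterise the 1-shock. First I would apply the Lax--Wendroff-type material derivative $D_{\sigma_1}/dt = \partial_t + \sigma_1\,\partial_x$ to each of $\Xi_1^2$, $\Xi_2^2$, and $\Xi_3^2$. Setting each material derivative to zero yields three identities linking the temporal and spatial traces of the relevant quantities on the pre-shock side ($\bU_L,\bU_L'$) to those on the post-shock side ($\bU^*_L$).

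Next, to close the system entirely in terms of temporal traces, I would convert every $\partial/\partial x$ to $\partial/\partial t$. For $\xi$ and $\tau$, this is immediate from the diagonal equations in \eqref{Rtransformed}, which give $\xi_x=-2\xi_t/u$ and $\tau_x=-2\tau_t/(2u+v)$. Substituting into the material-derivative identities, simplifying with the 1-shock speed from \eqref{S_2def} and using $\xi^*_L=\xi_L$ produces the stated ratios $(u^*_L/u_L)^{3/2}$ and $(2u^*_L+v^*_L)(2u_L+v_L-2\sigma_1)/[(2u_L+v_L)(2u^*_L+v^*_L-2\sigma_1)]$. Writing $\partial_t\xi=b^{-1}\partial_tf-fb^{-2}\partial_t b$ and the analogous expression for $\partial_t\tau$ gives the first two rows of $\mathbf{A}^*_L$ together with the first two entries of $\mathbf{D}_L$.

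The main effort is in the third equation, obtained by differentiating $\Xi_3^2 = q^*_L - \tilde{\Phi}(f_L,b_L,f^*_L,b^*_L,g_L,q_L,g^*_L)=0$ along the shock. Applying $D_{\sigma_1}/dt$ produces, by the chain rule, a linear combination of $\partial_t$ and $\partial_x$ of $f,b,g,q$ on both sides of the shock, weighted by the appropriate partial derivatives of $\tilde{\Phi}$. I would then invoke the spatial-to-temporal conversions \eqref{derivsofU5}--\eqref{derivsofU8}, which are algebraic consequences of the nonconservative form \eqref{eq: 3.6}, to eliminate every $\partial_x$ in favour of $\partial_t$. Grouping coefficients on the post-shock side reproduces the quantities $\mathcal{A}^*_L,\mathcal{B}^*_L,\mathcal{C}^*_L,\mathcal{D}^*_L$ in \eqref{shock-coefficients_1}, while grouping coefficients on the pre-shock side reproduces $\mathcal{A}_L,\mathcal{B}_L,\mathcal{C}_L,\mathcal{D}_L$ in \eqref{shock-coefficients-2}; the abbreviations $\vartheta_1,\vartheta_2,\vartheta_3$ and $\Delta_1,\ldots,\Delta_6$ are exactly the linear combinations that emerge during this reorganisation.

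The principal obstacle is purely algebraic bookkeeping: the third row requires handling seven separate partial derivatives of $\tilde{\Phi}$ plus the $q^*_L$ term, each coupled with a multi-term conversion from \eqref{derivsofU5}--\eqref{derivsofU8}, and the particular forms of $\Delta_1,\ldots,\Delta_6$ correspond to the cancellations that survive after using $\xi^*_L=\xi_L$ and $\tau^*_L=\tau_L$ together with the definition of $\sigma_1$. Once all terms are gathered, the linear system \eqref{Linear_system_Shock} with the coefficients \eqref{shock-coefficients_1}--\eqref{shock-coefficients-2} follows by inspection, and no conceptually new ingredient beyond the Lax--Wendroff procedure already employed in Lemma \ref{l-3.3} is needed.
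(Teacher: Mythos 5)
Your proposal follows the paper's proof essentially verbatim: differentiate the Rankine--Hugoniot identities $\Xi_1^2,\Xi_2^2,\Xi_3^2$ along the shock trajectory with $D_{\sigma_1}/dt$, use the diagonal equations \eqref{Rtransformed} to trade $\xi_x,\tau_x$ for $\xi_t,\tau_t$ (with the explicit $\sigma_1$ from \eqref{S_2def} yielding the $(u^*_L/u_L)^{3/2}$ factor), and use \eqref{derivsofU5}--\eqref{derivsofU8} to eliminate the spatial derivatives in the differentiated $\tilde\Phi$-relation, after which the coefficients $\mathcal{A}^*_L,\ldots,\mathcal{D}_L$ and $\Delta_1,\ldots,\Delta_6$ emerge by collecting terms. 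This is exactly the argument given in the paper, so the proposal is correct and takes the same route.
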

\begin{proof}
The proof of this lemma is similar as Lemma \ref{l-3.3}.
We just need to take the directional derivative along the shock trajectory $x=x(t)$, 
\begin{align}
&\frac{D_{\sigma_1}\Xi_1}{dt}=\left(\frac{\pt}{\pt t}+\sigma_1\frac{\pt}{\pt x}\right)\Xi_1 = 0,\label{2s-1}\\
&\frac{D_{\sigma_1}\Xi_2}{dt}=\left(\frac{\pt}{\pt t}+\sigma_1\frac{\pt}{\pt x}\right)\Xi_2 = 0,\label{2s-2}\\
&\frac{D_{\sigma_1}\Xi_3}{dt}=\left(\frac{\pt}{\pt t}+\sigma_1\frac{\pt}{\pt x}\right)\Xi_3 = 0,\label{2s-3}
\end{align}
and apply the Lax-Wendroff procedure by using the continuity property of solutions adjacent to the shock front.
Since $\xi$ and $\tau$ satisfy \eqref{Rtransformed}, we can use that
\begin{align}
&\frac{\pt \xi}{\pt t}=-\frac 12 u\frac{\pt \xi}{\pt x},\ \frac{\pt \xi}{\pt x}=-\frac{2}{u}\frac{\pt \xi}{\pt t},\notag\\
&\frac{\pt \tau}{\pt t}=-\left(u+\frac 12 v\right)\frac{\pt \tau}{\pt x},\
\frac{\pt \tau}{\pt x}=-\frac{2}{2u+v}\frac{\pt \tau}{\pt t}.\notag
\end{align}
For the first and second formula \eqref{2s-1}-\eqref{2s-2}, we have
\begin{align}
&\left(\frac{\pt\xi}{\pt t}\right)^*_{_L}-\frac{2\sigma_1}{u^*_L}\left(\frac{\pt\xi}{\pt t}\right)^*_{_L} = \left(\frac{\pt\xi}{\pt t}\right)_L-\frac{2\sigma_1}{u_L}\left(\frac{\pt\xi}{\pt t}\right)_L,\notag\\
&\left(\frac{\pt\tau}{\pt t}\right)^*_{_L}-\frac{2\sigma_1}{2u^*_L+v^*_L}\left(\frac{\pt\tau}{\pt t}\right)^*_{_L} = \left(\frac{\pt\tau}{\pt t}\right)_L-\frac{2\sigma_1}{2u_L+v_L}\left(\frac{\pt\tau}{\pt t}\right)_L,\notag
\end{align}
that is
\begin{align}
&\left(\frac{\pt\xi}{\pt t}\right)^*_{_L} = \frac{u^*_L(u_L-2\sigma_1)}{u_L(u^*_L-2\sigma_1)}\left(\frac{\pt\xi}{\pt t}\right)_L=\left(\dfrac{u^*_L}{u_L}\right)^{3/2}\left(\frac{\pt\xi}{\pt t}\right)_{L},\label{xi_shock}\\
&\left(\frac{\pt\tau}{\pt t}\right)^*_{_L}= \frac{(2u^*_L+v_L^*)(2u_L+v_L-2\sigma_1)}{(2u_L+v_L)(2u^*_L+v_L^*-2\sigma_1)}\left(\frac{\pt\tau}{\pt t}\right)_L.\label{tau_shock}
\end{align}
For \eqref{2s-3}, the idea is similar, but the formula will be more intricate.
\begin{align}
\frac{D_{\sigma_1}\Xi_3}{Dt} = &\left(\frac{\pt q}{\pt t}\right)^*_{_L}+\sigma_1\left(\frac{\pt q}{\pt x}\right)^*_{_L}-\left(\frac{g_L}{g^*_L}\right)^2\left(\left(\frac{\pt q}{\pt t}\right)_L+\sigma_1\left(\frac{\pt q}{\pt x}\right)_L\right)\notag\\
&-\frac{2v_L}{(g^*_L)^2}\left(\left(\frac{\pt g}{\pt t}\right)_L+\sigma_1\left(\frac{\pt g}{\pt x}\right)_L\right)+\frac{2v_Lg_L}{(g^*_L)^3}\left(\left(\frac{\pt g}{\pt t}\right)^*_{_L}+\sigma_1\left(\frac{\pt g}{\pt x}\right)^*_{_L}\right)\notag\\
&+\frac{2}{(g^*_L)^3}((u_L-u^*_L)(g_L+g^*_L)-f^*_Lb_L(g_L-g^*_L))\left(\left(\frac{\pt g}{\pt t}\right)^*_{_L}+\sigma_1\left(\frac{\pt g}{\pt x}\right)^*_{_L}\right)\notag\\
&-\frac{b_L(g^*_L+g_L)}{(g^*_L)^2}\left(\left(\frac{\pt f}{\pt t}\right)_L+\sigma_1\left(\frac{\pt f}{\pt x}\right)_L\right)\notag\\
&-\frac{f_L(g_L+g^*_L)-f^*_L(g_L-g^*_L)}{(g^*_L)^2}\left(\left(\frac{\pt b}{\pt t}\right)_L+\sigma_1\left(\frac{\pt b}{\pt x}\right)_L\right)\notag\\
&+\frac{b^*_L(g_L+g^*_L)+b_L(g_L-g^*_L)}{(g^*_L)^2}\left(\left(\frac{\pt f}{\pt t}\right)^*_{_L}+\sigma_1\left(\frac{\pt f}{\pt x}\right)^*_{_L}\right)\notag\\
&+\frac{f^*_L(g_L+g^*_L)}{(g^*_L)^2}\left(\left(\frac{\pt b}{\pt t}\right)^*_{_L}+\sigma_1\left(\frac{\pt b}{\pt x}\right)^*_{_L}\right)\notag\\
&-\frac{u_L-u^*_L-f^*_Lb_L}{(g^*_L)^2}\left(\left(\frac{\pt g}{\pt t}\right)_L+\sigma_1\left(\frac{\pt g}{\pt x}\right)_L\right)\notag\\
&-\frac{u_L-u^*_L+f^*_Lb_L}{(g^*_L)^2}\left(\left(\frac{\pt g}{\pt t}\right)^*_{_L}+\sigma_1\left(\frac{\pt g}{\pt x}\right)^*_{_L}\right)=0.\notag
\end{align}
By rearranging these terms, we have
\begin{align}
&~(g^*_L)^2\left[\left(\frac{\pt q}{\pt t}\right)^*_{_L}+\sigma_1\left(\frac{\pt q}{\pt x}\right)^*_{_L}\right]+\frac{2}{g^*_L}\Delta_1\left[\left(\frac{\pt g}{\pt t}\right)^*_{_L}+\sigma_1\left(\frac{\pt g}{\pt x}\right)^*_{_L}\right]\notag\\
&-\Delta_5\left[\left(\frac{\pt f}{\pt t}\right)^*_{_L}+\sigma_1\left(\frac{\pt f}{\pt x}\right)^*_{_L}\right]-\Delta_6\left[\left(\frac{\pt b}{\pt t}\right)^*_{_L}+\sigma_1\left(\frac{\pt b}{\pt x}\right)^*_{_L}\right]\notag\\
=&~ (g_L)^2\left[\left(\frac{\pt q}{\pt t}\right)_L+\sigma_1\left(\frac{\pt q}{\pt x}\right)_L\right]+\Delta_4\left[\left(\frac{\pt g}{\pt t}\right)_L+\sigma_1\left(\frac{\pt g}{\pt x}\right)_L\right]\notag\\
& -\Delta_2\left[\left(\frac{\pt f}{\pt t}\right)_L+\sigma_1\left(\frac{\pt f}{\pt x}\right)_L\right]-\Delta_3\left[\left(\frac{\pt b}{\pt t}\right)_L+\sigma_1\left(\frac{\pt b}{\pt x}\right)_L\right].\notag
\end{align}
By using \eqref{derivsofU5}-\eqref{derivsofU8}, the derivatives of $x$ can be replaced by ones of $t$. Then we have
\begin{align}
\left[(g^*_L)^2(1-\sigma_1(\vartheta_3)^*_L)+2g^*_L\sigma_1 \Delta_1(\vartheta_2)^*_L\right]&\left(\frac{\pt q}{\pt t}\right)^*_L +\left[\sigma_1 (\vartheta_2)^*_L(g^*_L q^*_L)^2+\frac{2\Delta_1}{g^*_L}(1-\sigma_1 (\vartheta_3)^*_L)\right]\left(\frac{\pt g}{\pt t}\right)^*_L \notag\\
-\left[\left(1-\frac{4\sigma_1}{3u^*_L}\right)\Delta_5+\frac{2\sigma_1}{3(f^*_L)^2}\Delta_6\right]&\left(\frac{\pt f}{\pt t}\right)^*_L-\left[\left(1-\frac{4\sigma_1}{3u^*_L}\right)\Delta_6+\frac{2\sigma_1}{3(b^*_L)^2}\Delta_5\right]\left(\frac{\pt b}{\pt t}\right)^*_L \notag\\
&~~\quad \qquad-\frac{2\sigma_1(\vartheta_1)^*_L}{3u^*_L}[q^*_L(g^*_L)^2+{{2}\Delta_1]\left(\frac{\pt (fb)}{\pt t}\right)^*_L}\notag\\
=(g_L)^2[1-\sigma_1(\vartheta_3)_L+\sigma_1(\vartheta_2)_L\Delta_4]&\left(\frac{\pt q}{\pt t}\right)_L +[(g_Lq_L)^2\sigma_1(\vartheta_2)_L+(1-\sigma_1(\vartheta_3)_L)\Delta_3]\left(\frac{\pt g}{\pt t}\right)_L \notag\\
-\left[\Delta_2(1-\frac{4\sigma_1}{3u_L})+\frac{2\sigma_1}{3(f_L)^2}\Delta_3\right]&\left(\frac{\pt f}{\pt t}\right)_L-\left[\frac{2\sigma_1}{3(b_L)^2}\Delta_2+\Delta_3(1-\frac{4\sigma_1}{3u_L})\right]\left(\frac{\pt b}{\pt t}\right)_L\notag\\
&~~\quad \qquad-\frac{2\sigma_1(\vartheta_1)_L}{3u_L}[q_L(g_L)^2+\Delta_4g_L]\left(\frac{\pt (fb)}{\pt t}\right)_L.\label{eta_shock}
\end{align}
In view of \eqref{xi_shock}, \eqref{tau_shock}, and \eqref{eta_shock}, we obtain the linear system \eqref{Linear_system_Shock}.
\end{proof}
The discussion of 2- and 3-contact waves is similar to the one for the case of Lemma \ref{l-3.2} and thus we directly proceed with the resolution of the 4-rarefaction wave.
\subsection{Resolution of the 4-rarefaction wave}
Here we use the same one-to-one correspondence $(x, t)\rw(\alpha, \beta)$ as shown in the 
1-rarefaction wave.
Thanks to the asymptotic of the GRP to the associated Riemann problem at the singularity
point $(0, 0)^\top$, we denote by $\alpha_R$ the slope of the wave head, by $\alpha$ the speed of the wave speed
inside the rarefaction wave, and by $\alpha^*_R$ the speed of wave tail. Then we have the following
fact.

\begin{prop}\label{prop-singularity_R_2} Consider the curved rarefaction wave associated with the fourth characteristic field $fb+\frac 32 gq$ and denote  $\Theta_R(\alpha):=\frac{\pt t}{\pt \beta}(\al,0)$.  Then we have
\begin{align}
\Theta_R(\alpha)=\Theta_R(\alpha_R)\dfrac{u_R-3v_R}{u_R-3v(\alpha,0)}.\label{thetar}
\end{align}
\end{prop}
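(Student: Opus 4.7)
My plan is to follow precisely the template of the proof of Proposition~\ref{prop-singularity}, adapted to the 4-rarefaction setting where the roles of the two characteristic directions are swapped. The main inputs remain the characteristic coordinate relations \eqref{characteristic-coordinate}, and the key structural difference is that across a 4-rarefaction the Riemann invariants dictate that $u=fb$ is constant (rather than $\eta$).

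First, I would compute the mixed partial $\partial^2 x/\partial\alpha\partial\beta$ in two ways from \eqref{characteristic-coordinate} and subtract, obtaining the identity
\begin{equation*}
\left(\tfrac 32 v-\tfrac 12 u\right)\frac{\partial^2 t}{\partial\alpha\partial\beta}
=-\frac{\partial t}{\partial\beta}\,\frac{\partial}{\partial\alpha}\!\left(u+\tfrac 32 v\right)+\tfrac 32\,\frac{\partial t}{\partial\alpha}\,\frac{\partial u}{\partial\beta},
\end{equation*}
which is the same identity \eqref{t_alphabeta} used for the 1-rarefaction wave. Now the asymptotic resolution is performed at $\beta=0$ instead of $\alpha=0$. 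I normalize by setting $t(\alpha,0)=0$, which gives $\partial t/\partial\alpha(\alpha,0)=0$. Inside the 4-rarefaction fan we have $\alpha=x/t=u+\tfrac 32 v$, hence $\partial(u+\tfrac 32 v)/\partial\alpha(\alpha,0)=1$, while $\partial(\tfrac 32 u)/\partial\beta$ is assumed bounded as $\beta\to 0^+$ (the analogue of the boundedness assumption used before).

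Passing to the limit $\beta\to 0^+$, the second term on the right-hand side vanishes and the coefficient of $\partial t/\partial\beta$ reduces to $-1$, producing the ODE
\begin{equation*}
\frac{d\Theta_R}{d\alpha}(\alpha)=\frac{\partial^2 t}{\partial\alpha\partial\beta}(\alpha,0)=\frac{-2}{3v-u}\,\Theta_R(\alpha)=\frac{2}{u-3v}\,\Theta_R(\alpha).
\end{equation*}
At this stage I invoke the structural feature of the 4-rarefaction from \eqref{R4}: $f=f_R$ and $b=b_R$, so $u=fb\equiv u_R$ is constant across the wave. Consequently, along $\beta=0$ we have $\alpha=u_R+\tfrac 32 v$ and $d\alpha=\tfrac 32\,dv$, which makes the change of variable from $\alpha$ to $v$ completely explicit:
\begin{equation*}
\int_{\alpha_R}^{\alpha}\frac{2}{u-3v}\,d\alpha'
=\int_{v_R}^{v(\alpha,0)}\frac{3}{u_R-3v}\,dv
=\ln\!\left(\frac{u_R-3v_R}{u_R-3v(\alpha,0)}\right).
\end{equation*}
Exponentiating this closed-form integral yields \eqref{thetar}.

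I do not anticipate a serious obstacle: the derivation is a near-mirror image of Proposition~\ref{prop-singularity}. The only delicate point—and where care is required—is the asymmetry between the two rarefactions in terms of which Riemann invariants degenerate: for the 1-rarefaction the integration exploited $\eta=(u+v)v^{-1/4}=\mathrm{const}$ and produced the $v^{3/4}$ factor, whereas here the simplification is much cleaner because the 4-field is Temple, $u\equiv u_R$ throughout, and the integral is elementary. One should also verify that $u_R-3v(\alpha,0)$ does not vanish inside the fan, which is automatic since $u_R=f_R b_R<0$ while $v>0$ in the state space $\mathcal U$, so the denominator is strictly negative and bounded away from zero.
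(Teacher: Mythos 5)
Your proposal is correct and follows essentially the same route as the paper's proof: both reuse the mixed-partial identity \eqref{t_alphabeta} with the normalizations $t(\alpha,0)=0$ and $\frac{\partial}{\partial\alpha}\left(u+\frac 32 v\right)(\alpha,0)=1$, derive the ODE $\frac{\partial}{\partial\alpha}\Theta_R=\frac{2}{u-3v}\Theta_R$, and evaluate the resulting integral using $u\equiv u_R$ and $\alpha=u_R+\frac 32 v$ across the 4-rarefaction. The only cosmetic difference is that you change variables to $v$ while the paper integrates in $\alpha$ via $u-3v=3u_R-2\alpha$; the outcome is identical.
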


\begin{proof}
Looking back to \eqref{t_alphabeta}, we can set $t(\alpha,0)=0$ and $\frac{\pt }{\pt\alpha}(u+\frac 32 v)(\alpha,0)=1$.
If $\lim_{\beta\rw 0^+}\frac{\pt}{\pt\beta}u(\alpha,\beta)$ is bounded, we are led to
\begin{align}
\frac{\pt^2 t}{\pt\alpha\pt\beta}(\alpha,0)=\frac{2}{u-3v}\frac{\pt t}{\pt\beta}(\alpha,0).\notag
\end{align}
The above equation can be written as
\begin{align}
\frac{\pt}{\pt\alpha}\Theta_R(\alpha)=\frac{2}{u-3v}\Theta_R(\alpha),\notag
\end{align}
which leads to
\begin{align}
\frac{\Theta_R(\alpha)}{\Theta_R(\alpha_R)}=\exp\left(\int_{\alpha_R}^{\alpha}\frac{2}{u-3v}~~ d\alpha\right).
\end{align}

The Riemann variants of the 4-rarefaction wave are $f$, $b$ and $\tau=g/q$.
Notice that if $\beta\rw 0$, $\alpha$ can be selected as $\alpha=\frac{x}{t}=u+\frac{3}{2}v$ and $u=fb$ is also constant across the 4-rarefaction wave. This gives 
\begin{align}
    v=\frac 23(\alpha-u_R). 
\end{align}
Thus \eqref{thetar} can be further simplified when $\beta\rw 0$ as follows,
\begin{align*}
\displaystyle\int_{\alpha_R}^{\alpha}\dfrac{2}{u-3v} d\alpha&= \displaystyle\int_{\alpha_R}^{\alpha}\dfrac{2}{3u_R-2\alpha} d\alpha= \ln \left(\dfrac{3u_R-2\alpha_R}{3u_R-2\alpha}\right)
\end{align*}
Then 
\begin{align}
\Theta_R(\alpha)=\Theta_R(\alpha_R)\dfrac{u_R-3v_R}{u_R-3v(\alpha,0)}.\label{thetar1}
\end{align}
\end{proof}
Similar to the 1-rarefaction wave, we now proceed to obtain a linear system for the time derivatives of the conservative variabless across the 4-rarefaction wave. We express this in the following lemma.
\begin{lemma}\label{l-A.2}
    Let the fourth wave be a rarefaction wave. The trace values \[
    \dfrac{\partial\mathbf{U}}{\partial t}(\alpha, 0)=\left[
    \dfrac{\partial f}{\partial t},
    \dfrac{\partial  b}{\partial 
 t},
    \dfrac{\partial  g}{\partial 
 t},
    \dfrac{\partial  q}{\partial  t}\right]^\top(\alpha, 0)
    \]
    satisfy the following linear equations 
\begin{align}\label{Linear_system_R_4}
    \mathbf{A_{R}}(\alpha, 0) \dfrac{\partial \mathbf{U}}{\partial t}(\alpha, 0)= \mathbf{D_{R}}(\alpha, 0), \quad\forall \alpha\in[\alpha^*_R,\alpha_{R}],
\end{align}
where
\begin{align}
    \mathbf{A_{R}}(\alpha, 0)=\begin{bmatrix}
    b(\alpha, 0) & f(\alpha, 0)&0 &0 \\
        \dfrac{1}{b(\alpha, 0)}, & -\dfrac{f}{b^2}(\alpha, 0)& 0 &0\\
        0&0& \dfrac{1}{q}(\alpha, 0)& -\dfrac{g}{q^2}(\alpha, 0)
    \end{bmatrix}, \quad 
\end{align}
and 
\begin{align}
    \mathbf{D_{R}}(\alpha, 0)=\left[
       \left(\dfrac{\partial u}{\partial t}\right)_{R},
    \left(\dfrac{\partial \xi}{\partial t}\right)_{R}, 
   \dfrac{(2u+v)(\alpha, 0)}{(2u_R+v_R)}\left(\dfrac{v(\alpha, 0)}{v_R}\right)^{\frac{1}{2}}\left(\dfrac{\partial \tau}{\partial t}\right)_{R}\right]^\top.
\end{align}
\end{lemma}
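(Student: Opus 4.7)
The plan is to adapt the argument of Lemma~\ref{l-3.1} to the 4-rarefaction fan, working now with $\alpha$ as the fan parameter and $\beta = 0$ held fixed. By \eqref{Rinvariants}, the quantities $u = fb$, $\xi = f/b$, and $\tau = g/q$ are 4-Riemann invariants, so the three rows of the claimed linear system correspond exactly to the instantaneous time derivatives of these three quantities inside the fan. The matrix $\mathbf{A_R}(\alpha,0)$ is then the Jacobian of $(u,\xi,\tau)$ with respect to $(f,b,g,q)$ via the elementary relations $\partial_t u = b\,\partial_t f + f\,\partial_t b$, $\partial_t \xi = b^{-1}\partial_t f - fb^{-2}\partial_t b$, and $\partial_t \tau = q^{-1}\partial_t g - gq^{-2}\partial_t q$.

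First, I would transform the transport equations for $u,\xi,\tau$ from \eqref{Rtransformed} into the characteristic coordinates $(\alpha,\beta)$ using \eqref{characteristic-coordinate}. For $u$ and $\xi$, whose transport speeds are $3u/2$ and $u/2$ respectively, the identity $\partial x/\partial \alpha = (3u/2)\,\partial t/\partial \alpha$ yields $\partial u/\partial \alpha = (\partial t/\partial \alpha)(u_t + (3u/2)u_x) = 0$, and analogously $\partial \xi/\partial \alpha = 0$; both are therefore constant along 1-characteristics. Differentiating the complementary expressions $\partial u/\partial \beta$ and $\partial \xi/\partial \beta$ with respect to $\alpha$, passing to $\beta \to 0^+$, and invoking Proposition~\ref{prop-singularity_R_2} collapses the resulting ODEs to give $(\partial u/\partial t)(\alpha,0) = (\partial u/\partial t)_R$ and $(\partial \xi/\partial t)(\alpha,0) = (\partial \xi/\partial t)_R$ throughout the fan, which supplies the first two rows of the claimed system.

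For $\tau$, whose 3-transport speed $u + v/2$ differs from the 4-characteristic speed $u + 3v/2$ by a full factor of $v$, the derivation is genuinely nontrivial. Mimicking the $\xi$-computation in the proof of Lemma~\ref{l-3.1}, I would differentiate the characteristic-coordinate expression for $\partial \tau/\partial \beta$ with respect to $\alpha$, let $\beta \to 0^+$, and obtain an ODE in $\alpha$ for the trace $(\partial \tau/\partial \beta)(\alpha,0)$. Integrating this ODE requires evaluating an exponential of $\int 2/(v-u)\,d\alpha$. The crucial simplification is that $u$ is constant along the fan (since $u$ is a 4-Riemann invariant), so $\alpha = u_R + (3/2)v$ makes $v$ an affine function of $\alpha$ and reduces the integral to an elementary one in $v$ alone. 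A short algebraic manipulation then produces the factor $(2u+v)/(2u_R+v_R)\cdot(v/v_R)^{1/2}$ in the last entry of $\mathbf{D_R}(\alpha,0)$.

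I expect the main obstacle to be this last integration for $\tau$ together with tracking the precise algebraic form of the resulting coefficient; once the 4-invariance of $u$ is exploited to rewrite the integrand purely in terms of $v$, the computation proceeds by direct antidifferentiation in complete analogy with the $\Upsilon_L^\tau$ calculation in Lemma~\ref{l-3.1}. Combining the three resulting scalar identities for $\partial_t u$, $\partial_t \xi$, and $\partial_t \tau$ with the chain-rule expressions above then assembles the stated $3\times 4$ linear system \eqref{Linear_system_R_4}, completing the proof.
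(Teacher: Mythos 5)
Your strategy is essentially the paper's: identify $u=fb$, $\xi=f/b$, $\tau=g/q$ as the quantities regular across the 4-fan, derive ODEs in $\alpha$ for the traces of their $\beta$-derivatives via the characteristic coordinates \eqref{characteristic-coordinate} and Proposition \ref{prop-singularity_R_2}, integrate using the fact that $u\equiv u_R$ across the fan makes $v=\tfrac{2}{3}(\alpha-u_R)$ affine in $\alpha$, and assemble the three scalar identities with the chain rule into \eqref{Linear_system_R_4}. Two intermediate claims are wrong as stated, though neither derails the argument since you also describe the correct ODE route. First, $\partial\xi/\partial\alpha=(\partial t/\partial\alpha)\left(\xi_t+\tfrac{3}{2}u\,\xi_x\right)=u\,(\partial t/\partial\alpha)\,\xi_x$, which is not identically zero: $\xi$ is transported at speed $u/2$, not $3u/2$, so it is \emph{not} constant along 1-characteristics (your claim is correct for $u$ only). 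On the trace $\beta=0$ the $\alpha$-derivative does vanish because $\partial t/\partial\alpha(\alpha,0)=0$, but that only gives constancy of the value $\xi(\alpha,0)=\xi_R$, not of the time derivative; the identity $(\partial\xi/\partial t)(\alpha,0)=(\partial\xi/\partial t)_R$ emerges only after the ODE computation, in which $\Upsilon_R^{\xi}=\frac{u_R+3v(\alpha,0)}{u_R+3v_R}$ exactly cancels against $\Theta_R$ to give $\xi_x(\alpha,0)=(\xi_x)_R$, combined with the constancy of $u$. Second, the coefficient in the ODE for $(\partial\tau/\partial\beta)(\alpha,0)$ is $\frac{u-v}{v(u-3v)}=\frac{2}{u-3v}+\frac{1}{v}$, not $\frac{2}{v-u}$; the first summand reproduces $\Theta_R$ and the second yields $\Upsilon_R^{\tau}=(v(\alpha,0)/v_R)^{3/2}$, which after returning to $(x,t)$-coordinates and using $\tau_t=-(u+\tfrac{1}{2}v)\tau_x$ gives the stated factor. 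A further mechanical point: the mixed partial $\partial^2\tau/\partial\alpha\partial\beta$ must be obtained by differentiating the expression for $\partial\tau/\partial\alpha$ with respect to $\beta$ (so the uncontrolled term is annihilated by $\partial t/\partial\alpha(\alpha,0)=0$), rather than differentiating the $\partial\tau/\partial\beta$ expression with respect to $\alpha$ as you propose, where the factor $\partial t/\partial\beta(\alpha,0)=\Theta_R(\alpha)\neq 0$ leaves an unknown derivative $\partial(v\tau_x)/\partial\alpha$ in play.
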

\begin{proof}
By using the equation for the Riemann invariant $\tau$ from \eqref {Rtransformed},
it can be rewritten as
\begin{align}
&\frac{\pt\tau}{\pt t}+\frac 32 u\frac{\pt\tau}{\pt x} = \frac 12(u-v)\frac{\pt\tau}{\pt x},\ \frac{\pt\tau}{\pt t}+\left(u+\frac 32 v\right)\frac{\pt\tau}{\pt x} = v\frac{\pt\tau}{\pt x}.\notag
\end{align}
By changing $(x,t)$ to the characteristic coordinates $(\alpha,\beta)$ for $\tau$, it follows
\begin{align}
&\frac{\pt\tau}{\pt\alpha}=\frac{\pt t}{\pt\alpha}\left(\frac{\pt\tau}{\pt t}+\frac 32 u\frac{\pt\tau}{\pt x}\right)=\frac 12(u-v)\frac{\pt t}{\pt\alpha}\frac{\pt\tau}{\pt x},\label{tau1}\\
&\frac{\pt\tau}{\pt\beta}=\frac{\pt t}{\pt\beta}\left(\frac{\pt\tau}{\pt t}+\left(u+\frac 32 v\right)\frac{\pt\tau}{\pt x}\right)=v\frac{\pt t}{\pt\beta}\frac{\pt\tau}{\pt x}.\label{tau2}
\end{align}
We differentiate the first formula with respect to $\beta$, and obtain
\begin{align}
\frac{\pt^2\tau}{\pt\alpha\pt\beta}=&\frac 12(u-v)\frac{\pt^2 t}{\pt\alpha\pt\beta}
\frac{\pt\tau}{\pt x}+\frac 12\frac{\pt t}{\pt \alpha}\frac{\pt}{\pt\beta}\left((u-v)\frac{\pt\tau}{\pt x}\right).\notag
\end{align}
Let $\beta\rw 0^+$ and use the same strategy as for $\Theta_R(\alpha)$ to derive an ODE for $\frac{\pt \tau}{\pt\beta}(\alpha,0)$ given by
\begin{align}
\frac{\pt}{\pt\alpha}\left(\frac{\pt \tau}{\pt\beta}(\alpha,0)\right)&=\frac{u-v}{u-3v}(\alpha,0)\frac{\pt t}{\pt\beta}(\alpha,0)\frac{\pt \tau}{\pt x}(\alpha,0)=\frac{u-v}{v(u-3v)}(\alpha,0)\frac{\pt \tau}{\pt\beta}(\alpha,0).\notag
\end{align}
Then we have
\begin{align}
&\frac{\pt \tau}{\pt\beta}(\alpha,0)=\frac{\pt \tau}{\pt\beta}(\alpha_R,0)\Theta_R(\alpha; \alpha_R)\Upsilon_R^{\tau}(\alpha;\alpha_R),\notag\\
&\Upsilon_R^{\tau}(\alpha;\alpha_R):=\exp\left(\int_{\alpha_R}^{\alpha}\frac{1}{v}d\alpha\right)=\exp\left(\int_{\alpha_R}^{\alpha}\frac{3}{2(\alpha-u_R)}d\alpha\right)=\left(\dfrac{v(\alpha,0)}{v_R}\right)^{\frac 32}.\notag
\end{align}
We go back to the $(x, t)$-coordinates, and  by using \eqref{tau2}, we obtain
\begin{align}
\frac{\pt\tau}{\pt x}(\alpha,0)=\frac{\pt}{\pt x}\left(\frac{g}{q}\right)(\alpha,0)=\left(\dfrac{v(\alpha,0)}{v_R}\right)^{\frac 12}\left(\frac{\pt\tau}{\pt x}\right)_R.
\end{align}
Therefore, we have 
\begin{align}
\left(\frac{\pt \tau}{\pt t}\right)(\alpha, 0)&=-\left(u+\frac 12v\right)(\alpha, 0)\left(\frac{\pt \tau}{\pt x}\right)(\alpha, 0)\notag\\
&=-\left(u+\frac 12v\right)(\alpha, 0)\left(\dfrac{v(\alpha, 0)}{v_R}\right)^{\frac 12}\left(\frac{\pt\tau}{\pt x}\right)_R\notag\\
&=\frac{(2u+v)(\alpha, 0)}{2u_R+v_R}\left(\dfrac{v(\alpha, 0)}{v_R}\right)^{\frac 12}\left(\dfrac{\partial \tau}{\partial t}\right)_R.\label{R4-tau}
\end{align}
The functions $f$ and $b$ are expected to be regular inside the 4-rarefaction wave at the singularity $(0, 0)^\top$. Since $b$ is assumed to be negative, $u=fb$ and $\xi=f/b$ are also regular at the singularity $(0, 0)^\top$.

Consider $\xi$ for the first step. As we know, $\xi$ satisfies the second equation in \eqref{Rtransformed}. It can be rewritten as 
\begin{align}
&\frac{\pt\xi}{\pt t} + \frac 32 u\frac{\pt\xi}{\pt x}=u\frac{\pt\xi}{\pt x},  \frac{\pt\xi}{\pt t} + (u+\frac 32v)\frac{\pt\xi}{\pt x}=\frac 12(u+3v)\frac{\pt\xi}{\pt x}.\notag
\end{align}
By using the relation between $(\alpha,\beta)$ and $(x,t)$, we have
\begin{align}
&\frac{\pt\xi}{\pt \alpha} = u\frac{\pt\xi}{\pt x}\frac{\pt t}{\pt \alpha},\notag\\
&\frac{\pt\xi}{\pt \beta} = \frac 12(u+3v)\frac{\pt\xi}{\pt x}\frac{\pt t}{\pt \beta}.\notag
\end{align}
Differentiating the first equation with respect to $\beta$, we see that
\begin{align}
&\frac{\pt^2\xi}{\pt \alpha\pt\beta} = \frac{\pt}{\pt\beta}\left(u\frac{\pt\xi}{\pt x}\right)\frac{\pt t}{\pt \alpha}+u\frac{\pt\xi}{\pt x}\frac{\pt^2 t}{\pt \alpha\pt\beta}.\notag
\end{align}
Let $\beta\rw 0$ and assume that $\frac{\pt}{\pt\beta}\left(u\frac{\pt\xi}{\pt x}\right)$ is bounded, then we arrive at
\begin{align}
&\frac{\pt^2\xi}{\pt \alpha\pt\beta}(\alpha,0) = \frac{2u}{u-3v}\frac{\pt\xi}{\pt x}\frac{\pt t}{\pt\beta}(\alpha,0)=\frac{4u}{(u-3v)(u+3v)}\frac{\pt \xi}{\pt\beta}(\alpha,0).\notag
\end{align}
Thus $\frac{\pt \xi}{\pt\beta}(\alpha,0)$ can be solved by
\begin{align}
&\frac{\pt \xi}{\pt\beta}(\alpha,0)=\frac{\pt \xi}{\pt\beta}(\alpha_R,0)\Theta_R(\alpha,\alpha_R)\Upsilon_R^{\xi}(\alpha;\alpha_R),\notag\\ 
&\Upsilon_R^{\xi}(\alpha;\alpha_R)=\exp\left(\int_{\alpha_R}^{\alpha}~\frac{2}{u+3v}d\alpha\right)=\exp\left(\int_{\alpha_R}^{\alpha}\frac{2}{2\alpha-u_R}~d\alpha\right)=\frac{u_R+3v(\alpha,0)}{u_R+3v_R}.\notag
\end{align}
In the $(x,t)$-coordinates, we have then
\begin{align}
&\frac{\pt \xi}{\pt x}(\alpha,0)=\left(\frac{\pt \xi}{\pt x}\right)_R.\notag
\end{align}
By using the equation of Riemann invariant $\xi$, we have
\begin{align}
&\frac{\pt \xi}{\pt t}(\alpha,0)=\left(\frac{\pt \xi}{\pt t}\right)_R.\label{R4-xi}
\end{align}
Similar idea can also be applied to $u$ and $\frac{\pt u}{\pt t}(\alpha,0)$ to get
\begin{align}
&\frac{\pt u}{\pt t}(\alpha,0)=\left(\frac{\pt u}{\pt t}\right)_R.\label{R4-u}
\end{align}
Thus, \eqref{R4-tau}, \eqref{R4-xi} and \eqref{R4-u} lead to the linear system \eqref{Linear_system_R_4}.
\end{proof}
\begin{remark}
The vector $\mathbf{D_R}$ is completely known in terms of the initial data of the generalized Riemann problem using \eqref{Rtransformed} similar to Lemma \ref{l-3.3}. Furthermore, the linear system for the fourth rarefaction wave for $\alpha=\alpha^*_R$ coincides with the linear system for the 4-shock wave obtained in Lemma \ref{l-3.3}. This fact resonates with the nature of the fourth characteristic field being a Temple field.  
\end{remark}
\subsection{Time-derivatives at singularity $(0, 0)^\top$ for the wave configuration $S_1+J_2+J_3+R_4$}
Similar to Section \ref{sec: R+J+J+S}, we have 12 unknown time derivatives in the intermediate states, out of which 6 unknown derivatives can be obtained using \eqref{eq: derivative_relations}. 
Then, based on Lemmas \ref{l-A.1}-\ref{l-A.2}, we have the following result for remaining time derivatives.
\begin{theorem}[Instantaneous time-derivatives for wave configuration  $S_1+J_2+J_3+R_4$]\label{non-sonic-case-wave-2}
\rm
The trace values $(\partial \mathbf{U}/\partial t)^*_L, (\partial \mathbf{U}/\partial t)^*_M$ and $(\partial \mathbf{U}/\partial t)^*_R$ in \eqref{GRP-value} are obtained by \eqref{eq: derivative_relations} and by solving an invertible system of linear equations for 
\[\dfrac{\partial\mathbf{U}}{\partial t}=\bigg[
    \left(\dfrac{\partial f}{\partial t}\right)^*_{_L},
   \left(\dfrac{\partial b}{\partial t}\right)^*_{_L},
     \left(\dfrac{\partial g}{\partial t}\right)^*_{_L},
    \left(\dfrac{\partial q}{\partial t}\right)^*_{_L},
    \left(\dfrac{\partial g}{\partial t}\right)^*_{_R},
   \left(\dfrac{\partial q}{\partial t}\right)^*_{_R}\bigg]^{\top}\]
   given by
\begin{align}\label{main_linear_system}
    \mathbf{A(U^*)}\dfrac{\partial \mathbf{U}}{\partial t}= \mathbf{D(U_{L}, U_{R},U^*,U_{L}^{\prime}, U_{R}^{\prime})}.
\end{align}
The coefficient matrix $\mathbf{A}$ and the vector $\mathbf{D}$ only depend on the initial data or the intermediate states of the associated Riemann problem \eqref{RP-data} and are given by
\begin{align}
    \mathbf{A(U^*)}=\begin{bmatrix}
        b^*_L& f^*_L&0&0&0&0\\
        \dfrac{1}{b^*_L}&-\dfrac{f^*_L}{(b^*_L)^2}&0 &0&0&0\\
        \mathcal{A}^*_L & \mathcal{B}^*_L & \mathcal{C}^*_L& \mathcal{D}^*_L&0&0\\
        0 & 0 & \dfrac{1}{q^*_L}&-\dfrac{g^*_L}{(q^*_L)^2} &0&0\\
        0 & 0 &-q^*_L& -g^*_L& q^*_R& g^*_R\\
        0 & 0 & 0 & 0&\dfrac{1}{q^*_R}&-\dfrac{g^*_R}{(q^*_R)^2}\\
    \end{bmatrix},
\end{align}
 and
 \begin{align}
\mathbf{D(U_{L}, U_{R},U^*,U_{L}^{\prime}, U_{R}^{\prime})}=\begin{bmatrix}
    \left(\dfrac{\partial u}{\partial t}\right)_{R}\vspace{0.2 cm}\\
    \left(\dfrac{f_Rb_R}{f_Lb_L}\right)^{3/2}\left(\dfrac{\partial \xi}{\partial t}\right)_{L}\vspace{0.2 cm}\\
    \mathcal{A}_L\left(\dfrac{\partial f}{\partial t}\right)_L+\mathcal{B}_L\left(\dfrac{\partial b}{\partial t}\right)_L+\mathcal{C}_L\left(\dfrac{\partial g}{\partial t}\right)_L+\mathcal{D}_L\left(\dfrac{\partial q}{\partial t}\right)_L\vspace{0.2 cm}\\    \dfrac{(2u^*_L+v^*_L)(2u_L+v_L-2\sigma_1)}{(2u_L+v_L)(2u^*_L+v^*_L-2\sigma_1)}\left(\dfrac{\partial \tau}{\partial t}\right)_{L} \vspace{0.2 cm} \\
    0\vspace{0.2 cm}\\
\bigg(\dfrac{2u^*_R+v^*_R}{2u_R+v_R}\sqrt{\dfrac{v^*_R}{v_{R}}}\bigg)\bigg(\dfrac{\partial \tau}{\partial t}\bigg)_{R}
\end{bmatrix}, 
\end{align}
where the coefficients $\mathcal{A}^*_L, \mathcal{B}^*_L, \mathcal{C}^*_L, \mathcal{D}^*_L, \mathcal{A}_{L}, \mathcal{B}_{L}, \mathcal{C}_{L}$ and $\mathcal{D}_{L}$ are defined as in Lemma \ref{l-A.2}.
\end{theorem}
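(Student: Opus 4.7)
The plan is to mirror, step for step, the proof of Theorem \ref{non-sonic-case}, replacing the role of Lemma \ref{l-3.1} by Lemma \ref{l-A.1} (since the leftward wave is now a shock instead of a rarefaction) and the role of Lemma \ref{l-3.3} by Lemma \ref{l-A.2} (since the rightward wave is now a rarefaction instead of a shock). The key structural fact that legitimises this direct interchange is that the fourth characteristic field is a Temple field, so the linear system attached to the 4-rarefaction tail $\alpha = \alpha^*_R$ coincides with the one derived for the 4-shock in Lemma \ref{l-3.3}, as already noted in the remark following Lemma \ref{l-A.2}.

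First I would use the 2- and 3-contact relations \eqref{eq: derivative_relations} to reduce the twelve unknown time derivatives in the intermediate states to the six entries of $\partial \hat{\mathbf{U}}/\partial t$. These identities follow directly from the jump conditions \eqref{eq: 23a} and \eqref{eq: 23b}: across $J_2$ the quantities $u$, $g$, $q$ are Riemann invariants, and across $J_3$ the quantities $f$, $b$, $gq$ are Riemann invariants. Differentiating these invariants along the respective contact trajectories yields the six identifications in \eqref{eq: derivative_relations}.

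Next I would assemble the six linear equations comprising \eqref{main_linear_system}. Rows 1--4 arise from Lemma \ref{l-A.1} applied at $\beta = \beta^*_L$: the first two rows come from the $\xi$- and $u$-jump conditions across $S_1$ and produce the $2\times 2$ block with entries $b^*_L$, $f^*_L$, $1/b^*_L$, $-f^*_L/(b^*_L)^2$; the third row encodes the jump of the conservative variable $q$ and gives the coefficients $\mathcal{A}^*_L, \mathcal{B}^*_L, \mathcal{C}^*_L, \mathcal{D}^*_L$ defined in \eqref{shock-coefficients_1}--\eqref{shock-coefficients-2}; the fourth row is the $\tau$-jump relation across $S_1$. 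Row~5 expresses the continuity of $\partial(gq)/\partial t$ across $J_3$, coupling the $L$- and $R$-side unknowns. Row~6 comes from Lemma \ref{l-A.2} at the rarefaction tail $\alpha = \alpha^*_R$, i.e.\ the $\tau$-relation of the 4-rarefaction. The right-hand side is obtained by substituting the one-sided spatial derivatives $\bU_L'$, $\bU_R'$ into the Lax--Wendroff formulas \eqref{eq: left_derivs} and the Riemann-invariant transport equations \eqref{Rtransformed}, exactly as in Theorem \ref{non-sonic-case}.

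Finally, invertibility of $\mathbf{A(U^*)}$ follows from its block lower-triangular structure. The bottom-right $2\times 2$ block
\[
\begin{bmatrix} q^*_R & g^*_R \\ 1/q^*_R & -g^*_R/(q^*_R)^2 \end{bmatrix}
\]
has determinant $-2g^*_R/q^*_R \neq 0$ on $\mathcal{U}$. After Schur-complementing this block, the top-left $4\times 4$ block inherits a further $2\times 2$ sub-block with determinant $-2f^*_L/b^*_L \neq 0$. The step I expect to be the main obstacle is verifying that the remaining $2\times 2$ Schur complement involving $\mathcal{C}^*_L, \mathcal{D}^*_L$ is non-singular, since this requires a careful sign analysis of the coefficients in \eqref{shock-coefficients_1} using the admissibility conditions $f, g, q > 0$, $b < 0$, $fb + gq \ge 0$ that define $\mathcal{U}$, together with the Lax-admissibility $fb < f_Lb_L$ across $S_1$. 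Once this last determinant is shown to be nonzero, \eqref{main_linear_system} is uniquely solvable and the theorem follows.
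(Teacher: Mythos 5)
Your overall route is the same as the paper's (which gives no detailed argument beyond ``based on Lemmas \ref{l-A.1}--\ref{l-A.2}''): reduce the twelve unknowns to six via the contact relations \eqref{eq: derivative_relations}, assemble the six equations from the shock lemma, the rarefaction lemma and the $J_3$-coupling, and invoke the block structure for invertibility. However, there is one concrete misstep in your assembly. You claim that rows 1--4 all come from Lemma \ref{l-A.1} and, in particular, that row 1 is ``the $u$-jump condition across $S_1$.'' There is no such condition: $u=fb$ is \emph{not} preserved across a 1-shock --- the curve \eqref{S_2def} preserves $f/b$ and $g/q$ and carries the nonlinear $\Phi$-relation, and the Lax condition forces $fb<f_Lb_L$. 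Accordingly, Lemma \ref{l-A.1} supplies only \emph{three} relations (the $\xi$-relation, the $\tau$-relation, and the $\Phi$-based relation with coefficients $\mathcal{A}^*_L,\dots,\mathcal{D}^*_L$), which become rows 2, 4 and 3 of $\mathbf{A(U^*)}$. Row 1, with right-hand side $\left(\partial u/\partial t\right)_R$, must instead be imported from the \emph{right}: it is the relation $\left(\partial u/\partial t\right)(\alpha,0)=\left(\partial u/\partial t\right)_R$ of Lemma \ref{l-A.2} (equation \eqref{R4-u}) evaluated at the rarefaction tail $\alpha=\alpha^*_R$ and transported through $J_3$ and $J_2$ via \eqref{eq: derivative_relations} --- the exact mirror of how the first row of Theorem \ref{non-sonic-case} arises from the 4-shock relation \eqref{urelations}. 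Relatedly, the phrase ``Lemma \ref{l-A.1} applied at $\beta=\beta^*_L$'' conflates the shock with the rarefaction: the $\beta$-parametrization belongs to the curved 1-rarefaction of Lemma \ref{l-3.1}; the 1-shock relations of Lemma \ref{l-A.1} hold directly along the shock trajectory with no such parameter.

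On invertibility, your block analysis is correct as far as it goes: the $(1,2)$-rows/columns block and the $(5,6)$-block have determinants $-2f^*_L/b^*_L$ and $-2g^*_R/q^*_R$, both nonzero on $\mathcal{U}$, and the only nontrivial piece is the $2\times 2$ block pairing $(\mathcal{C}^*_L,\mathcal{D}^*_L)$ with the $\tau$-row $\left(1/q^*_L,\,-g^*_L/(q^*_L)^2\right)$. You honestly flag that you do not verify that this determinant is nonzero; the paper is equally silent on this point, so this is not a gap relative to the paper's own proof, but be aware that the one-line argument of Theorem \ref{non-sonic-case} (``each subblock has a nonzero determinant'') does not carry over verbatim here and would require the sign analysis you describe.
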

The construction of the acoustic case for this wave configuration is similar to that discussed in the Section \ref{sec: acoustic}. Therefore, for the sake of brevity, we do not discuss that here.

\section{Numerical tests}\label{sec: 5}
In this section, the accuracy of the GRP-based finite-volume method is investigated numerically. To display the advantage of the GRP method over other second-order methods, we also compare the results obtained by the GRP method with those for the Godunov method and the MUSCL method with the second-order RK (RK-2) time stepping. The exact Riemann solver is used in the MUSCL method. 

In the simulations, we use the same time step conditions as for the Godunov scheme and denote the number of cells by $N$. 
For the test cases with the smooth solution, we update
the slope by $\bm{\sigma}_j^{k+1}=(\mathbf{U}_{j+1/2}^{k+1, -}-\mathbf{U}_{j-1/2}^{k+1, -})/\Delta x
$ in \eqref{piecewise linear initial data}, where $\mathbf{U}_{j+1/2}^{k+1, -}$ is defined in \eqref{updated_slope_value}. 
Otherwise, in order to suppress local oscillations near discontinuities, the parameter $\theta$ in \eqref{slope_update} is taken from $[0, 2)$. In practice, we use large values of $\theta$ for less diffusive results and smaller values of $\theta$ near really sharp discontinuities to mitigate the oscillations.
\begin{example}{(Smooth travelling wave solutions for the system \eqref{eq: Main_system})}
\rm

In this example, a particular travelling wave solution for system \eqref{eq: Main_system} is considered and compared with the numerical solutions obtained from our proposed solver to check the accuracy. In particular, we look for a solution of the form
\begin{align}
    f(x, t)=F(\chi), \quad b(x, t)=B(\chi), \quad g(x, t)=G(\chi), \quad q(x, t)=Q(\chi),
\end{align}
where $\chi =x-ct$ for some $c\in \mathbb{R}$.

Then, system \eqref{eq: Main_system} can be reduced to  a system of ODEs given by
\begin{align*}
    -c F'(\chi)+\dfrac{d}{d\chi}\left(\dfrac{F^2 B}{2}\right)&=0,\\
    -c B'(\chi)+\dfrac{d}{d\chi}\left(\dfrac{F B^2}{2}\right)&=0,\\
    -c G'(\chi)+\dfrac{d}{d\chi}\left(\dfrac{G^2 Q}{2}+FBG\right)&=0,\\
    -c Q'(\chi)+\dfrac{d}{d\chi}\left(\dfrac{G Q^2}{2}+FBQ\right)&=0.
\end{align*}
Integrating these equations, we obtain
\begin{align*}
    F^2B-2cF&=A_1,\\
    FB^2-2cB&=A_2,\\
    G^2Q+2FBG-2cG&=A_3,\\
    GQ^2+2FBQ-2cQ&=A_4.
\end{align*}
Let us choose $A_1=A_2=A_3=A_4=0$, which implies that nontrivial travelling wave solutions of system \eqref{eq: Main_system} must satisfy
\begin{align}
    FB=2c,\quad GQ=-2c.
\end{align}
Note that in this case, all the characteristic fields of the system \eqref{eq: Main_system} become linearly degenerate and the existence of travelling wave solutions can be ensured (see e.g. \cite{liu2013existence}). Such a test case has been utilized for the test of accuracy of well-known methods for compressible Euler equations; see e.g. \cite{li2016high}.
\begin{table}[ht]
    \centering

\resizebox{\linewidth}{!}{
\begin{tabular}{|c|c|c|c|c|c|c|c|c|c|c|c|c|}
\hline
$ $ & \multicolumn{6}{c|}{\text{GRP}} & \multicolumn{6}{c|}{\text{MUSCL}} \\
\hline
$N$ & $L^1$\text{-error} & \text{Order} & $L^2$\text{-error} & \text{Order} & $L^\infty$\text{-error} & \text{Order} & $L^1$\text{-error} & \text{Order} & $L^2$\text{-error} & \text{Order} & $L^\infty$\text{-error} & \text{Order} \\
\hline
$20$ & $1.39\times10^{-1}$ & $-$ & $6.08\times10^{-2}$ & $-$ & $3.88\times10^{-2}$ & $-$ & $3.15\times10^{-1}$ & $-$ & $1.59\times10^{-1}$ & $-$ & $1.28\times10^{-1}$ & $-$ \\
$40$ & $3.50\times10^{-2}$ & $1.99$ & $1.54\times10^{-2}$ & $1.98$ & $1.04\times10^{-2}$ & $1.90$ & $8.10\times10^{-2}$ & $1.96$ & $4.17\times10^{-2}$ & $1.93$ & $3.37\times10^{-2}$ & $1.92$ \\
$80$ & $8.75\times10^{-3}$ & $2.00$ & $3.89\times10^{-3}$ & $1.99$ & $2.70\times10^{-3}$ & $1.94$ & $2.06\times10^{-2}$ & $1.97$ & $1.07\times10^{-2}$ & $1.96$ & $8.44\times10^{-3}$ & $2.00$ \\
$160$ & $2.19\times10^{-3}$ & $2.00$ & $9.76\times10^{-4}$ & $2.00$ & $6.89\times10^{-4}$ & $1.97$ & $5.18\times10^{-3}$ & $2.00$ & $2.70\times10^{-3}$ & $1.99$ & $2.24\times10^{-3}$ & $1.92$ \\
$320$ & $5.46\times10^{-4}$ & $2.00$ & $2.44\times10^{-4}$ & $2.00$ & $1.74\times10^{-4}$ & $1.99$ & $1.29\times10^{-3}$ & $2.00$ & $6.75\times10^{-4}$ & $2.00$ & $5.71\times10^{-4}$ & $1.97$ \\
$640$ & $1.37\times10^{-4}$ & $2.00$ & $6.11\times10^{-5}$ & $2.00$ & $4.36\times10^{-5}$ & $1.99$ & $3.24\times10^{-4}$ & $2.00$ & $1.69\times10^{-4}$ & $2.00$ & $1.44\times10^{-4}$ & $1.99$ \\
\hline
\end{tabular}}
  \caption{The errors and orders of convergence of the film height $f$ at $t = 3.0$ obtained by the GRP method and the MUSCL method with RK-2 time-stepping using
   $\rm{CFL}= 0.4$ for the travelling wave solution \eqref{eq: travelling_wave}.}
  \label{tab: nolimiters}
\end{table}

\begin{table}[ht]
  \centering
 \resizebox{\linewidth}{!}{
\begin{tabular}{|c|c|c|c|c|c|c|c|c|c|c|c|c|}
\hline
$ $ & \multicolumn{6}{c|}{\text{GRP}} & \multicolumn{6}{c|}{\text{MUSCL}} \\
\hline
$N$ & $L^1$\text{-error} & \text{Order} & $L^2$\text{-error} & \text{Order} & $L^\infty$\text{-error} & \text{Order} & $L^1$\text{-error} & \text{Order} & $L^2$\text{-error} & \text{Order} & $L^\infty$\text{-error} & \text{Order} \\
\hline
$20$ & $1.14\times10^{-1}$ & $-$ & $6.40\times10^{-2}$ & $-$ & $5.55\times10^{-2}$ & $-$ & $3.27\times10^{-1}$ & $-$ & $1.82\times10^{-1}$ & $-$ & $1.51\times10^{-1}$ & $-$ \\
$40$ & $3.02\times10^{-2}$ & $1.92$ & $1.69\times10^{-2}$ & $1.93$ & $1.59\times10^{-2}$ & $1.80$ & $8.79\times10^{-2}$ & $1.90$ & $5.38\times10^{-2}$ & $1.76$ & $4.66\times10^{-2}$ & $1.70$ \\
$80$ & $7.63\times10^{-3}$ & $1.99$ & $4.26\times10^{-3}$ & $1.99$ & $4.19\times10^{-3}$ & $1.93$ & $2.18\times10^{-2}$ & $2.01$ & $1.36\times10^{-2}$ & $1.98$ & $1.24\times10^{-2}$ & $1.91$ \\
$160$ & $1.91\times10^{-3}$ & $2.00$ & $1.07\times10^{-3}$ & $2.00$ & $1.06\times10^{-3}$ & $1.99$ & $5.34\times10^{-3}$ & $2.03$ & $3.35\times10^{-3}$ & $2.02$ & $3.10\times10^{-3}$ & $2.00$ \\
$320$ & $4.77\times10^{-4}$ & $2.00$ & $2.66\times10^{-4}$ & $2.00$ & $2.65\times10^{-4}$ & $2.00$ & $1.32\times10^{-3}$ & $2.01$ & $8.30\times10^{-4}$ & $2.01$ & $7.72\times10^{-4}$ & $2.01$ \\
$640$ & $1.19\times10^{-4}$ & $2.00$ & $6.66\times10^{-5}$ & $2.00$ & $6.63\times10^{-5}$ & $2.00$ & $3.29\times10^{-4}$ & $2.01$ & $2.07\times10^{-4}$ & $2.01$ & $1.92\times10^{-4}$ & $2.00$ \\
\hline
\end{tabular}}
\caption{The errors and orders of convergence of the concentration gradient $b$ at $t = 3.0$ obtained by the GRP method and the MUSCL method with RK-2 time-stepping using
   $\rm{CFL}= 0.4$ for the travelling wave solution \eqref{eq: travelling_wave}.}
  \label{tab: error_b}
\end{table}

We set $c=-1$ and consider the following $2\pi$-periodic solution of the system \eqref{eq: Main_system} 
\begin{align}\label{eq: travelling_wave}
    f(x, t)&= 2+\sin(x+t),\, b(x, t)=-\dfrac{2}{f(x, t)}, \, g(x, t)= 2, \, q(x, t)= 1.
\end{align}
Notably, this type of solution also corresponds to the solution of the one-layer thin film flow model, as $f$ and $b$ are independent of the evolution of $g$ and $q$. Physically, such a solution represents the case where the Marangoni effect in the bottom layer dominates the flow towards the left.

To investigate the accuracy of our proposed solver and the convergence rates, we compute $L^1$-, $L^2$- and $L^{\infty}$-errors in the film height $f$ and the concentration gradient $b$ at time $t=3.0$ with $\rm{CFL}=0.4$ and present them in  Tables \ref{tab: nolimiters}, \ref{tab: error_b}. We also plot the convergence rates for this case in \ref{fig:convergence_f}-\ref{fig:convergence_b}. It can be observed from Tables \ref{tab: nolimiters}, \ref{tab: error_b} that the orders of convergence for both variables are almost $2$ for this test case. Moreover, the numerical error of the GRP method is much less than that of the MUSCL method.
\begin{figure}
    \centering
    \begin{subfigure}[b]{0.45\linewidth}
  \includegraphics[width={\linewidth}]{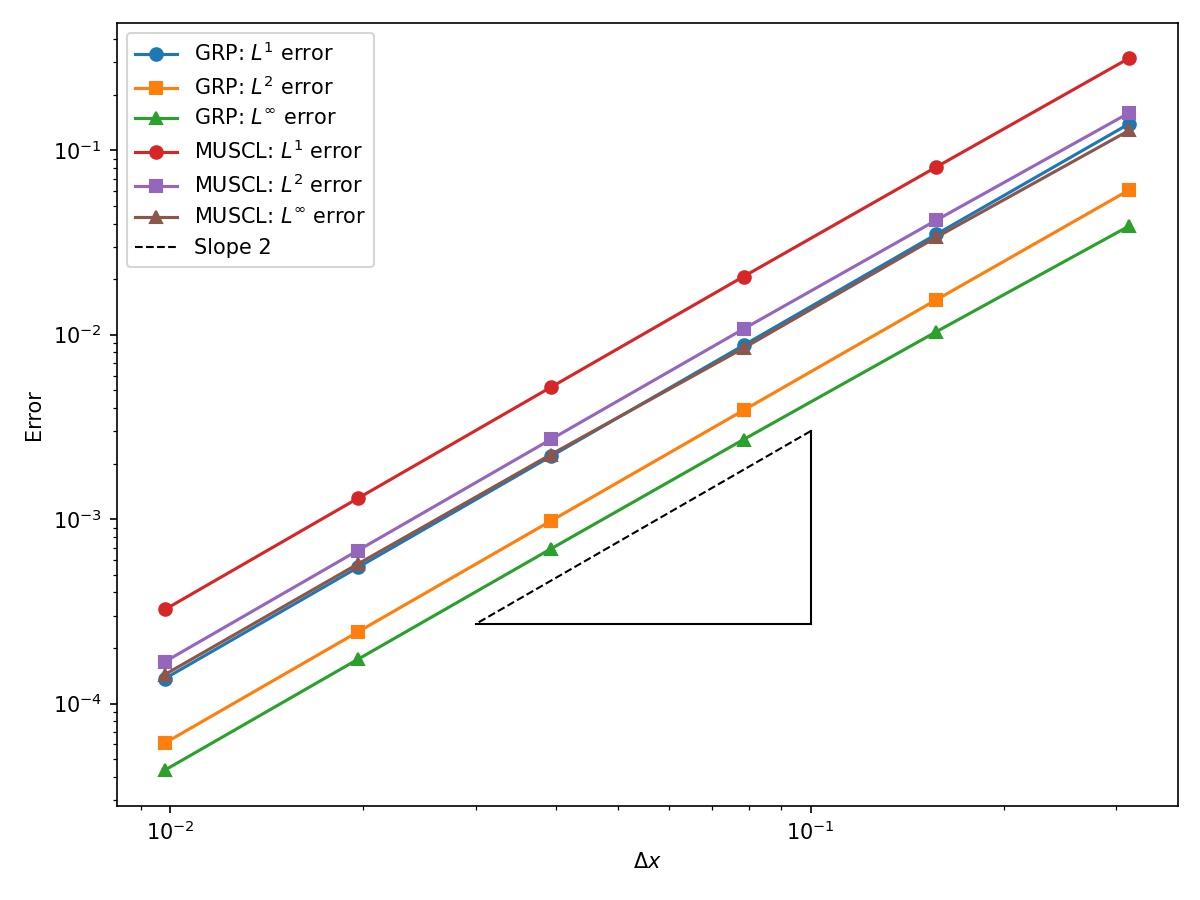}
    \caption{Convergence rates for film thickness $f$.}
    \label{fig:convergence_f}
    \end{subfigure}
    \hfill%
    \begin{subfigure}[b]{0.45\linewidth}
  \includegraphics[width={\linewidth}]{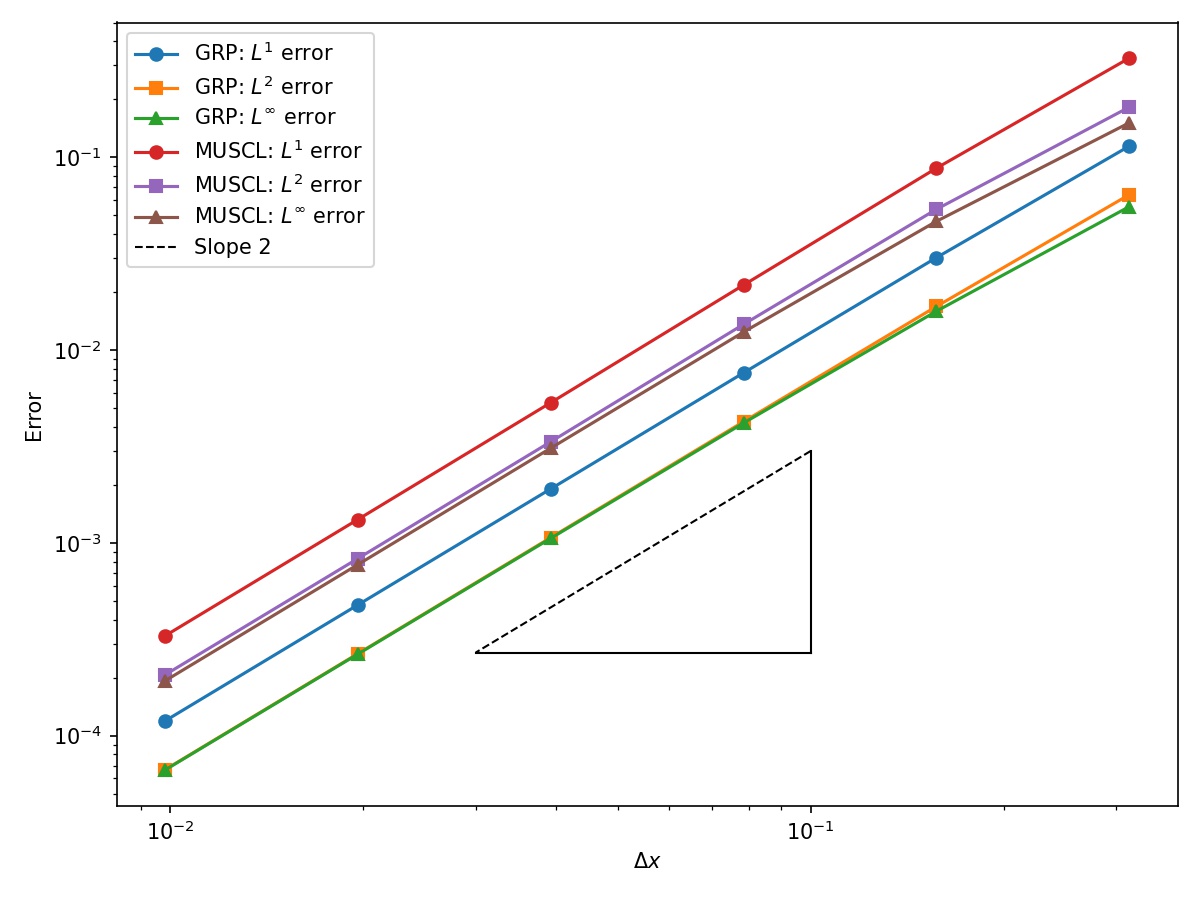}
    \caption{Convergence rates for concentration gradient $b$.}
    \label{fig:convergence_b}
    \end{subfigure}
\end{figure}

\end{example}

\begin{example}{(Pure rarefaction)}\label{pure_rarefaction}
\rm
\begin{figure}
    \centering
    \includegraphics[width=\linewidth]{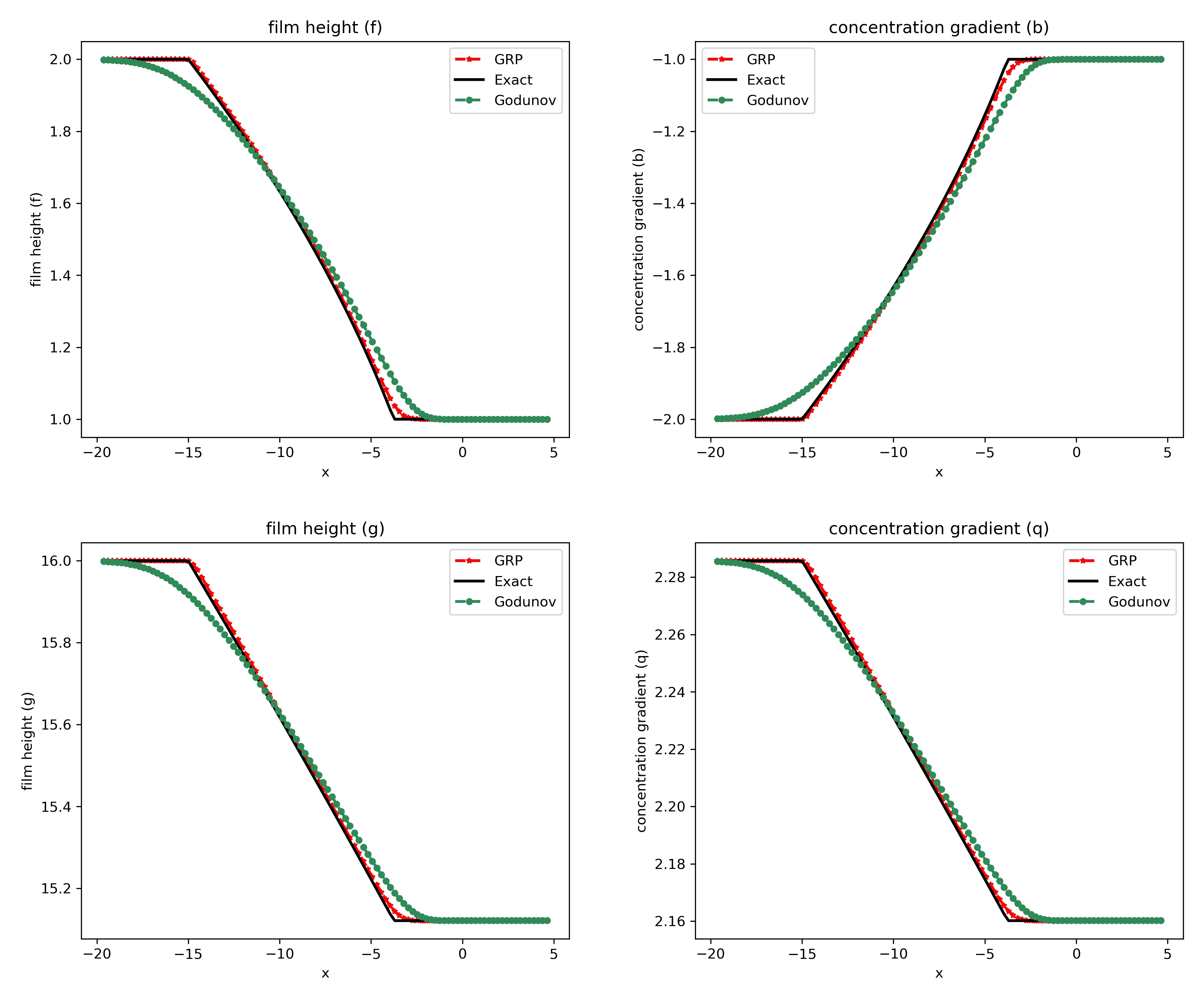}
    \caption{Comparison of the GRP method with the Godunov method for Example \ref{pure_rarefaction} at $t=2.5$ with $\rm{CFL}=0.4$ and $N=100 ~(\Delta x=0.25)$.}
    \label{fig:rarefaction}
\end{figure}
This case is considered to test the pure rarefaction wave, which is similar to \cite{munkejord2006multi}. We consider the Riemann data initially
\begin{align}
    \mathbf{U}(x, 0)=\begin{cases}
        (2.0, -2.0, 16.00, 2.286), \qquad -20\leq x<0,\\
        (1.00, -1.00, 4.00, 0.57143), \qquad 0<x\leq 5.
    \end{cases}
\end{align}
For this case, all the elementary waves collapse into one single 1-rarefaction wave. The computational domain is $[-20, 5]$. In Figure \ref{fig:rarefaction}, the numerical solution obtained by the GRP method at time $t=2.5$ with $N=100$ cells are plotted. It is compared with the results obtained by the Godunov method. It can be observed that the resolution of the numerical solution is much better with the GRP method.
\end{example}
\begin{example}{(The Riemann problem)}\label{example:7.3}
\rm
In this case, we test the GRP method with the Riemann initial data
\begin{align}
    \mathbf{U}(x, 0)=\begin{cases}
        ( 1.57, -1.15, 2.5, 1.90), \quad -10\leq x\leq 10\\
        (1.9, -0.58, 2.4, 2.30), \qquad 10<x\leq 40.
    \end{cases}
\end{align}
The final time is set to be $t=3.5$. The solution to the Riemann problem in this case consists of two rarefaction waves separated by two contact discontinuities. We use $200$ cells in the simulations and compare the results obtained by the GRP method with the MUSCL-RK2 method. We plot the solutions for this case in Figure \ref{fig:R+R}. One can see that the GRP method performs much better in the resolution of both the continuous waves and the discontinuity, and that the MUSCL scheme is found to be more diffusive than the GRP method.

\begin{figure}
    \centering
    \includegraphics[width=\linewidth]{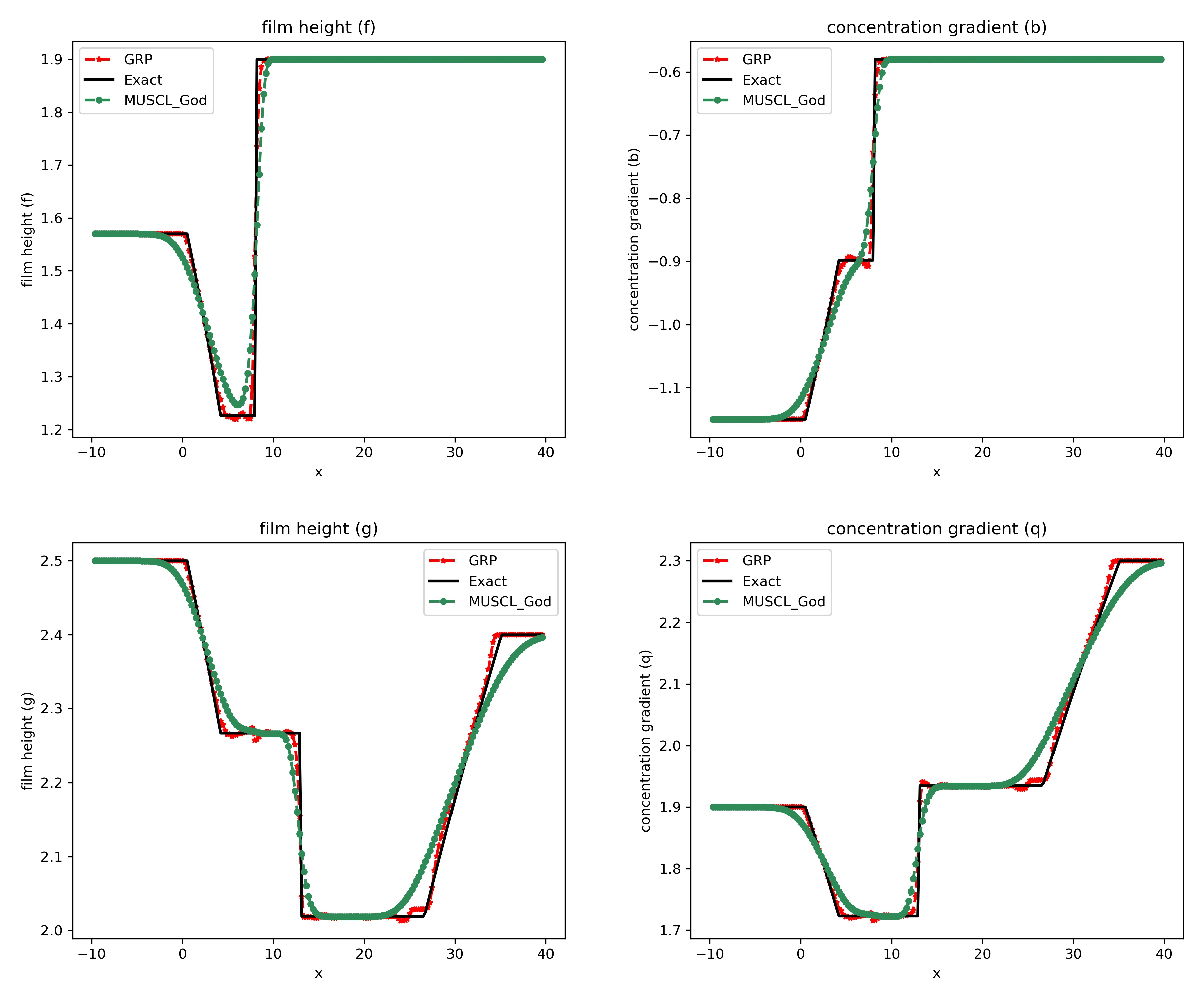}
    \caption{Comparison of the GRP method and the MUSCL-RK2 scheme for Example \ref{example:7.3}} at $t=3.50$ with $\rm{CFL}=0.4$ and $N=200~(\Delta x=0.25)$.
    \label{fig:R+R}
\end{figure}
\end{example}
\begin{figure}
    \centering
    \begin{subfigure}[b]{0.45\linewidth}
        \includegraphics[width={\linewidth}]{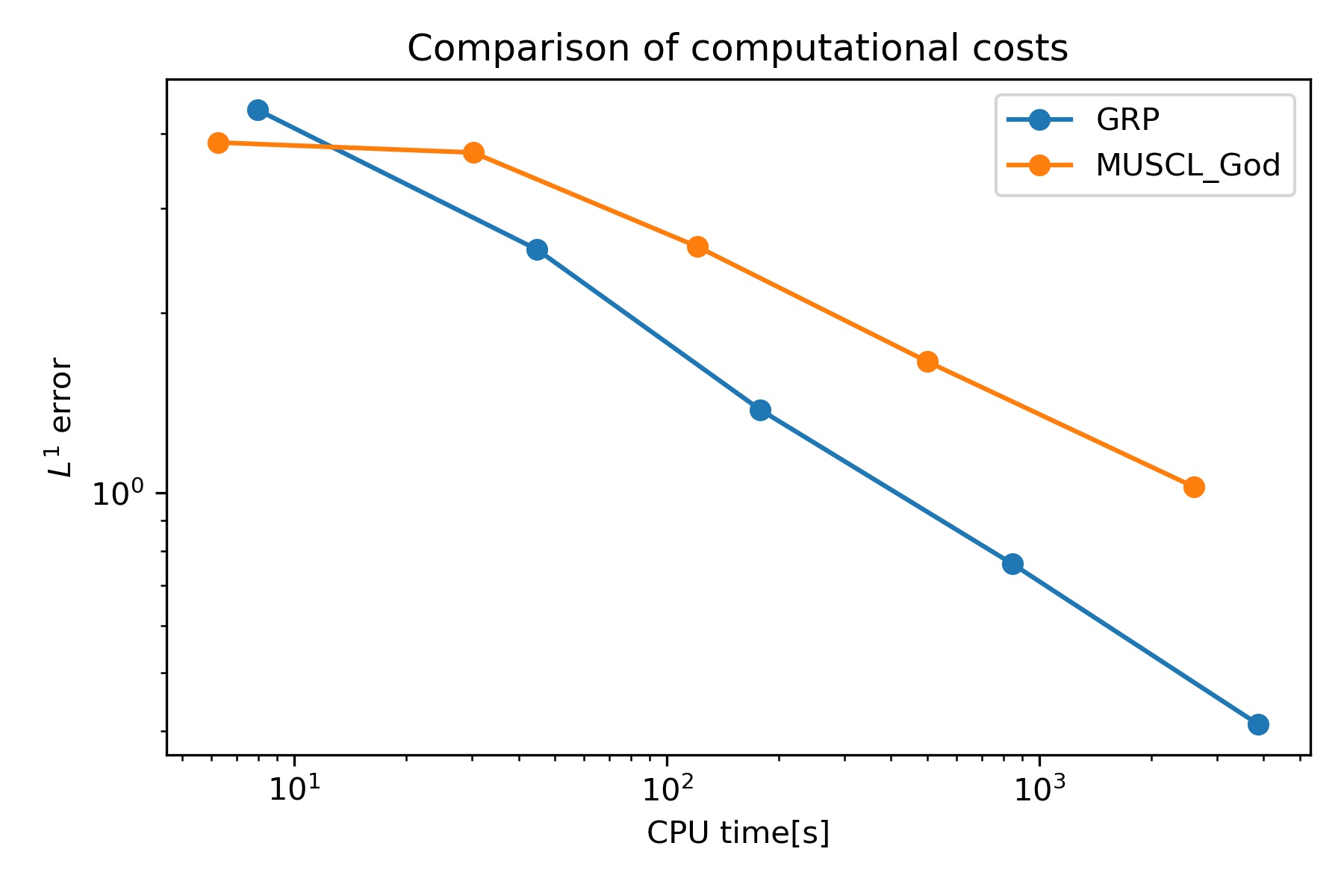}
    \caption{Comparison of computational times for Example \ref{large_ratio}.}
    \label{CPU_Example5.4}
    \end{subfigure}
    \hfill%
    \begin{subfigure}[b]{0.45\linewidth}
        \includegraphics[width={\linewidth}]{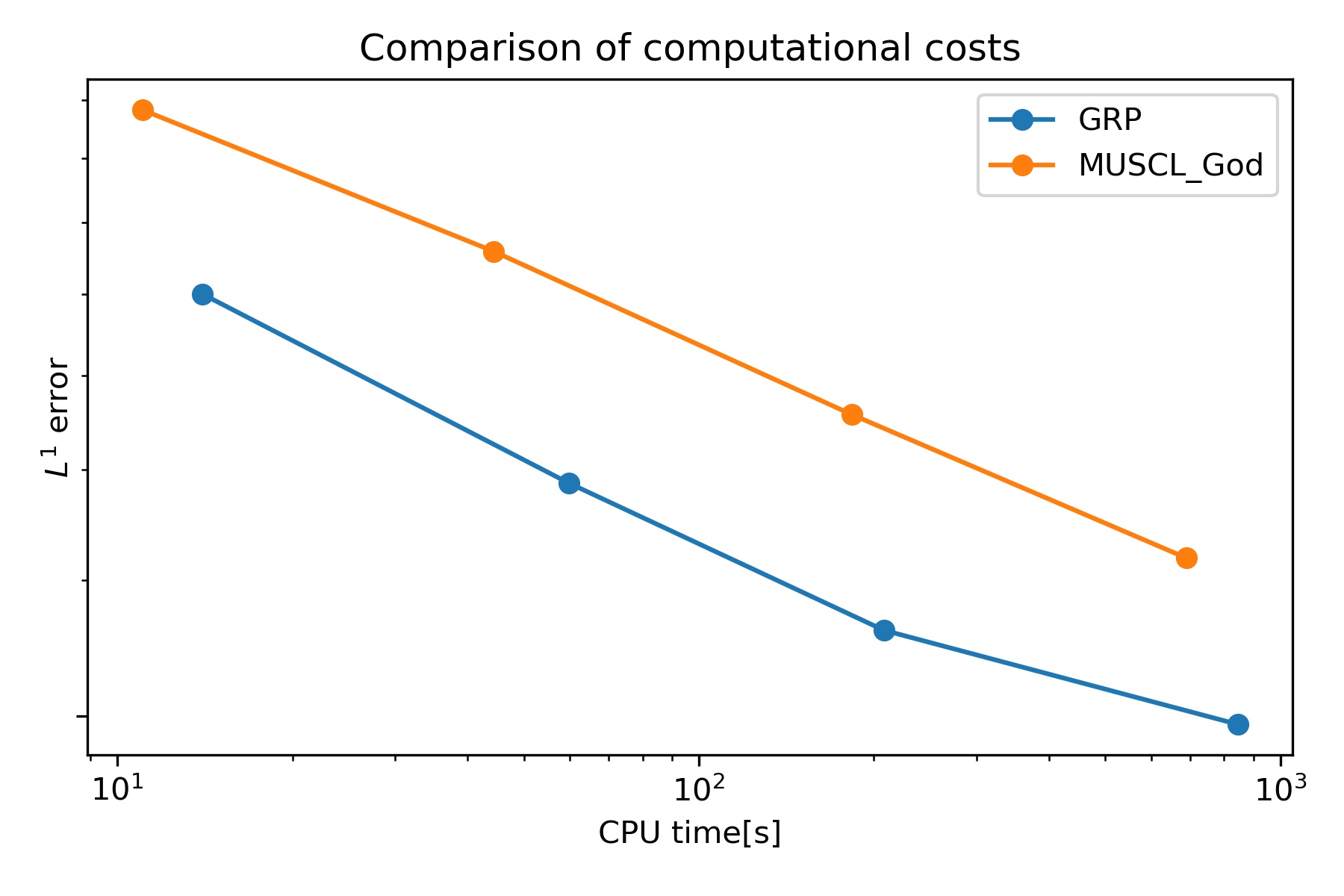}
    \caption{Comparison of computational times for Example \ref{shock-tube}.}
    \label{CPU_Example5.5}
    \end{subfigure}
    
\end{figure}
\begin{example}{(Large height ratio problem)}\label{large_ratio}
Similar to the large density ratio problem for compressible Euler equations \cite{tang2006note}, here we consider initial data where the ratio of film height is quite large. This test case is used to test the ability of the method to capture a strong 1-rarefaction wave as well as the 4-shock wave location. Here we consider the Riemann data:
\begin{align}
    \mathbf{U}(x, 0)=\begin{cases}
        (1.0, -1.5, 2.2, 1.3), \quad -15\leq x<0,\\
        (0.125, -1.5, 0.9, 0.9), \quad 0<x\leq 10.
    \end{cases}
\end{align}
In this case, the computational domain is $[-15, 10]$ and the final time is $t=5.0$. Due to the large jump in the film heights, a strong rarefaction is produced, followed by discontinuities. We plot the obtained results in Figure \ref{fig:large} with $100$ cells and compare them with the solutions of the GRP method with the MUSCL-RK-2 scheme. It can be observed from the numerical results that the GRP method captures the strong rarefaction wave and the shock location better than the MUSCL-RK2 scheme. This shows that GRP has less numerical dissipation and a remarkable ability to capture the locations of discontinuities. To display the computational advantage of the GRP scheme over the MUSCL scheme with RK-2 time stepping, we plot the $L^1$-error vs CPU time in \ref{CPU_Example5.4}. One can observe that to attain the same level of accuracy, GRP takes much less computational time as compared to the MUSCL scheme.
\begin{figure}
    \centering
\includegraphics[width=\linewidth]{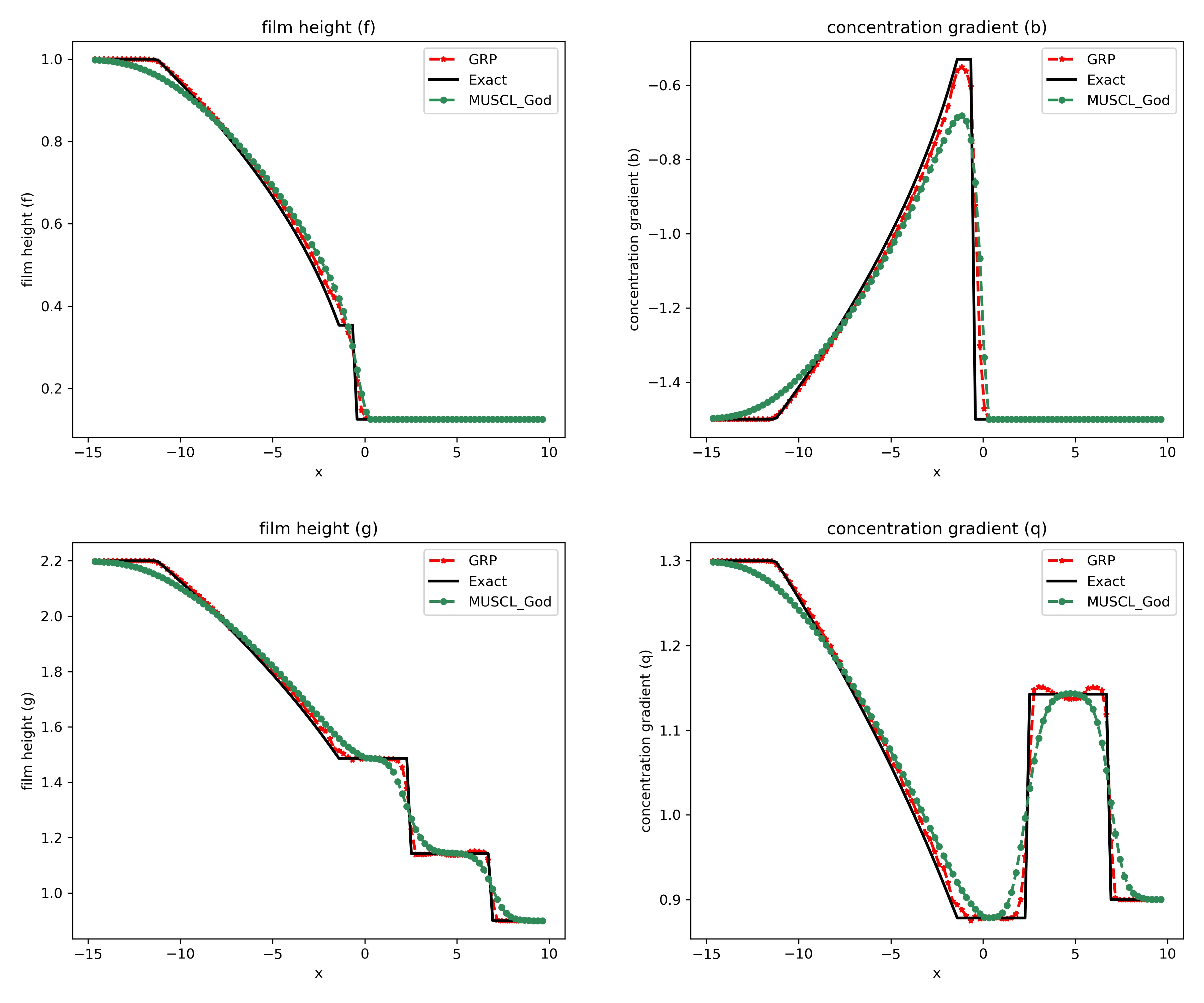}
    \caption{Comparison of the GRP method with the MUSCL-RK2 scheme for Example \ref{large_ratio} with $N=100 (\Delta x=0.25)$ and $\rm{CFL}= 0.4$ at time $t=5.0$.}
    \label{fig:large}
\end{figure}
\end{example}
\begin{example}{(Shock tube type problem)}\label{shock-tube}
We check the performance of the GRP method to capture discontinuities. For this case, we choose initial data of the form 
\begin{align}
    \mathbf{U}(x, 0)=\begin{cases}
        (1.57, -0.95 , 3.1, 1.50), \quad -10\leq x<0,\\
        (1.45, -1.18, 3.6, 1.10), \quad 0<x\leq 15.
    \end{cases}
\end{align}
The final time of simulation is $t=2.5$ and the numerical results are plotted in Figure \ref{fig:shock} using $N=100$ cells. It can be observed again that GRP captures the discontinuities much better than the MUSCL scheme, which validates the accuracy of GRP compared to its second-order counterpart. We also plot the $L^1$-error vs CPU time in Figure \ref{CPU_Example5.5}, which displays the efficiency of GRP scheme once again.
\begin{figure}
    \centering
\includegraphics[width=\linewidth]{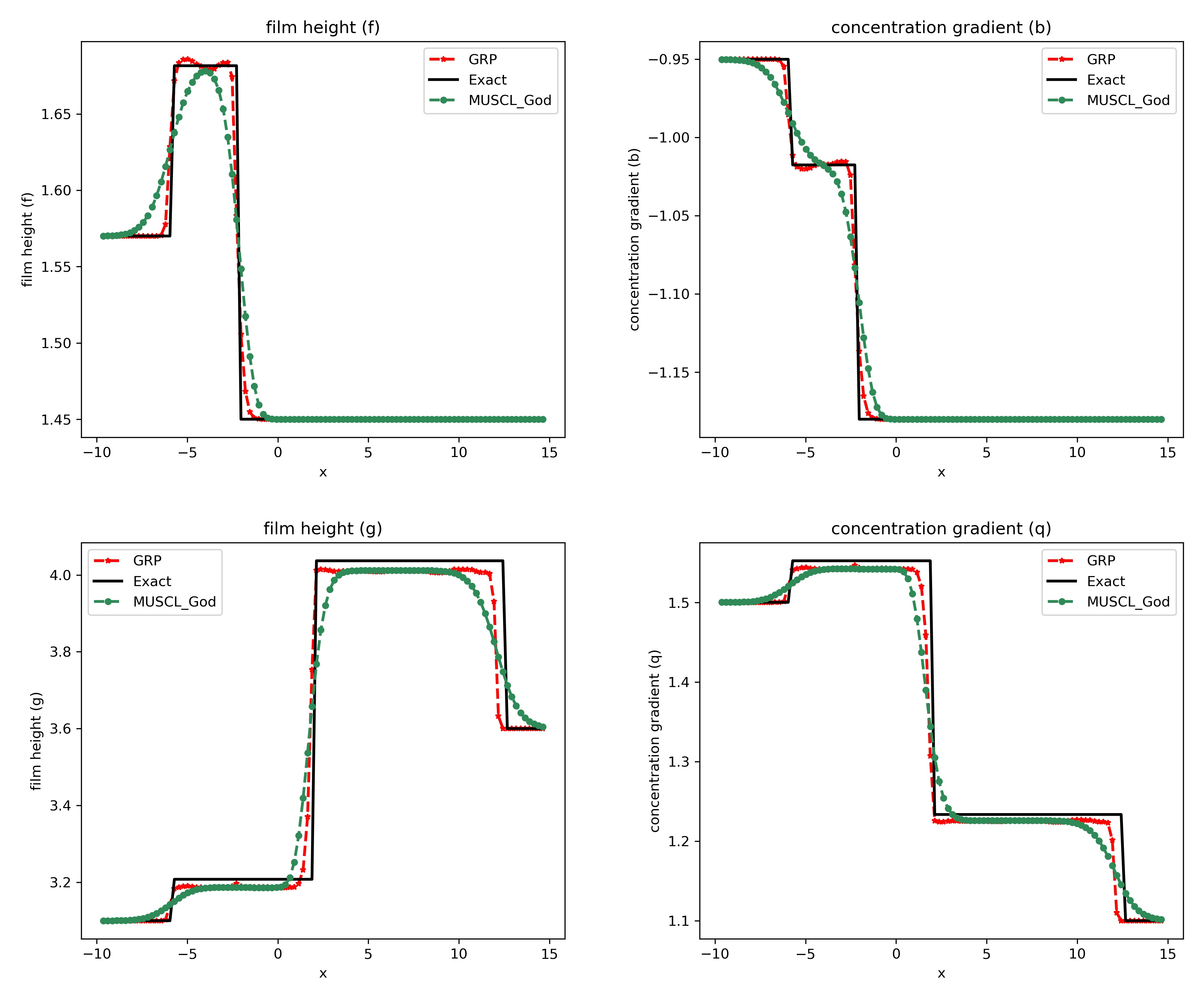}
    \caption{Comparison of the GRP method with the second-order MUSCL scheme with RK-2 time stepping for the shock-tube problem at $t=2.5$ with $N=100 (\Delta x=0.25)$ and $\rm{CFL}=0.4$.}
    \label{fig:shock}
\end{figure}
\end{example}
\begin{example}[Effect of concentration gradients on film heights]
To understand the Marangoni effect and the evolution of film heights due to the presence of solute particles, we assume that the film heights are initially kept at a constant level, while the concentration gradients in each layer follow a Gaussian profile, i.e. we consider the initial data
\begin{align}
    f(x, 0)=1, \, b(x, 0)=-1-\exp(-(x-4)^2), \, g(x, 0)=2, \, q(x, 0)=2+\exp(-(x-4)^2).
\end{align}
The computational domain is chosen to be $[-20, 40]$ and the times of simulations are $t=1.0, 2.0, 3.0, 4.0$ and $5.0$, respectively. For this choice of initial data, we don't have an exact solution available. We plot the solution profile using the proposed scheme with $N=400$ and $\rm{CFL}=0.4$ in Figure \ref{fig:snapshots}. It can be observed from Figure \ref{fig:snapshots} that discontinuities evolve in both film heights. In the first layer, the discontinuities move towards the left due to the Marangoni effect of interfacial stress and remain unaffected by the surface tension variation, which resonates with the evolution of $f$ being independent of $g$ and $q$. However, the discontinuities in the film height of the second layer split into two major discontinuities, out of which one moves to the left and one moves to the right due to the competing Marangoni effects in the second layer.
\begin{figure}
    \centering
    \includegraphics[width=\linewidth]{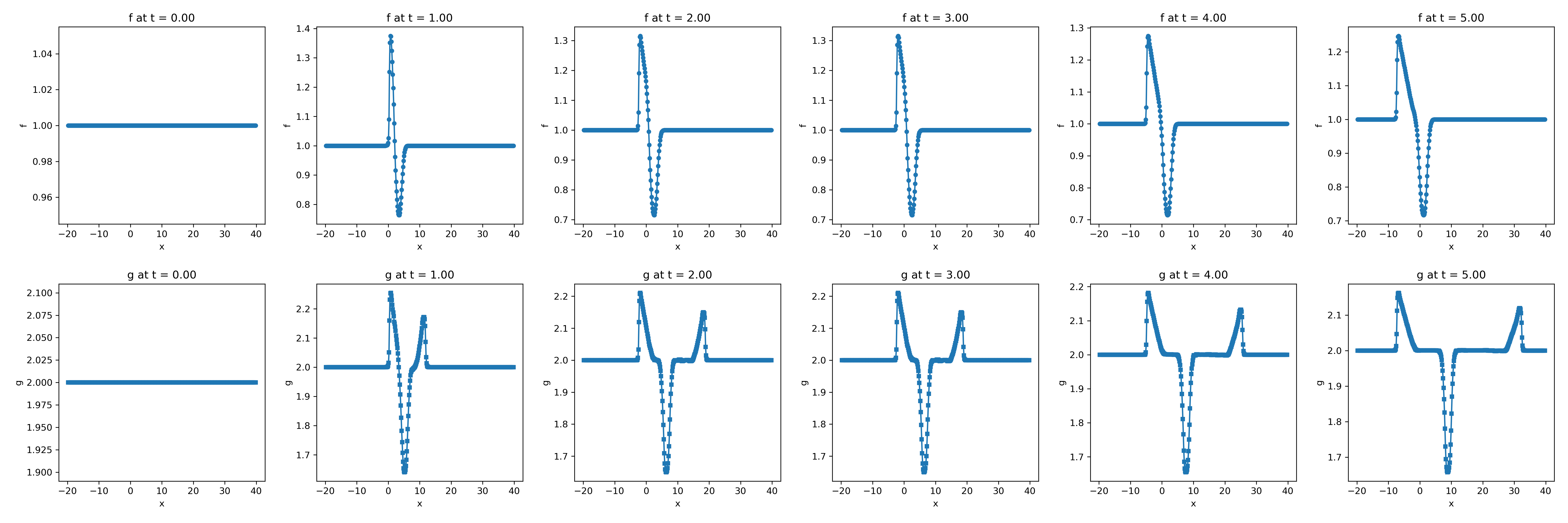}
    \caption{Evolution of film heights  $f$ (top line) and $g$  for a Gaussian concentration gradient profile.}
    \label{fig:snapshots}
\end{figure}
\end{example}
\section{Conclusions and future scope}\label{sec: 6}
We developed a second-order GRP solver for the hyperbolic system of conservation laws \eqref{eq: Main_system}, which governs two-layer thin film flows. 
The explicit time derivatives of the conservative variables are derived by solving a system of linear equations for each possible wave configuration in the generalized Riemann problem so that the resulting scheme is temporal-spatial coupling second order. 
Numerical simulations indicate that the GRP method performs better than, e.g.,  the MUSCL scheme with RK-2 time stepping, in capturing continuous and discontinuous solutions of the hyperbolic system \eqref{eq: Main_system}. 
Since the GRP method provides a set of linear equations to compute time derivatives, the computational cost is comparable to that of other approaches.

Extending GRP solvers to obtain a stable second-order or higher-order entropy-stable discontinuous Galerkin scheme is a natural next step. Moreover, the use cases of the constructed GRP solver are not limited for the hyperbolic system of conservation law \eqref{eq: Main_system}, but can also be utilized as a tool for the numerical simulation for higher-order evolution equations of thin film flows in one or higher space dimensions.
\appendix
\section{Appendix A: Other possible wave configurations with different state spaces}\label{appendix-B}
In the main part of the paper, we focus on the state space \eqref{statespace}. However, one can develop the GRP solver for \eqref{eq: Main_system} in other state spaces using the same methodology as discussed in Section \ref{sec: 3} and Section \ref{sec: 4}. For the sake of clarity, we discuss other possible state spaces and the corresponding wave configurations in this appendix.
\begin{enumerate}
\item[(i)] Consider the state space
\begin{align}\label{statespace_1}
    \mathcal{U}_1=\{(f, b, g, q)\in \mathbb{R}^4: f, g, q>0, b<0, fb+gq< 0, fb+3gq>0\}.
\end{align}
In this state space, the eigenvalues follow the ordering
  \begin{align}
    \frac 32fb<fb+\frac 12gq<\frac 12fb<fb+\frac 32gq.
\end{align}
Therefore, the wave configuration is (R/S)-J-J-(R/S), but the two contact waves change their roles. In this state space, the Sonic case $(fb+3gq/2=0)$ is possible. However, we don't discuss that here for the sake of brevity.
\item[(ii)] Consider the state space 
 \begin{align}\label{statespace_2}
    \mathcal{U}_2=\{(f, b, g, q)\in \mathbb{R}^4: f, g, q>0, b<0, fb+3gq<0\}.
\end{align}
In this state space, the eigenstructure is
  \begin{align}
    \frac 32fb<fb+\frac 12gq<fb+\frac 32gq<\frac 12fb.
\end{align}
Therefore, the wave configuration is (R/S)-J-(R/S)-J, and the construction of the GRP solver is similar. 
\item[(iii)] Consider the state space
 \begin{align}\label{statespace_3}
    \mathcal{U}_3=\{(f, b, g, q)\in \mathbb{R}^4:  f, g, q, b>0, fb<gq.\}.
\end{align}
The eigenstructure for this case is  \begin{align}
    \frac 12fb<\frac 32fb<fb+\frac 12gq<fb+\frac 32gq,
\end{align}
and the wave configuration is J-(R/S)-J-(R/S).
\item[(iv)] Now consider the state space
 \begin{align}\label{statespace_4}
    \mathcal{U}_4=\{(f, b, g, q)\in \mathbb{R}^4:  f, g, q, b>0, 3gq>fb>gq\}.
\end{align} 
The ordering is
  \begin{align}
    \frac 12fb<fb+\frac 12gq<\frac 32fb<fb+\frac 32gq.
\end{align}
The wave configuration is J-J-(R/S)-(R/S).
\item[(v)] Considering 
 \begin{align}\label{statespace_5}
    \mathcal{U}_5=\{(f, b, g, q)\in \mathbb{R}^4:   f, g, q, b>0, 3gq<fb\},
\end{align}
we have
  \begin{align}
    \frac 12fb<fb+\frac 12gq<fb+\frac 32gq<\frac 32fb.
\end{align}
Thus, the wave configuration is also J-J-(R/S)-(R/S). However, the third and fourth nonlinear waves change their roles.
\end{enumerate}

\section*{Acknowledgments}
This work was financially supported by the German Research Foundation (DFG), within the Priority Programme - SPP 2410 Hyperbolic Balance Laws in Fluid Mechanics: Complexity, Scales, Randomness (CoScaRa), the Sino-German Center on Advanced Numerical Methods for Nonlinear Hyperbolic Balance Laws and Their Applications (under the project number GZ1465),  the Natural Science Foundation of China (NSFC) (Nos. 12371391) and the Funding of National Key Laboratory of Computational Physics.
%
%
%
%
\bibliographystyle{plain}
\bibliography{references}
%
%

\end{document}